\documentclass[11pt]{amsart}

\usepackage{rotating,diagrams,mathabx,epsfig,amsthm,amssymb,color}
 \usepackage{tikz}\usetikzlibrary{matrix,arrows}
\usepackage{amsfonts,amsmath}
\newtheorem{theorem}{Theorem}[section]
\newtheorem{prop}[theorem]{Proposition}
\newtheorem{lemma}[theorem]{Lemma}

\newtheorem{cor}[theorem]{Corollary}
\newtheorem{remark}[theorem]{Remark}

\newtheorem*{prop*}{Proposition}

\newtheorem{thm}{Theorem}
\newtheorem{corollary}[thm]{Corollary}
\newtheorem{question}[thm]{Question}

\newcommand{\GGG} [1] {{\boldsymbol \gamma}^{ #1}}
\newcommand{\GGGt} [1] {{\boldsymbol {\tilde{\gamma}}}^{ #1}}

\newcommand{\Q}{\mathbb{Q}}
\newcommand{\MCG}{\mathrm{Aut}}
\newcommand{\slope} [1] {\sigma_{#1}}
\newcommand{\AAA}{\boldsymbol \alpha}
\newcommand{\Ta}{\mathbb{T}_\alpha}
\newcommand{\Tb}{\mathbb{T}_\beta}
\newcommand{\Tg}[1]{\mathbb{T}_{\GGG{#1}}}
\newcommand{\s}{\mathfrak{s}}

\newcommand{\kk}{\mathfrak{k}}

\newcommand{\im}{\mbox{Im}}

\newcommand{\spinc}{{\mbox{spin$^c$} }}
\newcommand{\OS}{{Ozsv\'ath-Szab\'o} }
\newcommand{\OnS}{{Ozsv\'ath and Szab\'o}}
\newcommand{\zee}{\mathbb{Z}}

\newcommand{\cue}{\mathbb{Q}}
\newcommand{\cee}{\mathbb{C}}

\newcommand{\RR}{\mathcal{R}}
\newcommand{\R}{\mathbb{R}}

\newcommand{\J}{{\mathcal J}}

\newcommand{\M}{{\mathcal M}}
\newcommand{\N}{{\mathcal N}}

\newcommand{\F}{{\mathbb F}}
\newcommand{\Z}{{\mathbb Z}}

\newcommand{\CFmct}{\underline{\bf CF}^-}
\newcommand{\CFmc}{{\bf CF}^-}
\newcommand{\HFmct}{\underline{\bf HF}^-}
\newcommand{\HFmc}{{\bf HF}^-}

\newcommand{\ts}{\textstyle}
\newcommand{\ul}{\underline}

\newcommand{\tgr}{\widetilde{gr}}

\newcommand{\coker}{\mbox{\rm coker}}
\newcommand{\x}{\mbox{\bf x}}
\newcommand{\rrr}{\mbox{\bf r}}
\newcommand{\y}{\mbox{\bf y}}
\newcommand{\U}{\mbox{\bf u}}
\newcommand{\V}{\mbox{\bf v}}

\renewcommand{\P}{{\mathcal P}}
\newcommand{\Shat}{\widehat{S}}
\newcommand{\aalpha}{\mbox{\boldmath $\alpha$}}
\newcommand{\bbeta}{\mbox{\boldmath $\beta$}}
\newcommand{\super}[2]{\,{{#1}\hspace{-1.5ex}^{{}^{{#2}}}}\,}

\usepackage{graphicx,amsfonts}

\begin{document}
\title{Floer homology and fractional Dehn twists}
\begin{abstract} We establish a relationship between Heegaard Floer homology and the fractional Dehn twist coefficient of surface automorphisms.  Specifically, we show that the rank of the Heegaard Floer homology of a $3$-manifold bounds the absolute value of the fractional Dehn twist coefficient of the monodromy  of any of its open book decompositions with connected binding.  We prove this by showing that the rank of Floer homology gives bounds for the number of boundary parallel right or left Dehn twists   necessary to add to a surface automorphism to guarantee that the associated contact manifold is tight or overtwisted, respectively.  By examining branched double covers, we also show that the rank of the Khovanov homology of a link bounds the fractional Dehn twist coefficient  of its odd-stranded braid representatives.
\end{abstract}
\author{Matthew Hedden}
\author{Thomas E. Mark}
\maketitle

\section{Introduction}

Let $S$ be a compact oriented 2-manifold with a single boundary component, and $\phi$ a homeomorphism of $S$ fixing its boundary pointwise. The fractional Dehn twist coefficient of $\phi$ is a rational number $\tau(\phi)\in \Q$ that depends only on the isotopy class of $\phi$ rel boundary, and can be understood as a measure of the amount of twisting around the boundary effected by $\phi$ compared to a ``canonical''---e.g., pseudo-Anosov---representative of its (free) isotopy class.  More precisely, consider the image of $\phi$ under the natural map $\MCG(S,\partial S)\rightarrow \MCG(S)$ which drops the  requirement that an isotopy fixes the boundary pointwise.  In this latter group, $\phi$ is isotopic to its Nielsen-Thurston representative; that is, there is an isotopy $\Phi:S\times [0,1]\rightarrow S$ such that $\Phi_0=\phi$ and $\Phi_1$ is either periodic, reducible, or pseudo-Anosov\footnote{As in \cite{kazezroberts}, such a map is called reducible only if is not periodic. Moreover, in the reducible case, after an isotopy rel $\partial S$ we get a subsurface of $S$ to which $\phi$ restricts as a map with periodic or pseudo-Anosov representative: we apply the definition of fractional Dehn twist coefficient to the restriction of $\phi$ to that subsurface.}.  Considering the restriction of $\Phi$ to the boundary, we obtain a homeomorphism:
$$  \Phi_\partial: \partial S \times[0,1]  \rightarrow \partial S\times [0,1] $$ 
defined by $\Phi_\partial(x,t)=(\Phi_t(x),t)$.  The fractional Dehn twist coefficient $\tau(\phi)$ can be defined as the winding number of the arc $\Phi(\theta\times[0,1])$ where $\theta\in \partial S$ is a basepoint\footnote{$\tau(\phi)$ can be defined without  Nielsen-Thurston theory  by lifting $\phi$ to the universal cover and using the translation number of an associated action on a line at infinity \cite{Malyutin}.}.  This would appear only to associate a real number to $\phi$, which could depend on the choice of basepoint and isotopy.  The Nielsen-Thurston classification, however, shows that this winding number is a well-defined rational-valued invariant $\tau(\phi)\in \Q$.   The definition extends easily to surfaces with several boundary circles, in which case there is a  corresponding twist coefficient for each component of the boundary. Here we will be concerned only with the case of connected boundary.

The study of fractional Dehn twist coefficients dates at least from the work of Gabai and Oertel \cite{GO} in the context of essential laminations of 3-manifolds, where, with different conventions than those used here, it appeared as the slope of the  ``degenerate curve" \cite[pg. 62]{GO}. Honda, Kazez, and Matic \cite{HKMVeer1,HKMVeer2} observed a connection with contact topology through open book decompositions, which has been explored by various authors \cite{johnjohn,kazezroberts,ItoKawamuro}. The following proposition summarizes a few key properties of the fractional Dehn twist coefficient. 
\begin{prop*}\cite{Malyutin,ItoKawamuro} Let  $\tau: \MCG(S,\partial S)\to \cue$ be the fractional Dehn twist coefficient, and let $t_\partial$ denote the mapping class of a right-handed Dehn twist around a curve parallel to $\partial S$. Then for all $\phi,\psi\in\MCG(S, \partial S)$, we have:
\begin{enumerate} 
\item {\em (Quasimorphism)}  
\quad  $|\tau(\phi\circ\psi) - \tau(\phi) - \tau(\psi)| \leq 1.$

\item {\em (Homogeneity) }\quad \quad $\tau(\phi^n) = n \tau(\phi).$

\item {\em (Boundary Twisting)} \ \ \
$\tau(\phi\circ t_\partial) = \tau(\phi) + 1.$

\end{enumerate}
\end{prop*}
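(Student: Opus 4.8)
The plan is to derive all three properties from one structural fact: the fractional Dehn twist coefficient is the rotation number of a natural circle action of $\MCG(S,\partial S)$. Throughout I may assume $\chi(S)<0$ (if $S=D^2$ there is nothing to prove, and in the reducible case the footnote reduces matters to the periodic or pseudo-Anosov piece). The starting point is the Nielsen--Thurston ordering of $\MCG(S,\partial S)$: fixing a hyperbolic metric on $S$ with geodesic boundary, the universal cover $\widetilde S\subset\mathbb{H}^2$ has a circular ideal boundary on which $\pi_1(S)$ acts, and since each $\phi\in\MCG(S,\partial S)$ fixes $\partial S$ pointwise it has a canonical lift to $\widetilde S$ fixing a chosen lift $\ell_0$ of $\partial S$ pointwise; reading off the action near $\ell_0$ produces a faithful action of $\MCG(S,\partial S)$ on a line and hence a left-invariant order $\preceq$ (this is the construction of Short--Wiest and, for braids, Dehornoy; see also Malyutin). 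The two facts about this order that I will use are: (a) $t_\partial$ is central and \emph{cofinal}, so that the Dehornoy floor $\lfloor\phi\rfloor\in\mathbb{Z}$, defined by $t_\partial^{\,m}\preceq\phi\prec t_\partial^{\,m+1}$, is everywhere finite and satisfies $\lfloor\phi\,t_\partial^{\,k}\rfloor=\lfloor\phi\rfloor+k$ (so in particular $\lfloor t_\partial^{\,k}\rfloor=k$); and (b) monotonicity of the action gives the basic inequality $\lfloor\phi\rfloor+\lfloor\psi\rfloor-1\le\lfloor\phi\psi\rfloor\le\lfloor\phi\rfloor+\lfloor\psi\rfloor+1$. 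Because $t_\partial$ is a central cofinal element, the dynamical realization of $\preceq$ descends to an action of $\MCG(S,\partial S)$ on the circle $\mathbb{R}/\langle t_\partial\rangle$ and thus to an embedding $\MCG(S,\partial S)\hookrightarrow\widetilde{\mathrm{Homeo}}_+(S^1)$ carrying $t_\partial$ to the central translation $T\colon x\mapsto x+1$.

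The heart of the proof, and the step I expect to be the main obstacle, is the identification
\[
\tau(\phi)\;=\;\mathrm{rot}(\phi)\;=\;\lim_{k\to\infty}\frac{\lfloor\phi^{\,k}\rfloor}{k},
\]
the limit existing by the subadditivity in (b). One direction — that $\mathrm{rot}(\phi)=\lim_k\lfloor\phi^{\,k}\rfloor/k$ — is formal from the definitions of $\mathrm{rot}$ and of the floor. The content is matching this with the winding-number definition of $\tau(\phi)$: one isotopes $\phi$ to its Nielsen--Thurston representative $h$, puts $h$ into standard form near $\partial S$ (a rigid rotation in the periodic case; in the pseudo-Anosov case, $p$ boundary prongs cyclically permuted by $q$ steps), and checks that the winding number of the arc $\Phi_\partial(\theta\times[0,1])$ equals an integer coming from the isotopy plus $q/p$ (resp. the rotation angle), which is exactly the rotation number of the induced circle homeomorphism. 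In the periodic case this can be organized around the relation $\phi^{\,p}=t_\partial^{\,m}$, whence $\tau(\phi)=m/p$ follows immediately from $\lfloor t_\partial^{\,k}\rfloor=k$; the pseudo-Anosov case is the corresponding relative statement on a collar of $\partial S$. The delicate points are the Nielsen--Thurston normal form at the boundary and fixing orientation conventions so that the \emph{right}-handed twist $t_\partial$ corresponds to $T$ (the $+1$ shift) rather than its inverse.

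Granting this identification, properties (1)--(3) are precisely the classical properties of the translation number $\mathrm{rot}\colon\widetilde{\mathrm{Homeo}}_+(S^1)\to\mathbb{R}$: it is a homogeneous quasimorphism of defect at most $1$, which gives (1); it is homogeneous, $\mathrm{rot}(\phi^{\,n})=n\,\mathrm{rot}(\phi)$, which gives (2); and $\mathrm{rot}(T\cdot\phi)=\mathrm{rot}(\phi)+1$ since $T$ is central with $\mathrm{rot}(T)=1$, which gives (3) as $t_\partial\mapsto T$. It is worth noting that (2) and (3) also have quick self-contained proofs directly from the winding-number definition, independent of the circle action. For (2): the $n$-fold diagonal $\Phi^{(n)}_s:=\Phi_s^{\,\circ n}$ is an isotopy from $\phi^{\,n}$ to the Nielsen--Thurston representative $h^{\,n}$ of $\phi^{\,n}$ (powers of periodic, resp. pseudo-Anosov, maps remain periodic, resp. pseudo-Anosov, with the same peripheral behavior), and, after isotoping $h$ so that $h|_{\partial S}$ is a rigid rotation, the continuous lift of its boundary arc displays $n$ times the winding number. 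For (3): precompose $\Phi$ with the collar-spinning isotopy that realizes $t_\partial\simeq\mathrm{id}$ in $\MCG(S)$ by rotating $\partial S$ once around; tracking the continuous lift of the boundary restriction shows the arc picks up exactly one extra unit of winding. Only the quasimorphism inequality (1) genuinely needs the ordering/floor (equivalently, the circle action) of the first paragraph, and essentially all of the work lies in establishing that package and the identification of $\tau$ with $\mathrm{rot}$.
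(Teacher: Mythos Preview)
The paper does not supply its own proof of this proposition: it is stated with the citation \cite{Malyutin,ItoKawamuro} and used as a black box thereafter. So there is nothing in the paper to compare your argument against.

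That said, your proposal is essentially the argument one finds in the cited sources, particularly Malyutin. The identification of $\tau$ with the translation number of the natural lift $\MCG(S,\partial S)\hookrightarrow\widetilde{\mathrm{Homeo}}_+(S^1)$ is exactly the content of Malyutin's construction (for braids) and its extension to general surfaces; once that is in place, (1)--(3) are indeed just the standard properties of Poincar\'e's translation number. Your outline of the identification via Nielsen--Thurston normal forms at the boundary is correct in spirit, and you are right that the orientation convention matching $t_\partial$ to the $+1$ shift is where sign errors creep in. The only place I would ask for more care is the passage from the left-invariant order to an embedding into $\widetilde{\mathrm{Homeo}}_+(S^1)$: the dynamical realization of an order gives an action on $\mathbb{R}$, but to conclude that $t_\partial$ acts by exactly $+1$ (rather than some positive translation) one either normalizes or, more cleanly, works directly with the Dehornoy floor and defines $\mathrm{rot}(\phi)=\lim_k\lfloor\phi^k\rfloor/k$ as you do in the displayed equation. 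With that adjustment the argument is sound.
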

The first two properties easily imply that the fractional Dehn twist is invariant under conjugation (see e.g., \cite[Proposition 5.3]{Ghys}), and the third 
implies that it can be arbitrarily large, either positively or negatively. There are constraints, however, on the possible denominators of $\tau(\phi)$ based on the topology of $S$; c.f. \cite[Theorem 8.8]{Gabai5}, \cite[Theorem 4.4]{kazezroberts}, \cite{Roberts2001b}.

Surface homeomorphisms of the sort we consider arise naturally as monodromies of fibered knots in 3-manifolds or, equivalently, open book decompositions of 3-manifolds with connected binding. Indeed, if $K\subset Y$ is a fibered knot then the complement of a neighborhood of $K$ is a bundle over $S^1$ with fiber a compact surface $S$ with one boundary component. This bundle is described by a monodromy homeomorphism   $\phi_K: S\rightarrow S$ that is the identity on the boundary and well-defined up to isotopy and conjugation. Hence we can think of the twist coefficient as giving rise to an invariant of fibered knots in 3-manifolds, $K \mapsto \tau(\phi_K)$, where we suppress the  choice of fibration from our notation. Our main result shows that if the 3-manifold is fixed, then there is an {\it a priori} bound on the value of the twist coefficient for any fibered knot in that manifold.

\begin{thm}\label{thm1} Let $Y$ be a closed oriented 3-manifold. Then there exists an integer $N_Y\geq 0$ with the following property: Let $K$ be any fibered knot in $Y$ and let $\phi_K$ denote its monodromy. Then 
\[
|\tau(\phi_K)| \leq N_Y + 1.
\]
\end{thm}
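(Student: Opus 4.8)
\emph{Proof idea.} The plan is to pass to open book decompositions and play the \OS contact invariant against surgery exact sequences. First, two reductions. (1) If $K\subset Y$ is fibered with page $S$, binding $B=K$, and monodromy $\phi$, then for every $m\in\Z$ the map $t_\partial^{m}\circ\phi$ is again the monodromy of an open book with connected binding, presenting the $3$-manifold obtained from $Y$ by a surgery on $B$ along a slope determined by $m$ and the page framing; Boundary Twisting gives $\tau(t_\partial^{m}\circ\phi)=\tau(\phi)+m$. (2) The mirror $\bar K\subset -Y$ is fibered with monodromy $\phi^{-1}$, and Quasimorphism together with Homogeneity forces $\tau(\phi^{-1})=-\tau(\phi)$; hence an upper bound for $\tau$ of fibered knots in $-Y$ yields a lower bound for $\tau(\phi_K)$. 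So it suffices to bound $\tau(\phi)$ from above in terms of $Y$.

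The contact-topological ingredients are: (i) $(S,\phi)$ supports a contact structure $\xi_\phi$ on $Y$, carrying the contact class $c(\xi_\phi)\in\widehat{HF}(-Y)$; if $\tau(\phi)<0$ then $\phi$ is not right-veering, so $\xi_\phi$ is overtwisted and $c(\xi_\phi)=0$ [Honda--Kazez--Mati\'c; \OnS]. (ii) If $\tau(\phi)\ge 1$ then $c(\xi_\phi)\ne 0$, a strengthening of the Honda--Kazez--Mati\'c tightness theorem. (iii) Adjoining a right-handed boundary twist is a Legendrian (hence Stein) surgery on the binding, so the passage $(S,\psi)\mapsto(S,t_\partial\psi)$ is realized by a Stein cobordism; Stein-cobordism naturality then shows the induced map carries $c(\xi_{t_\partial\psi})$ back to $c(\xi_\psi)$, and these maps sit inside surgery exact triangles, so their kernels have rank bounded by a Heegaard Floer group attached to $Y$ (and the binding).

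Granting these, I would argue as follows. Iterating (iii), removal of right twists gives a chain of Stein cobordisms whose composite map sends $c(\xi_\phi)$ to $c(\xi_{t_\partial^{-k}\phi})$ for each $k\ge 0$; write $F^{(k)}$ for this map. The kernels $\ker F^{(1)}\subseteq\ker F^{(2)}\subseteq\cdots$ form an increasing chain of subgroups of a fixed finite-rank Heegaard Floer group, hence stabilize after boundedly many --- say at most $N_Y$ --- steps. Now suppose $\tau(\phi)>N_Y$. For every integer $k>\tau(\phi)$ we have $\tau(t_\partial^{-k}\phi)<0$, so by (i) $c(\xi_{t_\partial^{-k}\phi})=0$, i.e. $c(\xi_\phi)\in\ker F^{(k)}$; since such $k$ exceed $N_Y$, stabilization gives $c(\xi_\phi)\in\ker F^{(N_Y)}$, whence $c(\xi_{t_\partial^{-N_Y}\phi})=F^{(N_Y)}(c(\xi_\phi))=0$. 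But $\tau(t_\partial^{-N_Y}\phi)=\tau(\phi)-N_Y\ge 1$, so (ii) forces $c(\xi_{t_\partial^{-N_Y}\phi})\ne 0$ --- a contradiction. Hence $\tau(\phi)\le N_Y$ for all fibered knots in $Y$, and the reduction above completes the proof.

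The places I expect real difficulty are precisely (ii) and the uniformity of the constant in (iii). For (ii): the Honda--Kazez--Mati\'c proof of tightness from $\tau\ge 1$ uses convex-surface theory and yields only \emph{universal} tightness, and a universally tight structure may carry Giroux torsion and hence have vanishing untwisted contact class; I would expect to need either the twisted contact invariant (together with a statement that $\tau\ge 1$ forces weak semifillability in the connected-binding case) or a replacement of the dichotomy ``$c=0$ versus $c\ne0$'' by a finer $U$-divisibility (algebraic-torsion order) invariant of the contact class in $HF^+$ whose value decreases predictably as left twists are added. For (iii): the surgery triad realizing the Stein cobordism naively has a \emph{third} term equal to a surgery on the binding $B$, whose rank is not obviously controlled by $Y$ alone; making $N_Y$ depend only on $Y$ --- presumably $N_Y=\operatorname{rk}\widehat{HF}(Y)$ up to a constant --- is where one must work, likely by exploiting that $B$ is null-homologous (it bounds the page) and invoking the Ozsv\'ath--Szab\'o mapping-cone formula for $1/m$-surgeries rather than a single exact triangle. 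Everything else --- the reductions, the Stein cobordisms, and the stabilization of an increasing chain of kernels inside a finite-rank group --- is routine.
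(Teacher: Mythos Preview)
Your reductions and the use of naturality of $c(\xi)$ under Stein cobordisms are fine, but the core stabilization step does not work as stated. An increasing chain $\ker F^{(1)}\subseteq\ker F^{(2)}\subseteq\cdots$ inside a $d$-dimensional vector space can have at most $d$ strict jumps, but the \emph{location} of the last jump is not bounded by $d$: nothing rules out $\ker F^{(k)}=0$ for all $k<10^{6}$ followed by $\ker F^{(10^{6})}\neq 0$. Since the individual maps $G_k$ depend on the knot $K$, so does the step at which the chain stabilizes; your $N_Y$ is secretly $N_{Y,K}$. This is precisely the uniformity problem you anticipate under (iii), and it is not cured by the observation that all the kernels live in the fixed group $\widehat{HF}(-Y)$. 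Ingredient (ii) is a separate genuine difficulty (as you note, $\tau\ge1$ yields a taut foliation and hence only weak fillability, so only the \emph{twisted} contact class is forced to be nonzero), but even granting a twisted version of (ii) the argument does not close.

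The paper sidesteps both issues by replacing your iterated cobordisms with a \emph{single} exact triangle with twisted coefficients,
\[
HF^+(-Y_{-1/n}(K))\longrightarrow HF^+(-Y)\xrightarrow{\ G\ }\underline{HF}^+(-Y_0(K);\F[C_n])\xrightarrow{\ F\ }HF^+(-Y_{-1/n}(K)).
\]
The key computation is that the summand $\underline{HF}^+(-Y_0(K),\s_{1-g};\F[C_n])$ is exactly $n$-dimensional over $\F$ with trivial $U$-action, and that the image of $F$ on this summand is the contact submodule of $\xi_n$. By $U$-equivariance, the component of $G$ landing in this summand factors through $\coker\big(U\colon HF^+(-Y)\to HF^+(-Y)\big)$, whose dimension is at most $N_Y=\tfrac12(\dim_\F\widehat{HF}(Y)-|\mathrm{Tor}\,H_1(Y)|)$, a quantity depending on $Y$ alone. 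Thus for $n>N_Y$ exactness forces $F\neq 0$ there, so $c(\xi_n)\neq 0$ and $\xi_n$ is tight; Honda--Kazez--Mati\'c then give $\tau(\phi)+n\ge 0$, and orientation reversal yields the other inequality. No stabilization argument and no appeal to your (ii) are needed: uniformity in $K$ comes for free because the only object one must bound is $\coker U$ on $HF^+(-Y)$.
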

In the case that a knot fibers in many distinct ways, the bound is to be interpreted as stated: regardless of the choice of fiber, the twist coefficient of the resulting monodromy is bounded by a number depending only on $Y$.  To the best of our knowledge, the only situation prior to our theorem in which such a bound was known is for knots in the 3-sphere, in which case work of Gabai \cite{Gabai5} and Kazez-Roberts \cite{kazezroberts} shows that $|\tau(\phi_K)|\le 1/2$.  Their proof relies on the application of thin position, among other things, and  does not extend to other manifolds in an obvious way.  Our proof exploits the connection between twist coefficients and contact topology, and a connection between contact topology and Heegaard Floer homology. Recall that by a construction of Thurston-Winkelnkemper \cite{Thurston}, a fibered knot $K\subset Y$, regarded as an open book decomposition, uniquely determines a contact structure $\xi_K$ on $Y$  (see \cite{Torisu} for uniqueness). It was shown by Honda, Kazez, and Matic that if $\xi_K$ is tight, then $\tau(\phi_K)\geq 0$ \cite[Theorem 1.1 and Propositions 3.1, 3.2]{HKMVeer1}. Using property $(3)$ of $\tau$, we see that to obtain a lower bound on $\tau(\phi_K)$ it suffices to show that there is a number $N_Y$ such that  the monodromy $\phi_K\circ t_\partial^n$ describes a tight contact structure (on a different 3-manifold) for any $n>N_Y$. Therefore Theorem \ref{thm1} is implied by the following.

\begin{thm}\label{thm2} For a closed oriented 3-manifold $Y$, there is an integer $N_Y\geq 0$ with the following property: Let $\xi$ be a contact structure on $Y$, and choose any open book decomposition $(S,\phi)$   that supports $\xi$ and has connected binding. Then for any $n>N_Y$, the open book   $(S,\phi\circ t^n_{\partial S})$ determines a tight contact structure.
\end{thm}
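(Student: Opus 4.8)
The plan is to prove the equivalent statement that the \OS contact invariant of $(S,\phi\circ t_{\partial S}^{\,n})$ is nonzero once $n$ is large; since a nonzero contact invariant forces tightness, this suffices. Write $(Y_n,\xi_n)$ for the contact manifold supported by $(S,\phi\circ t_{\partial S}^{\,n})$, so $(Y_0,\xi_0)=(Y,\xi)$, and set $c_n:=c(\xi_n)\in\widehat{HF}(-Y_n)$. A curve $c\subset S$ parallel to $\partial S$ is, inside each $Y_n$, a Legendrian pushoff of the binding whose contact framing is the page framing, and adding one more positive boundary twist is exactly Legendrian (contact $(-1)$) surgery along it. Hence there is a Weinstein $2$-handle cobordism $W_n\colon(Y_n,\xi_n)\to(Y_{n+1},\xi_{n+1})$, and by the naturality of the contact invariant under Legendrian surgery (\OnS; Lisca--Stipsicz) the turned-around cobordism $\overline{W_n}\colon-Y_{n+1}\to-Y_n$ induces a map $F_n\colon\widehat{HF}(-Y_{n+1})\to\widehat{HF}(-Y_n)$ with $F_n(c_{n+1})=c_n$. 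In particular, if $c(\xi)\neq0$ then $c_n\neq0$ for every $n$ and $N_Y=0$ works; the real content is the case $c(\xi)=0$, e.g.\ $\xi$ overtwisted.

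For the general case I would first record the surgery picture: $Y_n$ is Dehn surgery on the fixed knot $\gamma\subset Y$ (the binding pushoff), and, adding one boundary twist at a time, the relevant slopes on $\partial N(\gamma)$ are pairwise of distance one, so there are surgery exact triangles
\[
\cdots\longrightarrow\widehat{HF}(-Z)\longrightarrow\widehat{HF}(-Y_{n+1})\xrightarrow{\,F_n\,}\widehat{HF}(-Y_n)\longrightarrow\widehat{HF}(-Z)\longrightarrow\cdots,
\]
where $Z$ is the mapping torus of the capped-off monodromy (a fixed $3$-manifold once the open book is fixed) and the indicated map is the cobordism map $F_n$. The sharper input is \OnS's integer surgery formula: $\widehat{HF}(-Y_n)$ is the homology of a mapping cone $\mathbb X_n$ built from the knot Floer complex of $\gamma\subset Y$ together with finitely many copies of $\widehat{CF}(Y)$. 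Because the binding, hence $\gamma$, is a fibered knot, the top Alexander-graded summand is $\widehat{HFK}(Y,\gamma,g)\cong\F$, and one identifies $c_n$ with the image in $H_*(\mathbb X_n)$ of its generator. The theorem thus reduces to: this generator is not a boundary in $\mathbb X_n$ once $n$ exceeds a bound depending only on $Y$.

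The heart of the matter is this last reduction, and I expect it to be the main obstacle, precisely because it must be uniform: the cone $\mathbb X_n$ grows with $n$ and the fixed manifold $Z$ depends on the page and the binding, so one cannot merely compare ranks. The point to establish is that the generator of $\widehat{HFK}(Y,\gamma,g)$ can be killed in $\mathbb X_n$ only through a sub-complex whose size is controlled by $\operatorname{rk}\widehat{HF}(Y)$ alone. Concretely I would work in $HF^+$, use the $U$-module structure and the structure maps $v_s,h_s$ of the surgery formula to isolate exactly which finite portion of the cone the top generator interacts with, and use that each $W_n$ is a Weinstein handle cobordism to keep track of the absolute grading of $c_n$; combined with the vanishing/nonvanishing dichotomy for the contact invariant and the identity $F_n(c_{n+1})=c_n$ from the first step, this should force $c_n\neq0$ as soon as $n$ exceeds a threshold that one reads off as a function of $\operatorname{rk}\widehat{HF}(Y)$, which is then taken to be $N_Y$. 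The companion statement---that more than $N_Y$ negative boundary twists give an overtwisted structure---follows by applying the argument to the reverse orientation together with the Honda--Kazez--Matic inequality ($\xi$ tight $\Rightarrow\tau\geq0$); the remaining sign and orientation bookkeeping, and the observation that only tightness and not the Nielsen--Thurston type of $\phi$ is at issue, are routine, and one may work over $\F=\Z/2\Z$ throughout.
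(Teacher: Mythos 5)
Your overall strategy is the right one: show that the contact invariant of $\xi_n$ is eventually nonzero, then appeal to the non-vanishing criterion for tightness. Your preliminary observation that each boundary twist is a Legendrian surgery, so that a Weinstein cobordism map sends $c(\xi_{n+1})$ to $c(\xi_n)$, is correct and cleanly dispatches the case $c(\xi)\neq0$. But this is precisely the easy case, and for the remaining case you have in effect flagged the gap rather than filled it.

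The missing ingredient is the mechanism that produces a bound \emph{uniform in $n$}. You propose to use the integer surgery formula, identify $c_n$ with the top Alexander-graded generator inside the mapping cone $\mathbb X_n$, and then argue that this generator ``can be killed only through a sub-complex whose size is controlled by $\operatorname{rk}\widehat{HF}(Y)$.'' That last sentence is exactly the desired conclusion, not an argument for it; you acknowledge it is ``the heart of the matter'' and ``the main obstacle'' and then say ``this should force $c_n\neq 0$.'' As written there is nothing that actually controls how the growing cone $\mathbb X_n$ can interact with the fixed top generator, and the one-step exact triangles relating $Y_n$, $Y_{n+1}$, and $Y_0(K)$ do not by themselves supply a rank that grows linearly in $n$.

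What the paper does instead is use a single surgery exact triangle in which the $n$-dependence is packaged into a \emph{twisted coefficient module}: one gets
\[
HF^+(-Y_{-1/n}(K);\F)\longrightarrow HF^+(-Y;\F)\longrightarrow \underline{HF}^+(-Y_0(K);\F[C_n])\longrightarrow\cdots,
\]
where the bottom term is zero-surgery Floer homology with coefficients in the group algebra of $\Z/n\Z$. The decisive computation (Lemma \ref{fulltwistlemma}) is that in the extremal $\spinc$-structure $\s_{1-g}$ this twisted group is $\F[C_n]\cong\F^n$ with trivial $U$-action, and that the image of the map out of this summand is exactly the (twisted) contact submodule. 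Meanwhile the map \emph{into} this summand factors, by $U$-equivariance, through $\operatorname{coker}U$ on $HF^+(-Y)$, which is finite-dimensional and independent of $n$. Thus for $n$ larger than this cokernel dimension the map out of the $\F^n$ summand must be nonzero, forcing $c(\xi_n)\neq0$. This is the quantitative step your proposal leaves open. Two further points your sketch doesn't address: one must make precise the naturality of the contact class under these twisted-coefficient cobordism maps (the paper phrases this in terms of the contact \emph{submodule}), and the $g=1$ case of the lemma fails as stated and requires passing to Novikov coefficients $\Lambda_\omega$ twisted by a $2$-form dual to the fiber.
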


\noindent  Theorem \ref{thm1} follows from Theorem \ref{thm2} by observing that open book decompositions for $Y$ are in bijection with those for $-Y$ under a correspondence induced by inverting monodromies. Thus $N_Y$ in the first theorem  can be taken as the maximum of $N_Y$ and $N_{-Y}$ from the second.

Theorem \ref{thm2} was first observed by \OnS\ in the case that $Y$ is an $L$-space, in which case $N_Y = 0$ \cite[Theorem 1.6]{Contact}.  Indeed, in that paper they ask the following question:

\begin{question}   \cite[pg. 43]{Contact} Given an open book decomposition $(S, \phi)$ for $Y$, what is the minimum $n$ such that $(S,\phi\circ t_{\partial S}^n)$ specifies a  tight contact structure?
\end{question}

 Theorem \ref{thm2} is proved by a generalization of \OnS's argument, and we obtain  a bound depending on the Heegaard Floer homology of $Y$.  Indeed, 
 \begin{equation}\label{Nestimate1}
N_Y = {1\over 2}(\dim_{\F} \widehat{HF}(Y) - |\mathrm{Tor}\ H_1(Y; \zee)|)
\end{equation}
suffices for the Theorems \ref{thm1} and \ref{thm2} above, where $\widehat{HF}(Y)$ denotes the  Heegaard Floer groups of $Y$  with coefficients in $\F=\Z/2\Z$ and $|\mathrm{Tor}\ H_1(Y; \zee)|$ is the number of elements in the torsion submodule of first singular homology.
  Theorem \ref{thm2} can be viewed as an answer to \OnS's question, and Theorem \ref{thm1} as a  geometric interpretation of  the rank of the Heegaard Floer homology groups of a $3$-manifold $Y$: it is a ``speed limit" for fibered knots in $Y$ with respect to the twist coefficient.  Such an interpretation raises the natural question
  
 \begin{question}\label{question2} Does every $3$-manifold contain a ``fast" knot? That is, a fibered knot for which the absolute value of the twist coefficient lies in the interval $[N_Y,N_Y+1]$?
 \end{question}    
 
This question is closely tied to the conjecture that L-spaces are exactly those 3-manifolds without taut foliations.  Indeed, an affirmative answer to Question \ref{question2} would imply this conjecture, by recent work of Kazez and Roberts \cite{kazezrobertsII}.  In a related direction, is perhaps worth pointing out the following  corollary, stated in terms of the {\it reduced} Heegaard Floer homology groups:

\begin{corollary} \label{cor:fiberedgrowth} Let $K\subset Y$ be a fibered knot, and let $\Sigma_n(K)$ denote its $n$-fold branched cyclic cover.  Then  $$ \dim_\F HF^{red}(\Sigma_n(K)) \ge n\cdot|\tau(\phi_K)|-1.$$  In particular, if $K$ has right- (or left-)veering monodromy then all cyclic branched covers over $K$ with sufficiently large order are not L-spaces.
\end{corollary}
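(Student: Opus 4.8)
The plan is to realize the branched cover $\Sigma_n(K)$ as the ambient manifold of an explicit open book and then quote Theorem~\ref{thm1}. Write $\phi = \phi_K$ for the monodromy and $S$ for the fiber. The complement $Y\setminus K$ deformation retracts onto the mapping torus $M_\phi$, and the homomorphism $\pi_1(Y\setminus K)\to\Z$ defining the $n$-fold cyclic branched cover is exactly the one induced by the fibration $M_\phi\to S^1$: it sends a meridian of $K$, which runs once around the $S^1$ factor, to $1$. Pulling this fibration back along the degree-$n$ self-covering of $S^1$ identifies the $n$-fold cover of $Y\setminus K$ with the mapping torus $M_{\phi^n}$, and filling back in the branch locus $\widetilde K$---the preimage of $K$, which maps homeomorphically onto $K$ and hence is a single knot---exhibits $\Sigma_n(K)$ as the $3$-manifold of the open book $(S,\phi^n)$, with connected binding $\widetilde K$ and monodromy $\phi^n$.

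With this in hand I would proceed in three short steps. First, apply Theorem~\ref{thm1} to the fibered knot $\widetilde K\subset\Sigma_n(K)$: since its monodromy is $\phi^n$, we get $|\tau(\phi^n)|\le N_{\Sigma_n(K)}+1$. Second, use homogeneity of the fractional Dehn twist coefficient (item~(2) of the Proposition above), $\tau(\phi^n)=n\,\tau(\phi)$. Third, bound $N_{\Sigma_n(K)}$ from above: by the formula \eqref{Nestimate1} we have $2N_{\Sigma_n(K)}=\dim_\F\widehat{HF}(\Sigma_n(K))-|\mathrm{Tor}\,H_1(\Sigma_n(K);\Z)|$, while the standard relation $\dim_\F\widehat{HF}(Z)\le|\mathrm{Tor}\,H_1(Z;\Z)|+2\dim_\F HF^{red}(Z)$---valid for any closed oriented $3$-manifold $Z$, coming from the long exact sequence relating $\widehat{HF}$ and $HF^+$ and encoding the fact that L-spaces are precisely the rational homology spheres with vanishing $HF^{red}$---gives $N_{\Sigma_n(K)}\le\dim_\F HF^{red}(\Sigma_n(K))$. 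Stringing these together,
\[
n\,|\tau(\phi_K)|=|\tau(\phi^n)|\le N_{\Sigma_n(K)}+1\le\dim_\F HF^{red}(\Sigma_n(K))+1,
\]
which is the desired inequality.

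For the final sentence, recall that an L-space has $HF^{red}=0$, so the inequality already prevents $\Sigma_n(K)$ from being an L-space as soon as $n\,|\tau(\phi_K)|>1$. A right-veering (resp.\ left-veering) monodromy has $\tau(\phi_K)\ge 0$ (resp.\ $\le 0$) by the work of Honda, Kazez, and Matic, and whenever this inequality is strict---automatically so when $\phi_K$ is pseudo-Anosov---the right-hand side above grows without bound, so all cyclic branched covers over $K$ of sufficiently large order fail to be L-spaces.

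I do not anticipate a serious obstacle, since the statement is genuinely a corollary of Theorem~\ref{thm1}; the one point requiring care is the first paragraph---checking that the $n$-fold branched cover of a fibered knot is again the ambient manifold of an open book, with the \emph{same} page, monodromy $\phi^n$, and, crucially, \emph{connected} binding, so that Theorem~\ref{thm1} (stated only for open books with connected binding) applies. The remaining bookkeeping, converting \eqref{Nestimate1} into the $HF^{red}$ estimate, is routine.
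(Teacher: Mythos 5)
Your first two steps match the paper exactly: identify the open book on $\Sigma_n(K)$ with page $S$ and monodromy $\phi_K^n$, then combine the tightness bound with homogeneity of $\tau$. The gap is in the third step, where you convert $N_{\Sigma_n(K)}$ to $\dim_\F HF^{red}$. The inequality you invoke,
\[
\dim_\F\widehat{HF}(Z)\le|\mathrm{Tor}\,H_1(Z;\Z)|+2\dim_\F HF^{red}(Z),
\]
is \emph{not} valid for every closed oriented $3$-manifold $Z$; it fails whenever $b_1(Z)>0$. Take $Z=S^1\times S^2$: then $\dim_\F\widehat{HF}(Z)=2$, $|\mathrm{Tor}\,H_1(Z)|=1$, and $HF^{red}(Z)=0$, so the claimed inequality reads $2\le 1$. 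The structural reason is the one recorded in the proof of Theorem~\ref{basicthm}: $\dim\widehat{HF}(Y,\s)=2\dim\coker U+k_\s$ where $k_\s$ is the rank of $HF^\infty(Y,\s)$ over $\F[U,U^{-1}]$, and for $b_1(Y)>0$ one has $k_\s\ge 2$ on torsion $\spinc$ structures, so $\sum_\s k_\s>|\mathrm{Tor}\,H_1|$ and the formula \eqref{Nestimate1} for $N_Y$ can strictly exceed $\dim_\F HF^{red}(Y)$. Since the corollary allows $Y$ (hence $\Sigma_n(K)$) to have positive first Betti number, your estimate $N_{\Sigma_n(K)}\le\dim_\F HF^{red}(\Sigma_n(K))$ does not hold in general.

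The fix is to bypass the stated formula \eqref{Nestimate1} and instead extract the sharper intermediate inequality that the proof of Theorem~\ref{basicthm} actually establishes, namely $|\tau(\phi_K^n)|\le 1+\dim_\F\coker\bigl(U:HF^+(\Sigma_n(K))\to HF^+(\Sigma_n(K))\bigr)$. One then observes that $\coker U$ is a quotient of $\coker U^k$ for every $k$, hence of $HF^{red}=\lim_{k\to\infty}\coker U^k$, giving $\dim_\F\coker U\le\dim_\F HF^{red}(\Sigma_n(K))$ unconditionally. This is exactly what the paper does. The remainder of your argument, including the deduction about right- or left-veering monodromy, is correct once this step is repaired.
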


\noindent Note that work of Kazez and Roberts could be used to show that high order branched cyclic covers of  fibered knots with right -or left-veering monodromy are not L-spaces, but that their work wouldn't produce the quantitative growth rate of the corollary.  We expect that for most knots the rank of the reduced Floer homology of branched cyclic covers  will grow at least linearly in the order of the cover, but there are examples (for example, the figure eight knot \cite{BaldwinGenusOne}) for which all branched cyclic covers are L-spaces.

Coming back to Question \ref{question2}, we should remark that there are many  useful refinements of $N_Y$ made possible by taking into account further structure on the Floer groups  (see the remarks after the proof of Theorem \ref{basicthm}), and these refinements should be accounted for in making our question precise. For example, since our argument depends only on one \spinc structure at a time, the value 
\begin{equation}\label{Nestimate2}
N_Y = \underset{\s\in \mbox{\scriptsize spin${}^c$}(Y)}\max{1\over 2}( \dim_\F \widehat{HF}(Y,\s) - 1) 
\end{equation}
also suffices for our theorems (here we assume $Y$ is a rational homology sphere for convenience, c.f.\ Remark \ref{Nestimateremark}). In general \eqref{Nestimate2} produces much tighter bounds than \eqref{Nestimate1}, though both give $N_Y = 0$ if $Y$ is an $L$-space. 

If we are given more data about the knot our bound can be sharpened further. To state one such result, recall that an oriented plane field distribution on a closed oriented 3-manifold is determined up to homotopy by two pieces of data: its associated \spinc structure, together with a ``3-dimensional invariant,'' as described by Gompf \cite{Gompf} (ultimately this classification goes back to Pontryagin). Supposing $\xi$ to be a plane field on $Y$ whose \spinc structure $\s_\xi$ has torsion first Chern class, the 3-dimensional invariant is a rational number called the Hopf invariant $h(\xi)$ (see Equation \eqref{hopfinvt} in Section \ref{gradedsec} below). Now whenever a \spinc structure has torsion Chern class, the associated Heegaard Floer homology group carries a rational-valued grading.   The reduced Floer homology groups are finite-dimensional and, in particular, can be nonzero in at most finitely many degrees. Keeping this in mind, the following theorem provides a more precise bound on the twist coefficient of a fibered knot, given the homotopy data of its associated contact structure.

\begin{thm}\label{thm3} Let $\xi$ be a contact structure on $Y$ whose associated \spinc structure $\s_\xi$ is torsion, and let $(S, \phi)$ be any genus $g$ open book supporting $\xi$ with connected binding. Then the  twist coefficient of $\phi$ satisfies
\[
-1 - \dim_\F HF^{red}_{2g-1-h(\xi)}(-Y, \s_\xi) \,\leq\, \tau(\phi)\, \leq\, 1 + \dim_\F HF^{red}_{-h(\xi)}(-Y,\s_\xi).
\]
\end{thm}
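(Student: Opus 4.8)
\emph{Strategy.}
The two displayed inequalities are mirror images of one another: $(S,\phi^{-1})$ is an open book with connected genus $g$ binding for $-Y$ supporting the conjugate contact structure, one has $\tau(\phi^{-1})=-\tau(\phi)$ by homogeneity, and $h$ together with the relevant grading transform compatibly under this orientation reversal (a bookkeeping exercise with \eqref{hopfinvt} and the duality isomorphism for $HF^{red}$). So it is enough to prove the lower bound. By the Boundary Twisting property of $\tau$, together with the theorem of Honda, Kazez, and Matic that $\tau(\phi_n)\ge 0$ whenever $(S,\phi_n)$ supports a tight contact structure, the lower bound follows from this quantitative sharpening of Theorem~\ref{thm2}: setting $D=\dim_\F HF^{red}_{2g-1-h(\xi)}(-Y,\s_\xi)$, writing $\phi_n=\phi\circ t_\partial^{n}$ and letting $(Y_n,\xi_n)$ be the contact manifold carried by $(S,\phi_n)$ (so $(Y_0,\xi_0)=(Y,\xi)$), one has $c(\xi_{D+1})\ne 0$. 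Indeed $\xi_{D+1}$ is then tight, so $\tau(\phi)+(D+1)=\tau(\phi_{D+1})\ge 0$.

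\emph{The mechanism.}
Composing a monodromy with $t_\partial$ is a Legendrian surgery: a push-off $\delta$ of the connected binding $B$ lies on a convex page and is realized Legendrian there with contact framing equal to the page framing, and $(S,\phi_{n+1})$ is contact $(-1)$-surgery on it. Since $B$ is null-homologous, $Y_n$ is $(-1/n)$-surgery on $B$, its \spinc structures canonically identified with those of $Y$, and the surgery exact triangle for the slopes $\{-1/n,\,-1/(n+1),\,0\}$ on $B$ relates $\widehat{HF}(-Y_n)$, $\widehat{HF}(-Y_{n+1})$ and $\widehat{HF}(-\widehat Y)$, where $\widehat Y$ is the closed genus $g$ surface bundle obtained by capping the pages with disks — independent of $n$. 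The binding of $(S,\phi_n)$ is again fibered of genus $g$, so its $\widehat{HFK}$ in the extremal Alexander grading is $\F$, and (following the description of the \OS contact class via the knot filtration) $c(\xi_n)$ is the image of this generator under the natural map to $\widehat{HF}(-Y_n)$; by the fibered detection results of \OnS\ (and of Ghiggini and Ni in low genus) the corresponding extremal summand of $HF^+(\widehat Y)$ has rank one. Naturality of the contact invariant under the Legendrian surgery cobordisms makes the $c(\xi_n)$ compatible along the triangles — the map $\widehat{HF}(-Y_{n+1})\to\widehat{HF}(-Y_n)$ in the triangle sends $c(\xi_{n+1})$ to $c(\xi_n)$ — so $\{n\ge 0:c(\xi_n)\ne 0\}$ is up-closed, and it remains to show it contains $D+1$.

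\emph{The count.}
For this, run through the exact triangles and their cobordism maps one $\R$-grading at a time. The grading-shift formula for the $2$-handle cobordisms, combined with the evolution of the Hopf invariant $d_3(\xi_n)$ under successive contact $(-1)$-surgeries, shows that the obstruction to $c(\xi_n)\ne 0$ lives, once transported back along the cobordisms, in the single group $HF^{red}_{2g-1-h(\xi)}(-Y,\s_\xi)$ — degree $-h(\xi)$ being the one relevant to the opposite twisting direction, which accounts for both gradings in the statement. Because in this degree the third term $\widehat{HF}(-\widehat Y)$ contributes only its rank-one extremal fibered summand, a consecutive run $c(\xi_0)=c(\xi_1)=\dots=c(\xi_m)=0$ forces $m\le D$, each step of the run accounting for a distinct dimension of this reduced group. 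Were $c(\xi_0),\dots,c(\xi_{D+1})$ all zero this would give $D+1\le D$; so some $c(\xi_j)\ne 0$ with $j\le D+1$, and by up-closedness $c(\xi_{D+1})\ne 0$.

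\emph{Where the work is.}
The serious content is the last paragraph: proving that the \emph{single} reduced group $HF^{red}_{2g-1-h(\xi)}(-Y,\s_\xi)$ is exactly the receptacle for the obstruction and that each step on which $c(\xi_n)$ still vanishes consumes one of its dimensions. This needs (i) an exact computation of the $\R$-grading of $c(\xi_n)$ and of the grading shifts of the surgery cobordism maps — equivalently a clean formula for $d_3(\xi_n)$; (ii) locating the extremal fibered summand of $\widehat{HF}(-\widehat Y)$ in the right degree and checking it has rank one, an input from fibered detection which degenerates when $g\le 1$ ($\widehat S$ a sphere or torus) and must then be handled separately; and (iii) using exactness of the triangle together with naturality of $c(\xi_n)$ to convert ``$c(\xi_n)=0$'' into a genuine one-dimensional loss. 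One should also check that running the argument one \spinc structure and one degree at a time recovers the cruder estimates \eqref{Nestimate1} and \eqref{Nestimate2} behind Theorems~\ref{thm1} and~\ref{thm2}.
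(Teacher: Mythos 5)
Your strategy reduces, correctly, to showing $c(\xi_{D+1})\ne 0$ and then invoking Honda--Kazez--Matic plus the boundary-twisting property, and your duality reduction of the upper bound to the lower bound is the same trick the paper uses (its Corollary~\ref{twistboundcor} and Lemma~\ref{duallemma}). Where you part ways with the paper is in how the non-vanishing of $c(\xi_{D+1})$ is extracted: you propose to iterate the ordinary ($n=1$) surgery triangle along the sequence of slopes $\{-1/n,\,-1/(n+1),\,0\}$ and argue that each step of a ``run'' of vanishing contact classes burns one dimension of $HF^{red}_{2g-1-h(\xi)}(-Y,\s_\xi)$, whereas the paper uses a \emph{single} exact triangle with $\F[C_n]$-twisted coefficients on the $0$-surgery term, reads off that the relevant summand is $\F^n$ with trivial $U$-action (Lemma~\ref{fulltwistlemma}), and then concludes from $U$-equivariance of the cobordism map $G$ that $G$ factors through $\coker U\cap HF^+_{[\gamma]+2g-1}(-Y)$ (Theorem~\ref{boundthm}). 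That twisted-coefficient triangle is exactly the ``wrapped'' form of your iteration, and the paper's approach buys two things that your outline does not yet secure.

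First, the ``each step consumes a distinct dimension'' claim is asserted, not proved; you say so yourself in the final paragraph, but this is the entire content of the theorem, not a routine detail. Second, and more substantively, if you run the naive accounting you sketch — at each level use $c(\xi_{n+1})=0$ to force the extremal $\F$ of $\widehat{HF}(-\widehat Y)$ to inject/surject, conclude $\dim\widehat{HF}_{d_n}(-Y_n)=\dim\widehat{HF}_{d_{n+1}}(-Y_{n+1})+1$, and iterate — you get a lower bound on $\dim\widehat{HF}_{d}(-Y)$ (or $\dim HF^+_d(-Y)$), \emph{not} on $\dim HF^{red}_d(-Y)$. These can differ by $1$ when $d$ is the bottom degree of the $\mathcal T^+$-tower, and when they do the run bound only gives $D+1\le D+1$, i.e.\ no contradiction. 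The paper avoids this by feeding the trivial $U$-action on the $\s_{1-g}$ summand through the $U$-equivariance of a single cobordism map to land directly in $\coker U$; to recover the same sharpness from the iterated triangles you would need to show that the composite $\widehat{HF}(-Y_m)\hookrightarrow\widehat{HF}(-Y)$ actually misses the tower image, which is not automatic and is not in your write-up. Beyond that, the grading transport (Proposition~\ref{gradingprop}, Corollaries~\ref{fiberedgradingcor}, \ref{contactgradingcor}, \ref{uniqcor}) and the $g=1$ case (which the paper handles with Novikov coefficients, not Ghiggini--Ni) are flagged but not carried out. So: same overall skeleton, genuinely different key mechanism, but the mechanism as stated has a real gap in the dimension count that needs to be closed before this can be called a proof.
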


A slightly sharper version is given in Corollary \ref{twistboundcor} below, which also applies when $\s_\xi$ is non-torsion. As before, the bounds on twist number come from estimating the number of boundary twists which, when added to the monodromy, is sufficient to obtain a tight contact structure. These estimates, combined with the dependence on the page genus in  Theorem \ref{thm3} and the remark in the previous paragraph, yield  a surprising  corollary.  It indicates that  ``most" open books which support a given contact structure yield a tight structure after adding a single right-handed Dehn twist along the boundary.

\begin{corollary} Let $\xi$ be a contact structure with torsion Euler class, and let $g$ be sufficiently large.  Then for {\em any} genus $g$ open book decomposition  which supports $\xi$ (with connected binding),  adding a single right-handed, boundary-parallel Dehn twist to the monodromy  produces a tight contact structure.
\end{corollary}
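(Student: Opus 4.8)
The plan is to derive this corollary from Theorem \ref{thm3} (or, more precisely, from the sharper Corollary \ref{twistboundcor}), combined with the well-known quantization of the fractional Dehn twist coefficient. First I would recall that, by the homotopy classification of plane fields, the Hopf invariant $h(\xi)$ is a fixed rational number determined by $\xi$; it does not depend on the choice of supporting open book. Likewise the \spinc structure $\s_\xi$, and hence the graded group $HF^{red}_*( -Y,\s_\xi)$, is fixed once $\xi$ is fixed. The crucial observation is that $HF^{red}(-Y,\s_\xi)$ is finite dimensional, so there is a finite interval $[a,b]$ of rational gradings outside of which $HF^{red}_*(-Y,\s_\xi)=0$. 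Applying Theorem \ref{thm3} to a genus $g$ open book $(S,\phi)$ supporting $\xi$, the lower bound reads $\tau(\phi)\ge -1-\dim_\F HF^{red}_{2g-1-h(\xi)}(-Y,\s_\xi)$; once $g$ is large enough that $2g-1-h(\xi) > b$, this group vanishes and the lower bound becomes simply $\tau(\phi)\ge -1$.

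Second, I would invoke the arithmetic constraint on $\tau$. For a surface $S$ of genus $g$ with one boundary component, the denominator of $\tau(\phi)$ is bounded in terms of the topology of $S$ (c.f. \cite[Theorem 4.4]{kazezroberts}); in particular $\tau(\phi)$ is never a negative irrational or a value strictly between $-1$ and $0$ that could force more than one boundary twist. More to the point, $\tau(\phi)\ge -1$ together with the Boundary Twisting property $(3)$ of the Proposition gives $\tau(\phi\circ t_{\partial S}) = \tau(\phi)+1 \ge 0$. Now I appeal to the theorem of Honda, Kazez, and Matic cited in the introduction: when $\tau$ of a monodromy is negative the supported contact structure is overtwisted, and — in the direction actually needed — the results underlying Theorem \ref{thm2} show that once enough right-handed boundary twists have been added the structure is tight. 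In fact the cleanest route is: the proof of Theorem \ref{thm2} shows $(S,\phi\circ t_{\partial S}^n)$ is tight for all $n>N'$, where $N'$ is controlled by $\dim_\F HF^{red}_{2g-1-h(\xi)}(-Y,\s_\xi)$ via Corollary \ref{twistboundcor}; and we have just argued this group is zero for $g$ large, forcing the threshold down to $N' \le 0$, i.e. $n=1$ already suffices.

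Thus the key steps, in order, are: (1) fix $\xi$, extract the finitely many gradings in which $HF^{red}(-Y,\s_\xi)$ is supported, and choose $g$ large enough that the relevant grading $2g-1-h(\xi)$ exceeds all of them; (2) apply Corollary \ref{twistboundcor}/Theorem \ref{thm3} to conclude that for any genus $g$ supporting open book the lower twist bound collapses to $\tau(\phi) \ge -1$, equivalently that the threshold number of boundary twists needed for tightness is at most $1$; (3) conclude via the Boundary Twisting property and the tightness criterion that $(S, \phi \circ t_{\partial S})$ is tight. I expect the main obstacle to be purely bookkeeping: one must make sure that the grading convention in Theorem \ref{thm3} (a shift by $2g-1-h(\xi)$ on the page genus $g$) is matched correctly, that the finite support of $HF^{red}$ is bounded \emph{above} as $g\to\infty$ so that the growing grading eventually escapes it, and that the sharper Corollary \ref{twistboundcor} is indeed strong enough to turn ``$HF^{red}$ in that degree vanishes'' into ``one twist suffices'' rather than merely ``some bounded number of twists suffices.'' Once those identifications are made, the corollary is immediate.
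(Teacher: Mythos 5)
Your plan is correct and in essence the same as the paper's one-line proof: since $HF^{red}(-Y,\s_\xi)$ is finite-dimensional it is supported in only finitely many degrees, so for $g$ large enough $HF^{red}_{2g-1-h(\xi)}(-Y,\s_\xi)=0$, and then Corollary \ref{mintwistcor} (the tightness criterion $n>\mathcal{K}^*(-Y,[\gamma]+2g-1)$, which in this situation collapses to $n>0$) gives tightness after a single boundary-parallel twist. One caution: your intermediate step via ``$\tau\ge 0$ plus Honda--Kazez--Matic'' cannot by itself yield tightness, since a right-veering monodromy need not support a tight structure; you do correctly pivot away from it to the direct tightness criterion, but note the precise reference for that criterion is Corollary \ref{mintwistcor} rather than Corollary \ref{twistboundcor} (the latter only bounds $\tau$, though its proof does route through \ref{mintwistcor}).
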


\noindent Indeed, we need only choose $g$ large enough that $HF^{red}_{2g-1-h(\xi)}(-Y,\s_\xi) = 0$. Analogously, we have

\begin{corollary} Fix an oriented 3-manifold $Y$.  Then there is a rational number $r_Y\in \Q$ with the following property. Let $\xi$ be any contact structure on $Y$ with $\s_\xi$ torsion and Hopf invariant $h(\xi)\leq r_Y$. Then $\xi$ becomes tight after adding a single right boundary-parallel twist to the monodromy of any supporting open book with connected binding.
\end{corollary}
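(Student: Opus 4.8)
The plan is to derive this corollary from Theorem~\ref{thm3} (or its refinement Corollary~\ref{twistboundcor}) in exactly the same way as the preceding corollary, the only extra ingredient being the finiteness of reduced Floer homology. Recall that the argument behind Theorem~\ref{thm3} establishes the implication: if $(S,\phi)$ is a genus~$g$ open book with connected binding supporting a contact structure $\xi$ on $Y$ with $\s_\xi$ torsion, and if
\[
HF^{red}_{2g-1-h(\xi)}(-Y,\s_\xi)=0,
\]
then $(S,\phi\circ t_{\partial S})$ determines a tight contact structure (and, in particular, the left-hand bound of Theorem~\ref{thm3} then improves to $\tau(\phi)\ge -1$). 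So it suffices to choose $r_Y$ so that the hypothesis $h(\xi)\le r_Y$ forces this vanishing, uniformly over all such $\xi$ and all supporting open books.

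First I would note that the relevant groups of $-Y$ are uniformly supported. There are only finitely many \spinc structures $\s$ on $Y$ with torsion first Chern class (they form a torsor over the finite group $\mathrm{Tor}\,H^2(Y;\zee)$), and for each of these $HF^{red}(-Y,\s)$ is a finite-dimensional $\F$-vector space carrying a $\Q$-grading, hence is nonzero in only finitely many degrees. Consequently there is an integer $D_Y$ such that $HF^{red}_{d}(-Y,\s)=0$ whenever $d>D_Y$ and $c_1(\s)$ is torsion; concretely one may take $D_Y$ to be the largest degree in which any such group is nonzero (or any integer at all if they all vanish). Both assertions depend only on $Y$.

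Next I would set $r_Y:=-2-D_Y\in\Q$ and verify it works. Let $\xi$ be a contact structure on $Y$ with $\s_\xi$ torsion and $h(\xi)\le r_Y$, and let $(S,\phi)$ be any open book with connected binding supporting $\xi$, with page genus $g\ge 0$. Then
\[
2g-1-h(\xi)\ \ge\ -1-h(\xi)\ \ge\ -1-r_Y\ =\ 1+D_Y\ >\ D_Y,
\]
so $HF^{red}_{2g-1-h(\xi)}(-Y,\s_\xi)=0$ by the choice of $D_Y$, and the implication recalled above shows that $(S,\phi\circ t_{\partial S})$ supports a tight contact structure.

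I do not expect a genuine obstacle here beyond Theorem~\ref{thm3} itself: the content of the corollary is simply that the grading $2g-1-h(\xi)$ can be pushed above the finite support of $HF^{red}(-Y)$ either by increasing the page genus (as in the previous corollary) or, with the page genus unconstrained, by taking the Hopf invariant sufficiently negative. The only point needing care is that $r_Y$ be chosen uniformly in $\xi$, which is automatic because $D_Y$ is extracted from the finitely many torsion \spinc summands of $HF^{red}(-Y)$, a quantity depending only on $Y$.
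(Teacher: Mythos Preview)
Your proof is correct and follows essentially the same approach as the paper, which simply remarks that one chooses $r_Y$ so that $HF^{red}_d(-Y,\s)=0$ for all torsion $\s$ whenever $d\geq -r_Y$. Your version is more explicit about why such a bound exists (finitely many torsion \spinc structures, finite-dimensionality of $HF^{red}$) and more careful with the inequalities, in particular handling the degenerate $g=0$ case by taking $r_Y=-2-D_Y$ rather than the paper's slightly looser $-D_Y$.
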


\noindent In this corollary we simply choose $r_Y$ so that $HF^{red}_{d}(-Y, \s) = 0$ for all torsion \spinc structures $\s$, whenever $d \geq -r_Y$.

In a different direction, our results readily imply a connection between the ``twist number'' of a closed braid in $S^3$ and the reduced Khovanov homology of the  link obtained as its closure.  We thank John Baldwin and Liam Watson for bringing this to our attention.

\begin{thm}\label{braidthm} Let $L$ be a link in $S^3$, and let $\hat{\beta}$ be any closed braid isotopic to $L$ and having an odd number of strands. Then
\[
|\tau(\hat{\beta})| \leq \dim_\F \widetilde{Kh}(-L) - |\det(L)| + 2.
\]
Here $\tau(\hat{\beta})$ is the twist coefficient of $\beta$, viewed as an element in the mapping class group of the disk with $n$ marked points, and  $\widetilde{Kh}$ denotes reduced Khovanov homology.
\end{thm}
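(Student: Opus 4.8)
The plan is to pass to the double branched cover and reduce to Theorem~\ref{thm1}. Let $U\subset S^3$ be the braid axis of $\hat{\beta}$, an unknot with $\mathrm{lk}(U,\hat{\beta})=n$. Since $n$ is odd, the preimage $\widetilde U$ of $U$ in the double cover $\Sigma(\hat{\beta})$ of $S^3$ branched over $\hat{\beta}$ is connected, and $\Sigma(\hat{\beta})$ inherits an open book decomposition $(\widehat S,\widetilde\phi)$ with binding $\widetilde U$: the page $\widehat S$ is the double cover of the disk branched over the $n$ marked points --- a compact surface of genus $(n-1)/2$ with a \emph{single} boundary component, again because $n$ is odd --- and the monodromy $\widetilde\phi$ is the lift of $\beta$ (viewed as a homeomorphism of the marked disk) which restricts to the identity on $\partial\widehat S$. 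This is the classical branched--cover description of a closed braid together with its axis. Since $\hat{\beta}$ is isotopic to $L$ we have $\Sigma(\hat{\beta})\cong\Sigma(L)$, and we henceforth write $\Sigma(L)$ for this manifold.

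The first real step is the twist--coefficient identity
\[
\tau(\hat{\beta})\;=\;\tau(\beta)\;=\;2\,\tau(\widetilde\phi),
\]
whose first equality is the definition of $\tau(\hat{\beta})$ used in the statement. The heuristic for the factor $2$ is that $\partial\widehat S\to\partial D^2$ is a connected double cover, so winding numbers of boundary arcs are halved upon lifting. Concretely, a boundary--parallel curve in the disk encircling all $n$ marked points lifts to a connected curve $\widehat c$ isotopic to $\partial\widehat S$, and the Dehn twist $t_{\partial D^2}$ --- that is, the full twist $\Delta^2$ --- lifts to a ``half twist'' along $\widehat c$ glued to the hyperelliptic involution of $\widehat S$ over the complementary subsurface; from this one reads off $\widetilde\phi^{\,2}=t_{\partial\widehat S}$ when $\beta=\Delta^2$, whence homogeneity gives $\tau(\widetilde\phi)=\tfrac{1}{2}=\tfrac{1}{2}\tau(\Delta^2)$. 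In general I would lift a free Nielsen--Thurston isotopy $\Phi$ from $\beta$ to its standard representative: although the intermediate maps $\Phi_t$ move the marked points so that $\Phi$ does not lift naively, the double covers of the disk branched over $\Phi_t(\text{marked points})$ assemble into a surface bundle over $[0,1]$, which is trivial since both of its ends are $\widehat S$, and this produces a free isotopy $\widehat\Phi$ of $\widehat S$ covering $\Phi$ and connecting $\widetilde\phi$ to a lift of the standard representative. Comparing winding numbers of boundary arcs through the degree--two covering $\partial\widehat S\to\partial D^2$ then yields $\tau(\widetilde\phi)=\tfrac{1}{2}\tau(\beta)$.

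Granting this, I would apply Theorem~\ref{thm1} to the fibered knot $\widetilde U\subset\Sigma(L)$, whose monodromy is $\widetilde\phi$; this gives $|\tau(\widetilde\phi)|\le N_{\Sigma(L)}+1$, hence $|\tau(\hat{\beta})|=2|\tau(\widetilde\phi)|\le 2N_{\Sigma(L)}+2$. Taking $N_{\Sigma(L)}$ as in \eqref{Nestimate1}, we have $2N_{\Sigma(L)}=\dim_\F\widehat{HF}(\Sigma(L))-|\mathrm{Tor}\ H_1(\Sigma(L);\Z)|$. Suppose first that $\det(L)\neq 0$, so $\Sigma(L)$ is a rational homology sphere with $|\mathrm{Tor}\ H_1(\Sigma(L);\Z)|=|H_1(\Sigma(L);\Z)|=|\det(L)|$; then the \OS spectral sequence, with $E_2$ page the reduced Khovanov homology $\widetilde{Kh}(-L)$ of the mirror and converging to $\widehat{HF}(\Sigma(L))$ (all coefficients in $\F$), gives $\dim_\F\widehat{HF}(\Sigma(L))\le\dim_\F\widetilde{Kh}(-L)$. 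Combining these,
\[
|\tau(\hat{\beta})|\;\le\;\dim_\F\widehat{HF}(\Sigma(L))-|\det(L)|+2\;\le\;\dim_\F\widetilde{Kh}(-L)-|\det(L)|+2,
\]
which is the desired inequality. When $\det(L)=0$ the cover $\Sigma(L)$ is no longer a rational homology sphere; one then argues the same way using the refinements of \eqref{Nestimate1} indicated in Remark~\ref{Nestimateremark} together with the compatibility of the spectral sequence with the torsion \spinc decomposition of $\widehat{HF}$, the conclusion following because $|\det(L)|=0$.

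I expect the crux to be the twist--coefficient identity $\tau(\beta)=2\tau(\widetilde\phi)$: one must carefully track the factor of two together with the hyperelliptic deck transformation --- which enters precisely when $\tau(\beta)$ is odd, so that the naive lift of a boundary--parallel twist does not fix $\partial\widehat S$ --- and one must justify lifting the free Nielsen--Thurston isotopy despite the marked points moving, via the triviality of the bundle of branched covers over the interval. Everything else is essentially bookkeeping: the branched--cover open book is classical, Theorem~\ref{thm1} is invoked verbatim, and the inequality $\dim_\F\widehat{HF}(\Sigma(L))\le\dim_\F\widetilde{Kh}(-L)$ is exactly the \OS branched--double--cover spectral sequence.
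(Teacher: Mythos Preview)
Your proposal is correct and follows essentially the same route as the paper: pass to the double branched cover to obtain an open book with connected binding (using oddness of $n$), use $\tau(\widetilde\phi)=\tfrac12\tau(\beta)$, apply Theorem~\ref{thm1} with the estimate \eqref{Nestimate1}, and bound $\dim_\F\widehat{HF}(\Sigma(L))$ by $\dim_\F\widetilde{Kh}(-L)$ via the \OS\ spectral sequence. The paper simply asserts the halving identity as ``not hard to check'' and handles the $\det(L)=0$ case by noting that $|\mathrm{Tor}\,H_1(\Sigma(L))|\ge 0=|\det(L)|$, so your appeal to Remark~\ref{Nestimateremark} there is unnecessary.
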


The organization of this article is as follows.  In the next section we  give a proof of our main results, Theorems \ref{thm1} and \ref{thm2}, based on a surgery exact triangle for Heegaard Floer homology with twisted coefficients.  Section \ref{proofsec} also contains the proof of Corollary \ref{cor:fiberedgrowth}. In  Section \ref{braidsec}, we spell out the connection between twist numbers and Khovanov homology.  Then in Section \ref{gradedsec} we revisit our proof of Theorems \ref{thm1} and \ref{thm2} to refine our estimates on $N_Y$ and give the proof of Theorem \ref{thm3}, making use of an absolute grading on Heegaard Floer homology by homotopy classes of oriented plane fields on $Y$ due to Gripp and Huang \cite{GH}. In the final section we provide more details on the construction of the twisted surgery triangle that plays a primary role in the proof of our main theorems.

\bigskip
\noindent {\bf Acknowledgements:}  The authors would like to thank the Tokyo Institute of Technology, and Tamas Kalman, Hitoshi Murakami, and Yuanyuan Bao for organizing a conference held there in 2012, where the idea for this project originated.  We are indebted to John Baldwin, Will Kazez, David Krcatovich, Gordana Matic, Olga Plamenevskaya,  and Liam Watson for interesting conversations and valuable input. Matt Hedden was partially supported by NSF grant DMS-0906258,  NSF CAREER grant DMS-1150872, and an Alfred P. Sloan Research Fellowship.  Thomas Mark was partially supported by NSF grants DMS-0905380 and DMS-1309212.

%The remainder of the paper is organized as follows. The next section discusses a surgery exact triangle for Heegaard Floer homology with twisted coefficients and gives a proof of Theorem \ref{thm1}. Section \ref{braidsec} applies these results to deduce a connection between the ``twist number'' of a closed braid in $S^3$ and the reduced Khovanov homology of the corresponding link. In section \ref{gradedsec} we revisit the proof of Theorem \ref{thm1} to refine our estimates on $N_Y$, making use of an absolute grading on Heegaard Floer homology by homotopy classes of oriented plane fields on $Y$ due to Gripp and Huang \cite{GH}.% In the final section we consider the case of a surface of genus 1, which must be treated separately throughout.

\section{Proof of Theorems \ref{thm1} and \ref{thm2}}\label{proofsec}
\noindent We  work in characteristic two throughout, and let $\F=\Z/2\Z$.  This is 
 for simplicity, and all our arguments could be made with $\Z$ in place of $\F$.  

\vskip0.1in
In this section we prove Theorem \ref{thm2}, from which Theorem \ref{thm1} will follow easily.  More precisely, we show that adding $$N_Y = {\textstyle 1\over 2}(\mathrm{dim}_\F\widehat{HF}(Y) - |\mathrm{Tor}_\zee \ H_1(Y;\zee)|)$$
right-handed Dehn twists to the boundary of any open book decomposition $(S,\phi)$ of $Y$ will produce an open book decomposition for a tight contact structure.  The key observation is that the manifold specified by  $(S,\phi\circ t_\partial^n)$ is homeomorphic to $Y_{-1/n}(K)$, where $K=\partial S$ is the binding of the open book, viewed as a knot in $Y$.  Let $\xi_{n}$ denote the contact structure on $Y_{-1/n}(K)$ induced by $(S,\phi\circ t_\partial^n)$. Our strategy is as follows
\begin{enumerate}
\item  Observe that to show  $\xi_{n}$ is tight, it suffices by \cite[Theorem 1.4]{Contact} to show that its contact invariant $c(\xi_{n})\in HF^+(-Y_{-1/n}(K))$ is not zero.
\item Fit  $HF^+(-Y_{-1/n}(K))$ into an exact triangle of modules over $\F[U]$
\begin{tikzpicture}[>=latex] 
\matrix (m) [matrix of math nodes, row sep=1em,column sep=1em]
{ HF^+(-Y_{-1/n}(K) ; \F)& &  {HF}^+(-Y; \F) \\
& & \\
&  \underline{HF}^+(-Y_0(K); \F[C_n])& \\  };

\path[->,font=\scriptsize]
(m-1-1) edge[->]   (m-1-3)
(m-1-3) edge[->] node[below] {$\ \ \ \ G$}(m-3-2)
(m-3-2) edge[->] node[below] {$F\ \ \ \ $} (m-1-1);
\end{tikzpicture}

\noindent where the bottom term is a twisted version of the Floer homology for zero surgery on the binding, with coefficients in the  group algebra of the cyclic group $C_n=\Z/n\Z$.

\item\label{item3} Show that non-triviality of   $F$, restricted to a particular subgroup  $$\underline{HF}^+(-Y_0(K),\s_{1-g}; \F[C_n]), $$ 
implies $c(\xi_n)\ne 0$. Here $\s_{1-g}$ is the $\spinc$ structure on  $Y_0(K)$ whose Chern class evaluates to $2-2g$ on the fiber and which is cobordant through the surgery cobordism to the $\spinc$ structure associated to $\xi$.

\item  Show that the subgroup from Step $(3)$ is isomorphic to $\F[C_n]$, as an $\F[U]$--module where $U$ acts as zero.  In particular, this group is a vector space of dimension $n$ over $\F$.

\item Conclude, by  Item \ref{item3} and exactness  at $-Y_0$, that $c(\xi_{n})\ne 0$ provided that $$n> \dim_\F \ \coker\ U: HF^+(-Y)\rightarrow HF^+(-Y),$$ and relate $\dim_\F  \coker\ U$ to $N_Y$.

\end{enumerate}

There are two main technical issues involved  in implementing this strategy.  The first  pertains to Steps $(2)$ and $(3)$.  The issue is that while the surgery exact triangle used for Step $(2)$ appears in various places in the literature, neither the definition nor the geometric content of the maps in the triangle is  totally clear.    For Step $(3)$ we need to prove a naturality result for  the contact invariant under the map  $F$. We achieve this  by first relating the maps in the exact triangle to maps on twisted Floer homology groups associated to $2$-handle cobordisms, and then relying on a naturality result for the contact submodule in twisted Floer homology under these latter maps.  In order to achieve this, we establish a general exact triangle satisfied by the (twisted) Floer homologies of certain triples of Dehn filled manifolds using a  well-known ``exact triangle detection lemma".  The above surgery triangle, and indeed all previously known exact triangles satisfied by Heegaard Floer modules of closed three manifolds, can be viewed as specializations.  % In particular we discuss the defintion of chain maps, homotopies, and higher $A_\infty$ structure maps in the twisted setting.   This discussion is  somewhat lacking in the literature, and so in the next section we  review the construction of the twisted exact triangle in a slightly more general context.
So as not to disrupt the flow of the argument, this discussion is postponed to Section \ref{trianglesec}.

The other technical issue is that Step $(4)$ fails  when the fiber surface $S$ has genus one; the relevant summand of $\underline{HF}^+(-Y_0(K),\F[C_n])$ is  infinite dimensional in this case.  To account for this, we  alter our coefficients through the discussion, replacing  $\F$ with a certain Novikov field $\Lambda$, which is the coefficient module for Floer homology perturbed by a $2$-form.  Using    Floer homology perturbed by a $2$-form Poincar{\'e} dual to a meridian of the binding, the case of genus one proceeds exactly as above. 

\subsection{Essentials of the Proof}\label{subsec:proof}
With the general outline of our proof in place, we turn to the details of the argument.  Suppose $W: Z_1\to Z_2$ is a compact oriented cobordism between closed connected oriented 3-manifolds $Z_1$ and $Z_2$.
 For each \spinc structure $\s$ on $W$ there is an induced homomorphism between the Heegaard Floer homology groups of $Z_1$ and $Z_2$. More generally, if $\mathbb{A}$ is a module for the group algebra $\F[H^1(Z_1; \zee)]$, there is a homomorphism in Floer homology with twisted coefficients,
\[
F_{W,\s}^\mathbb{A}: \ul{HF}^+(Z_1, \s_1; \mathbb{A}) \to \ul{HF}^+(Z_2, \s_2; \mathbb{A}\otimes_{\F[H^1(Z_1)]} \mathbb{K}(W)),
\]
where $\s_i = \s|_{Z_i}$, and $\mathbb{K}(W) = \F[\im(H^1(\partial W)\to H^2(W, \partial W))]$ (c.f. \cite[Theorem 3.8]{HolDiskFour}). %If $\mathbb{A}= \F[H^1(Z_1)]$, we call $\ul{HF}^+(Z_1; \mathbb{A})$ the ``fully twisted'' Floer homology, and write the corresponding homomorphism as $F_{W}^{full}$.   %If we are given a module $\mathbb{B}$ over $\mathbb{K}(W)$ we can further extend the construction to define a map
%\[
%F_{W,\s}^{\mathbb{A},\mathbb{B}}: \ul{HF}^+(Z_1, \s_0; \mathbb{A}) \to \ul{HF}^+(Z_2, \s_1; \mathbb{A}\otimes_{\F[H^1(Z_1)]} \mathbb{K}(W)\otimes_{\mathbb{K}(W)}\mathbb{B} ),
%\]

In the case that $W$ consists of a single 2-handle addition along a knot, the  induced homomorphism is defined by counting holomorphic triangles in a suitable Heegaard triple-diagram. Explicitly, suppose that $Z_i$ are described by pointed Heegaard diagrams $(\Sigma, \AAA, \GGG{i}, w)$ such that $(\Sigma, \AAA, \GGG{1}, \GGG{2}, w)$ is an admissible triple diagram describing $W$ and adapted to the knot in the standard way. Then the Floer chain groups for $Z_i$ are generated over the appropriate coefficient modules by intersection points in $\Ta\cap\Tg{i}$, and $F_W^\mathbb{A}$ is the map induced in homology by the chain map 
\[
F_{W}^\mathbb{A}(U^{-j}\cdot \x)  = \sum_{\y\in\Ta\cap \Tg{2}}\sum_{\psi\in\pi_2(\x,\Theta,\y)} \#\M(\psi)\, \mathcal{A}(\psi)\,U^{n_w(\psi) - j}\cdot \y,
\]
where the sum is over homotopy classes of triangles $\psi$ whose associated moduli space $\M(\psi)$ has dimension 0, and $\Theta\in \Tg{1}\cap\Tg{2}$ is a canonical intersection point. Here $\mathcal{A}: \pi_2(\x,\Theta,\y)\to \mathbb{K}(W)$ is an ``additive assignment'' that we now describe in the situations relevant for us; namely, in the case of a  $2$-handle cobordism associated to a ``zero surgery"  (see \cite{HolDiskFour} for more details, or Section \ref{trianglesec} below). 

Assume that $Z_0$ is the $3$-manifold  resulting from 0-framed surgery along a null-homologous knot in a 3-manifold, $Z$, and that $W$ is the associated cobordism.  The oriented boundary of $W$ is given as $$\partial W= -Z \cup Z_0=-(-Z_0)\cup -Z,$$ indicating that we can view $W$ as a cobordism either from $Z$ to $Z_0$, or from $-Z_0$ to $-Z$.  The latter viewpoint will be more relevant for our purposes.  Note that $\mathbb{K}(W)=\F[H^1(Z_0)]$, so that  any choice of coefficient module $\mathbb{A}$ chosen for $-Z_0$ will induce the module $\mathbb{A}\otimes_{\F[H^1(Z_0)]}\mathbb{K}(W)=\mathbb{A}$ for the Floer homology of $-Z$.  

We will primarily specialize to the case where $\mathbb{A}=\F[C_n]$ is the group algebra over $\F$   on the cyclic  group $C_n$, though we will also use coefficients in the group algebra on $C_n$ over the Novikov field $\Lambda$.  For both, suppose that we are given a  Heegaard triple  diagram compatible with the cobordism as above, so that it contains a curve  representing the $0$-framed longitude.  On this curve we place  a basepoint $p$.   Then for $\psi\in \pi_2(\x, \Theta, \y)$  let $n_p(\partial\psi)$ be the algebraic number of times the boundary of $\psi$ meets the codimension-one submanifold (of the Lagrangian  torus) determined by $p$. Taking coefficients in the module $\F[C_n]$, where $C_n$ is the cyclic group of order $n$, the map induced by $W$ can be written 
\begin{equation}\label{cobordmap}
F^{\F[C_n]}_W(\zeta^k\cdot U^{-j}\cdot\x) = \sum_{\y\in\Ta\cap \Tg{1}}\sum_{\psi\in\pi_2(\x,\Theta,\y)} \#\M(\psi)\, \zeta^{n_p(\partial\psi)+k}\cdot U^{n_w(\psi) - j}\cdot \y
\end{equation}
yielding a map on homology
\[
F^{\F[C_n]}_W: \ul{HF}^+(-Z_0; \F[C_n])\to \ul{HF}^+(-Z; \F[C_n]).
\]
When we view $W$ as a cobordism from $Z$ to $Z_0$, then for any coefficient module $\mathbb{A}$ over $\F[H^1(Z)]$, the induced module over $\F[H^1(Z_0)]$ is
$$  \mathbb{A}\otimes_{\F[H^1(Z)]}\mathbb{K}(W)\cong\mathbb{A}\otimes_{\F[H^1(Z)]}\F[H^1(Z_0)]\cong \mathbb{A}[T,T^{-1}],$$
with isomorphisms  induced by the splitting $H^1(Z_0)\cong H^1(Z)\oplus \Z$, and where the additional variable $T$ corresponding to a generator of the $\Z$ summand.
  For $\F$ coefficients, we thus have a chain map
\begin{equation}\label{cobordmap2}
F^{\F}_W( U^{-j}\cdot\x) = \sum_{\y\in\Ta\cap \Tg{1}}\sum_{\psi\in\pi_2(\x,\Theta,\y)} \#\M(\psi)\, T^{n_p(\partial\psi)}\cdot U^{n_w(\psi) - j}\cdot \y
\end{equation}
which induces a map 
\[
F^{\F}_W: {HF}^+(Z)\to \ul{HF}^+(Z_0; \F[T,T^{-1}]).
\]

We will ultimately need to use the map \eqref{cobordmap} in the case that $-Z_0 = -Y_0(K)$ and $-Z = -(Y_{-1/n}(K)) = (-Y)_{1/n}(K)$, where $K$ is the connected binding of an open book in a $3$-manifold $Y$ supporting a contact structure $\xi$ as above.   Note that $Z_0$ is indeed obtained by zero surgery on a knot in $Z$; namely, the core of the surgery solid torus used to obtain $Z$ as $-1/n$ surgery on $K\subset Y$.  Moreover, this  knot is fibered in $Z$ with  monodromy differing from that of $K$ by $n$ right-handed boundary Dehn twists, and thus it induces the contact structure we had been calling $\xi_n$ on $Z=Y_{-1/n}(K)$.   While this is the application we have in  mind, for the moment we suppress the auxiliary 3-manifold,  fibered knot, and contact structure,  $(Y,\xi,K)$, and simply consider the general  case of a fibered knot $L\subset Z$ inducing a contact structure which we abusively denote by $\xi$, and the associated zero surgery $Z_0$.

We will need a generalization of the Heegaard Floer contact invariant introduced in \cite{Contact} to the situation of twisted coefficients.  This generalization is alluded to in \cite[Remark 4.5]{Contact} and further developed in \cite[Section 4]{GenusBounds}.  The construction associates to a contact structure $\xi$ on $Z$ and any module $\mathbb{A}$ over $\F[H^1(Z)]$, a distinguished submodule:
$$ \ul{c}(\xi;\mathbb{A}):=\iota(\mathbb{A})\subset \ul{HF}^+(-Z;\mathbb{A}).$$
This {\em contact submodule} is generated by the inclusion $\iota$ of the homology of the ``bottommost" non-trivial filtered submodule  of the knot Floer homology of a fibered knot supporting $\xi$ (which is isomorphic to $\mathbb{A}$ by \cite[Proof of Theorem 1.1]{Contact})  into  the Floer homology of $-Z$.   Strictly speaking, the literature only refers to the contact {\em element} in twisted Floer homology, but this does not make sense with coefficients in a general module. 

The contact submodule behaves well with respect to the $2$-handle cobordism described above, a fact which we now make precise.  To state the result, note that there is a canonical \spinc structure $\s_{1-g}$ on $Z_0$ determined by
\begin{itemize}
\item $\s_{1-g}$ is cobordant  through the surgery cobordism  to the \spinc structure on $Z$ determined by the contact structure, \item  If $\Shat$ denotes the fiber of the open book, capped off in $Z_0$, then we have:
\begin{equation}\label{eq:extremal}
\langle c_1(\s_{1-g}), [\Shat]\rangle = 2-2g.
\end{equation}
\end{itemize}

\begin{lemma}\label{fulltwistlemma} Let $L\subset Z$ be a fibered knot with induced contact structure $\xi_L$. If the fiber of $L$ has genus greater than one, then for any module $\mathbb{A}$ over $\F[H^1(Z_0(L))]$ there is an identification 
\[
\ul{HF}^+(-Z_0(L),\s_{1-g};\mathbb{A}) \cong  \mathbb{A}\]
as a trivial $\F[U]$-module, i.e. $U$ acts as zero.  Moreover, $\s_{1-g}$ is the unique $\spinc$-structure satisfying \eqref{eq:extremal} and supporting non-zero Floer homology.   The image of the map 
\[
\ul{F}_{W}^{\mathbb{A}}: \ul{HF}^+(-Z_0(L), \s_{1-g};\mathbb{A})\to \ul{HF}^+(-Z, \s_\xi;\mathbb{A})\]
induced by the 0-surgery cobordism is  the contact submodule $\ul{c}(\xi_L;\mathbb{A})$.

All of the above remains true if the genus of the fiber is one, provided that 
we take coefficients in an algebra over $\Lambda_\omega$, where $\Lambda_\omega$ is the Novikov field viewed as a module over $\F[H^1(Z_0)]$ via a choice of  closed 2-form $\omega$ which evaluates non-trivially on the capped-off fiber.  
\end{lemma}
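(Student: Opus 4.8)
The plan is to prove the lemma by identifying $\ul{HF}^+(-Z_0(L),\s_{1-g};\mathbb{A})$ with the top filtration level of the knot Floer complex of $L$, and then using the established naturality of the contact submodule under $2$-handle cobordisms. First I would recall that since $L\subset Z$ is fibered of genus $g$, the Ozsv\'ath--Szab\'o fibration detection results identify the ``bottommost'' nontrivial subcomplex of $\widehat{HFK}(Z,L)$ (equivalently, the extremal Alexander grading) with a copy of the ground ring, and that a suitable large negative surgery (or the standard adjunction/filtration argument for zero surgery restricted to the extremal \spinc structure) identifies $\ul{HF}^+(-Z_0(L),\s_{1-g};\mathbb{A})$ with $H_*$ of this subcomplex tensored with $\mathbb{A}$. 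The genus hypothesis $g>1$ enters precisely here: the bottom subcomplex in Alexander grading $-g$ is, after the identification, the group $\widehat{HF}$ of a manifold whose relevant piece is trivial as an $\F[U]$-module, so $U$ acts as zero and the result is $\mathbb{A}$; when $g=1$ this bottom piece picks up an extra $U$-tail making it infinite-dimensional, and passing to the Novikov coefficients $\Lambda_\omega$ (with $\omega$ evaluating nontrivially on the capped fiber) kills exactly that tail by the usual argument that a nonexact perturbing $2$-form makes the twisted homology of $S^1\times S^2$-like summands vanish.

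Next I would address uniqueness of $\s_{1-g}$: among \spinc structures on $Z_0(L)$ satisfying the extremal evaluation \eqref{eq:extremal} on $[\Shat]$, the adjunction inequality for $\widehat{HF}^+$ (in its twisted form, which holds with any coefficient module) forces vanishing unless the evaluation lies in the allowed range; combined with the genus-detection statement that $\widehat{HFK}$ in the extremal Alexander grading is one-dimensional, only the single \spinc structure $\s_{1-g}$ supports nonzero Floer homology with the prescribed evaluation. (With $\Lambda_\omega$-coefficients one uses the same adjunction bound, now valid because the perturbation does not affect the degree of the relevant holomorphic disks.)

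Finally, for the statement about the image of $\ul{F}_W^{\mathbb{A}}$ being the contact submodule $\ul{c}(\xi_L;\mathbb{A})$: by the description of the contact submodule recalled just above the lemma, $\ul{c}(\xi_L;\mathbb{A})$ is by definition the image of the inclusion $\iota$ of the homology of the bottommost filtered subcomplex of the knot Floer homology of $L$ into $\ul{HF}^+(-Z;\mathbb{A})$. The map $\ul{F}_W^{\mathbb{A}}$ of the zero-surgery cobordism, when restricted to $\s_{1-g}$, is — at the chain level, via the standard Heegaard triple and the additive assignment of \eqref{cobordmap} — exactly this inclusion of the extremal subcomplex, because the triangle count defining $F_W$ in the top \spinc structure reduces to the identity-type map realizing $\iota$ (this is the twisted-coefficient analogue of the argument in \cite{Contact} identifying the contact element as the image of a generator of the top filtration level under the surgery map). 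So the image is precisely $\ul{c}(\xi_L;\mathbb{A})$.

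I expect the main obstacle to be the last point: making rigorous that $\ul{F}_W^{\mathbb{A}}$ restricted to $\s_{1-g}$ literally realizes the inclusion $\iota$ defining the contact submodule, with the correct bookkeeping of the additive assignment $\mathcal{A}$ and the basepoint $p$ on the longitude. The subtlety, flagged in the paper's own discussion of ``technical issues,'' is that the naturality of the contact submodule under these twisted $2$-handle maps is not literally in the literature (only the untwisted contact \emph{element} is), so one must either invoke the refined exact-triangle/naturality package deferred to Section \ref{trianglesec} or re-derive the relevant chain-homotopy-commutative square relating the twisted surgery map to the filtered inclusion; the genus-one Novikov modification then requires checking that none of these identifications or the adjunction bounds are disturbed by the $\omega$-perturbation, which is routine but must be stated.
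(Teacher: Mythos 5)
Your proposal is essentially the same approach as the paper, and the key insight --- that one should re-run the Ozsv\'ath--Szab\'o argument for the extremal \spinc structure on the zero-surgery with twisted coefficients, and that the zero-surgery cobordism map realizes the inclusion $\iota$ defining the contact submodule --- is exactly what the paper does. The difference is mostly one of directness: the paper bypasses the naturality worry you flag as the ``main obstacle'' by leaning entirely on the explicit Heegaard triple from \cite[Proposition 3.1]{Contact}, in which $CF^+(-Z_0(L),\s_{1-g})$ has exactly two generators $\U,\V$ with $\partial^+(U^{-j}\U)=U^{-j+1}\V$, there is a unique holomorphic triangle computing $F_W(\U)$, and its image is the generator of the knot Floer complex in filtration level $-g$. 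All of these statements are chain-level and combinatorial, so they carry over verbatim with any coefficient module $\mathbb{A}$ --- no general naturality package for twisted contact submodules is needed. Your framing in terms of large surgery / adjunction / filtration machinery is a valid alternative route to the identification $\ul{HF}^+(-Z_0(L),\s_{1-g};\mathbb{A})\cong\mathbb{A}$, but it is heavier than necessary and risks re-introducing the naturality concern you correctly identify; the explicit diagram is what makes the twisted-coefficient extension routine. One small point on the $g=1$ case: you diagnose the obstruction as an infinite $U$-tail coming from the torsion \spinc structure, while the paper phrases it as failure of weak admissibility of the specific diagram (due to the positive periodic domain of the fiber class); these are two sides of the same phenomenon, and the Novikov perturbation $\Lambda_\omega$ with $\omega$ pairing nontrivially with $[\Shat]$ cures both simultaneously, exactly as you expect.
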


%To make sense of the second statement, note that with our choice of $\omega$, the range of the two-handle cobordism map is simply $H_*(CF^+(-Z;\F)\otimes_\F \Lambda)\cong HF^+(-Z)\otimes \Lambda$. This is because the cohomology class specified by $\omega$ on $-Z$ vanishes.  Thus by $c(\xi)$ we mean $c^+(\xi)\otimes 1\in HF^+(-Z)\otimes \Lambda$.  

 In the Novikov twisted case, the primary ground algebra for our purposes is  $\Lambda_\omega[C_n]$, where $\omega$ is Poincar{\'e} dual to the class of the fiber.  We refer to Section \ref{trianglesec} for details regarding Novikov ring coefficients. 

\begin{proof} In \cite[Proposition 3.1]{Contact}, \OS construct a Heegaard triple diagram for the surgery cobordism $W: -Z_0(L)\to -Z$ with the following properties:
\begin{itemize}
\item The diagram is weakly admissible for the unique \spinc structure on $W$ extending $\s_{1-g}$.
\item There are precisely two intersection points $\U$, $\V$ providing generators for the chain complex $CF^+(-Z_0(L), \s_{1-g})$, and the only nontrivial differential is $\partial^+(U^{-j}\U) = U^{-j+1}\cdot\V$. Thus $HF^+(-Z_0(L), \s_{1-g})$ is generated by the homology class of $\U$.
\item There is a unique holomorphic triangle $\psi$ contributing to the image of $\U$ under the chain map $F_W$.
\item The image of $\U$ is a cycle representing the contact invariant $c(\xi)\in HF^+(-Z)$. More precisely, the image of $\U$ is the unique generator for the knot Floer chain complex for $L$, in filtration level $-g$.
\end{itemize}
Using the same diagram for the chain complexes and chain maps with twisted coefficients gives the desired result.   Indeed, all of the statements above remain true with $\mathbb{A}$ replacing the implicit $\F$ coefficients.  Note that the second item in this case establishes an isomorphism  $HF^+(-Z_0(L), \s_{1-g};\mathbb{A})\cong \mathbb{A}$ as a trivial $\F[U]$-module, but generation by $\U$ is ambiguous; in particular, it does {\em not} mean generation as an $\F[H^1(Z_0)]$-module since $\mathbb{A}$ may not even be finitely generated over $\F[H^1(Z_0)]$.   This is, in essence, why one needs to talk about the contact {\em submodule} rather than the contact {\em element} in the most general case.

When $g = 1$, the key difference is  that $c_1(\s_{1-g})$ is torsion and the diagram fails to be admissible.  However, it fails to be admissible  only because of  positivity of the periodic domain corresponding to the homology class of the fiber.  If we take  coefficients in an algebra over $\Lambda_\omega$,  where $\omega$ evaluates non-trivially on this class,  then no admissibility is required for this periodic domain.  \end{proof}

Just as with contact element in untwisted Floer homology, non-vanishing of the contact submodule implies  tightness (c.f. \cite[Theorem 1.4]{Contact}).

\begin{lemma} Suppose a contact structure $\xi$ is overtwisted.  Then the contact submodule is trivial, i.e.  $\ul{c}(\xi;\mathbb{A})\equiv 0$ for any ground module $\mathbb{A}$.
\end{lemma}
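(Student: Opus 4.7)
My approach would mirror the original vanishing argument of \OnS \cite[Theorem 1.4]{Contact} for the untwisted contact invariant, upgrading the chain-level null-homology to arbitrary coefficient modules $\mathbb{A}$ over $\F[H^1(Z)]$. The plan is to exhibit, in a suitably chosen Heegaard diagram, a single holomorphic disc whose boundary is cohomologically trivial and whose output is the canonical cycle representing the contact class; linearity over $\mathbb{A}$ then kills the entire submodule.

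First, reduce to a distinguished open book. Since $\xi$ is overtwisted, results of Honda--Kazez--Matic (via the Giroux correspondence) furnish a supporting open book $(S,\phi)$ which is a negative stabilization of another open book. In the Heegaard diagram adapted to this open book constructed in \cite[Theorem 4.2]{Contact}, one obtains an explicit pair of intersection points $\x$, $\c$ together with a small holomorphic bigon $\phi_0$, disjoint from the basepoint $w$ (so $n_w(\phi_0)=0$), whose unique rigid representative produces the chain-level identity $\partial\x = \c$. Here $\c$ is the canonical cycle representing $c(\xi)\in HF^+(-Z)$; equivalently, it is the image under $\iota$ of the generator of the bottom filtration level of the knot Floer complex of a fibered knot supporting $\xi$.

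Second, lift this null-homology to twisted coefficients. The bigon $\phi_0$ can be taken to lie in an arbitrarily small neighborhood of the stabilization region, so that $\partial\phi_0 \subset \Sigma$ is supported in a topological disc of the Heegaard surface and in particular bounds there. Consequently, the additive assignment $\mathcal{A}(\phi_0)$ appearing in the twisted differential (as in the discussion preceding \eqref{cobordmap}) is the identity element $1\in \F[H^1(Z)]$. Therefore the relation $\partial\x = \c$ persists verbatim in the twisted chain complex $\ul{CF}^+(-Z;\mathbb{A})$ for every coefficient module $\mathbb{A}$. In the $g = 1$ case, where $\omega$-perturbed coefficients over $\Lambda_\omega$ are required by Lemma \ref{fulltwistlemma}, the same disc works since $\omega$ is evaluated on homotopy classes of triangles and $\phi_0$ has vanishing $\omega$-area (being supported in a small ball).

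Finally, conclude vanishing. By Lemma \ref{fulltwistlemma}, the contact submodule $\ul{c}(\xi;\mathbb{A}) \subset \ul{HF}^+(-Z;\mathbb{A})$ is the $\mathbb{A}$-module generated by the image of $\c$ under the $2$-handle cobordism map; by $\mathbb{A}$-linearity of both this cobordism chain map and the differential $\partial$, the whole submodule dies as soon as $\c$ itself bounds. The step I would scrutinize most carefully is the assertion that $\mathcal{A}(\phi_0) = 1$: this requires that \OnS's destabilizing disc in the negatively stabilized open book lie in a topological ball disjoint from every homologically essential curve of the diagram. This localization is essentially built into their construction, but it is the precise ingredient that allows the untwisted vanishing argument to survive passage to $\F[H^1(Z)]$-module coefficients, and hence deserves explicit verification.
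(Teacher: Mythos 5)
Your proposal takes a genuinely different route from the paper's. The paper's proof follows Ozsv\'ath--Szab\'o's proof of \cite[Theorem 1.4]{Contact}: by Giroux, $\xi$ is supported by a negatively stabilized open book; negative stabilization is Murasugi sum with a negative Hopf band, i.e.\ fibered connected sum of the binding with the left-handed trefoil; one then invokes the filtered K\"unneth formula for knot Floer complexes of connected sums, together with vanishing of the contact submodule for the left-handed trefoil on $S^3$. The only points that need checking in the twisted setting are exactly the ones the paper names: the filtered K\"unneth theorem holds with arbitrary coefficient modules, and the left-handed trefoil computation over any ground module reduces via universal coefficients to the $\F$-coefficient case because $\F[H^1(S^3)]=\F$, so on $S^3$ the twisting is trivially absent.

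Your proposal bypasses the K\"unneth reduction and tries to exhibit a null-homotopy directly in a Heegaard diagram for $Y$. There is a genuine gap. You assert a single holomorphic bigon $\phi_0$ yielding the exact chain-level identity $\partial\x = \mathbf{c}$ for the contact generator $\mathbf{c}$, citing \cite[Theorem 4.2]{Contact}; but that reference does not construct such a bigon, and the claim that $\partial\x$ has \emph{no other terms} is not automatic. A priori $\partial\x = \mathbf{c} + z$ for some $z$, and $[\mathbf{c}] = [z]$ does not give $[\mathbf{c}]=0$ unless $z$ is controlled (for instance by a careful Alexander-filtration argument, which you would then have to supply). This is exactly what the K\"unneth reduction sidesteps by pushing the entire computation down to the explicit genus-one diagram for the left-handed trefoil in $S^3$. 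Your observations that $\mathcal{A}(\phi_0)=1$ because $\partial\phi_0$ is supported in a ball, and that the $\omega$-area vanishes in the Novikov-twisted case, are sensible ways to handle the twisting \emph{if} the underlying untwisted null-homotopy were established, but they do not repair that gap; you should either complete the filtered argument and justify the existence and uniqueness of the bigon in your chosen diagram, or take the K\"unneth route the paper actually uses.
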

\begin{proof} This follows in exactly the same manner as \cite[Proof of Theorem 1.4]{Contact}, noting only that the K{\"u}nneth theorem for the knot Floer filtration of the connected sums of knots holds with arbitrary coefficient modules and that the contact submodule associated to the overtwisted contact structure induced by the left-handed trefoil is trivial over any ground module (this latter fact can be calculated directly or deduced from the universal coefficients theorem and the fact that $\F[H^1(S^3)]=\F$).
\end{proof}

The preceding two lemmas yield an immediate corollary.

\begin{cor} In the situation of Lemma \ref{fulltwistlemma}, and for $g\geq 2$, if the map
\[
F^{\F[C_n]}_W: \ul{HF}^+(-Z_0(K), \s_{1-g}; \F[C_n])\to \ul{HF}^+(-Z, \s_\xi; \F[C_n])
\]
is nonzero, then $\xi_L$ is tight. If $g=1$ and the map 
\[
F^{\Lambda_\omega[C_n]}_{W}: \ul{HF}^+(-Z_0(K), \s_{1-g}; \Lambda_\omega[C_n])\to \ul{HF}^+(-Z, \s_\xi; \Lambda_\omega[C_n])
\]
is nonzero, then $\xi_L$ is tight. \qed
\end{cor}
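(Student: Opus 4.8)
The plan is to obtain the corollary as a formal consequence of Lemma~\ref{fulltwistlemma} together with the lemma immediately preceding it, using only the fact that a contact structure which is not overtwisted is tight. I would argue by contraposition: assuming the relevant cobordism map is nonzero, deduce that the contact submodule of $\xi_L$ is nonzero, and then apply the overtwisted lemma to conclude that $\xi_L$ cannot be overtwisted.

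First I would handle the case $g\geq 2$, with ground module $\mathbb{A}=\F[C_n]$ (a module over $\F[H^1(Z)]$ via the coefficient system attached to the basepoint $p$ on the $0$-framed longitude). Suppose the induced map
\[
F^{\F[C_n]}_W\colon \ul{HF}^+(-Z_0(L),\s_{1-g};\F[C_n])\longrightarrow \ul{HF}^+(-Z,\s_\xi;\F[C_n])
\]
is nonzero. Then its image is a nonzero submodule of $\ul{HF}^+(-Z,\s_\xi;\F[C_n])$. By Lemma~\ref{fulltwistlemma} this image equals the contact submodule $\ul{c}(\xi_L;\F[C_n])$, so $\ul{c}(\xi_L;\F[C_n])\not\equiv 0$. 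Were $\xi_L$ overtwisted, the preceding lemma would force $\ul{c}(\xi_L;\F[C_n])\equiv 0$, a contradiction; hence $\xi_L$ is tight.

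For genus one the argument is verbatim the same once $\F[C_n]$ is replaced by $\Lambda_\omega[C_n]$, with $\omega$ Poincar\'e dual to the fiber class, provided both inputs survive this change of coefficients. The identification $\ul{HF}^+(-Z_0(L),\s_{1-g};\mathbb{A})\cong\mathbb{A}$ and the identification of the image of $\ul{F}_W^{\mathbb{A}}$ with $\ul{c}(\xi_L;\mathbb{A})$ are exactly the content of the last paragraph of Lemma~\ref{fulltwistlemma}, where passing to $\Lambda_\omega$-algebra coefficients is precisely what repairs the admissibility failure caused by $c_1(\s_{1-g})$ being torsion. The overtwisted lemma applies to $\Lambda_\omega[C_n]$ because its proof is written for an arbitrary ground module: the K\"unneth theorem for the knot Floer filtration and the triviality of the contact submodule of the left-handed-trefoil open book both hold over any coefficient module. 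With these two facts in hand, the chain ``nonzero map, hence nonzero image, hence nonzero contact submodule, hence not overtwisted, hence tight'' runs exactly as in the $g\geq 2$ case.

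I do not anticipate a real obstacle here: all of the substance is carried by the two preceding lemmas, and the corollary is essentially a restatement of their combination. The only points needing care are bookkeeping ones --- confirming that $\Lambda_\omega[C_n]$ legitimately counts as a ``ground module'' for both lemmas when $g=1$, and remembering that the symbol $\s_\xi$ appearing in the target of the cobordism map is the spin$^c$ structure $\s_{\xi_L}$ determined by the contact structure which $L\subset Z$ induces on $Z$ (denoted simply $\xi$ in this portion of the paper).
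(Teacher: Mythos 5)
Your proof is correct and takes exactly the approach the paper intends: the paper presents this as an immediate consequence of Lemma~\ref{fulltwistlemma} (which identifies the image of the cobordism map with the contact submodule) together with the preceding lemma (vanishing of the contact submodule for overtwisted structures), and your argument spells out precisely that combination, including the $\Lambda_\omega[C_n]$ adjustment for $g=1$.
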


We now return to the surgery exact triangle. In this case, the manifold $Z$ above becomes $Y_{-1/n}(K)$, while $Z_0 = Y_0(K)$. We have:

\begin{prop}\label{tightprop} Let $n>0$, and suppose the genus of the fiber is greater than one. If, for the map $$F: \ul{HF}^+(-Y_0(K), \F[C_n])\to HF^+(-Y_{-1/n}(K); \F)$$ appearing in the surgery triangle, the restriction to the summand corresponding to $\s_{1-g}$ is nontrivial, then the contact structure $\xi_n$ on $Y_{-1/n}$ is tight.   The  same is true if the genus of the fiber is one, provided we consider the surgery triangle  with coefficients in the  Novikov module $\Lambda_\omega$ associated to a $2$-form evaluating non-trivially on the fiber.
\end{prop}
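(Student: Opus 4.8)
The plan is to deduce the proposition from Lemma~\ref{fulltwistlemma} and the two lemmas that follow it (equivalently, from the Corollary to Lemma~\ref{fulltwistlemma}), once the relevant component of the triangle map $F$ has been identified with the twisted $2$-handle cobordism map studied there. I would first invoke the geometric set-up already in place: the knot $L\subset Y_{-1/n}(K)$ given by the core of the surgery solid torus is fibered with monodromy $\phi\circ t^n_{\partial S}$, so the contact structure $\xi_L$ it carries is exactly $\xi_n$, and zero surgery on $L$ returns $Y_0(K)$, so $Z_0(L)=Y_0(K)$ and the $2$-handle cobordism $W\colon -Y_0(K)\to -Y_{-1/n}(K)$ of Lemma~\ref{fulltwistlemma} is, up to the canonical identifications, the cobordism carrying $F$. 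The key steps, in order, are then: (i) relate the restriction of $F$ to the $\s_{1-g}$-summand to the twisted cobordism map $F^{\F[C_n]}_W$ of \eqref{cobordmap} restricted to the same summand, in such a way that nonvanishing of the former forces nonvanishing of the latter; (ii) apply Lemma~\ref{fulltwistlemma} to conclude that the image of $F^{\F[C_n]}_W$ on the $\s_{1-g}$-summand is the contact submodule $\ul{c}(\xi_n;\F[C_n])$, which is therefore nonzero; (iii) invoke the lemma immediately after Lemma~\ref{fulltwistlemma} --- an overtwisted contact structure has trivial contact submodule --- to conclude that $\xi_n$ is tight.

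Step (i) is the only part requiring real input, and I would take it from Section~\ref{trianglesec}. There the surgery exact triangle is built from the twisted Heegaard Floer complexes of the three Dehn fillings via the exact-triangle detection lemma, with each of its three maps realized as a sum, over the $\spinc$ structures on the corresponding $2$-handle cobordism, of holomorphic-triangle maps of the form \eqref{cobordmap}--\eqref{cobordmap2}. Precomposing $F$ with the inclusion of the $\s_{1-g}$-summand then selects exactly the $\spinc$ structures on $W$ restricting to $\s_{1-g}$ on $-Y_0(K)$, and since (by Lemma~\ref{fulltwistlemma}) $\s_{1-g}$ is the unique $\spinc$ structure on $Y_0(K)$ with $\langle c_1,[\Shat]\rangle=2-2g$ carrying nonzero twisted Floer homology, this is precisely the map whose image that lemma computes. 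The remaining bookkeeping is that $F$ is recorded in the triangle with untwisted $\F$-coefficients on $-Y_{-1/n}(K)$ whereas $F^{\F[C_n]}_W$ lands a priori in the twisted group; the two are intertwined by the coefficient-change surjection $\F[C_n]\to\F$ (essentially the augmentation) that reconciles the coefficient conventions of the triangle, and since a surjection of ground rings sends nonzero maps to nonzero maps, nonvanishing of $F$ on the $\s_{1-g}$-summand does force nonvanishing of $F^{\F[C_n]}_W$ there, closing step (i).

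When the page has genus one, $\Shat$ is a torus, $c_1(\s_{1-g})=c_1(\s_0)$ is torsion, and the Heegaard triple used in Lemma~\ref{fulltwistlemma} fails weak admissibility because of the positive periodic domain carrying the fiber class; exactly as in the proof of that lemma and in the construction of the triangle, one repairs this by working over the ground algebra $\Lambda_\omega[C_n]$ with $\omega$ Poincar\'e dual to the fiber, after which (i)--(iii) go through verbatim using the genus-one clauses of Lemma~\ref{fulltwistlemma} and of its Corollary together with the $\Lambda_\omega$-version of the exact triangle. I expect the real obstacle to be step (i): making the maps in the surgery triangle precise enough to recognize $F$ on the nose via the triangle-count \eqref{cobordmap}, and to match its $\spinc$ decomposition and coefficient bookkeeping with those of Lemma~\ref{fulltwistlemma}. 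That is why the relevant discussion is isolated in Section~\ref{trianglesec}; granting it, the argument above is essentially a repackaging of Lemma~\ref{fulltwistlemma} and its Corollary.
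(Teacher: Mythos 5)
Your overall plan is the paper's: identify the relevant restriction of $F$ with the twisted $2$-handle cobordism map $F^{\F[C_n]}_W$, apply Lemma~\ref{fulltwistlemma} together with the Corollary that nonvanishing of $F^{\F[C_n]}_W$ forces tightness, and conclude. But the justification you give for the key implication in step~(i) is not right as stated, on two counts. First, the coefficient map $\Pi:\F[C_n]\to\F$ that mediates between the two is not the augmentation $\zeta\mapsto 1$: it is the $\F$-linear projection onto the identity coefficient, $p(\zeta)\mapsto p(0)$ (and it is not a ring homomorphism at all, only $\F$-linear, as is the norm map $\N:1\mapsto\sum\zeta^k$ in the other direction). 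Second, the principle "a surjection of ground rings sends nonzero maps to nonzero maps" is false in general and, more to the point, would give the \emph{wrong direction} of implication even if it were true (it would yield $F^{\F[C_n]}_W\neq 0 \Rightarrow F\neq 0$, not what you need). The correct argument, which is what the paper records in its commutative diagram, is that comparing \eqref{f2def} with \eqref{cobordmap} shows $F=\Pi\circ F^{\F[C_n]}_W$ as chain maps; from this, if $F^{\F[C_n]}_W$ induced the zero map on the $\s_{1-g}$-summand so would $F$, so nonvanishing of $F$ there forces nonvanishing of $F^{\F[C_n]}_W$. With that repair your steps~(ii) and~(iii) go through exactly as in the paper, and the genus-one clause with $\Lambda_\omega$ coefficients is handled just as you say.
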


\begin{proof} In Section \ref{trianglesec} the map $F$ appearing in the surgery triangle is defined on the chain level by
\[
F(U^{-j}\zeta^k\x):= \sum_{\stackrel{\scriptstyle\psi\in \pi_2(\x,\Theta,\y)}{\stackrel{{\mu(\psi)=0}}{{{n_p(\partial\psi)=-k \ \text{mod} \ n}}}}}\#{\mathcal{M}}(\psi)\cdot t^{\omega(\psi)}\cdot U^{n_w(\psi)-j}\cdot\y
\]
(c.f. equation \eqref{f2def} below). Here $t$ is the variable appearing in the Novikov ring $\Lambda_\omega$, which we set equal to 1 if the genus of the fiber is at least two. Comparing this to the definition of the cobordism-induced homomorphism $F^{\F[C_n]}_W$ in \eqref{cobordmap} above, it is easy to check that there is a commutative diagram
\[
\begin{diagram}
\underline{CF}^+(-Y_0; \F[C_n]) & \rTo^{{F}_W^{\F[C_n]}} & \underline{CF}^+(-Y_{-1/n}; \F[C_n]) \\
\uTo^{\mathcal N} & \rdTo^{F} & \dTo^{\Pi}\\
CF^+(-Y_0; \F) &\rTo^{F_W}& CF^+(-Y_{-1/n}; \F)
\end{diagram}
\]
where $\mathcal N$ and $\Pi$ are chain maps induced by the coefficient $\F$-homomorphisms $\N: \F\to \F[C_n]$ and $\Pi: \F[C_n]\to \F$ given by
\[
\N(1) = \sum_{k=0}^{n-1} \zeta^k \quad\mbox{and}\quad \Pi(p(\zeta)) = p(0),
\]
where $p(\zeta)$ denotes a polynomial in $\zeta$. In particular, we see that if $F$ induces a nontrivial map in homology (in a particular \spinc structure), then so does ${F}_W^{\F[C_n]}$. An analogous diagram exists with $\Lambda_\omega$ replacing $\F$ throughout.   The result follows now from the previous corollary.

\end{proof}

Our main results (Theorems \ref{thm1} and \ref{thm2}) now follow easily. We give a combined restatement.

\begin{theorem}\label{basicthm} Let $Y$ be a closed oriented 3-manifold. Then there exists a constant $N_Y\geq 0$ with the following property. Let $K\subset Y$ be a fibered knot with monodromy $\phi_K$, and let $\tau_K = \tau(\phi_K)$ be the fractional Dehn twist coefficient of $\phi_K$.  Then 
\begin{enumerate}
\item For all $n> N_Y$, the contact structure $\xi_n$ supported by the open book with monodromy obtained from $\phi_K$ by composition with $n$ boundary-parallel Dehn twists is tight.
\item We have the bound
\[
|\tau_K| \leq N_Y + 1.
\]
\end{enumerate}
Moreover, we can take 
\begin{equation}\label{Ndef}
N_Y = {\textstyle 1\over 2}(\mathrm{dim}_\F\widehat{HF}(Y) - |\mathrm{Tor}_\zee \ H_1(Y;\zee)|).
\end{equation}
\end{theorem}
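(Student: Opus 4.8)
The plan is to assemble the pieces already developed into a proof of Theorem~\ref{basicthm}, the combined restatement. First I would recall the reduction: by property $(3)$ of the fractional Dehn twist coefficient and the Honda--Kazez--Mati{\'c} theorem that a tight contact structure has nonnegative twist coefficient, statement $(2)$ follows from statement $(1)$. Indeed, if $\xi_n$ is tight for all $n>N_Y$, then $\tau(\phi_K\circ t_\partial^n) = \tau(\phi_K) + n \geq 0$, so $\tau(\phi_K)\geq -N_Y - 1$ (taking $n = N_Y+1$); applying the same argument to $-Y$, whose open books correspond to those of $Y$ with inverted monodromy, and using $\tau(\phi^{-1}) = -\tau(\phi)$, gives $\tau(\phi_K)\leq N_Y+1$, at the cost of replacing $N_Y$ by $\max(N_Y, N_{-Y})$. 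Since the formula \eqref{Ndef} is symmetric in orientation reversal (the ranks of $\widehat{HF}(Y)$ and $\widehat{HF}(-Y)$ agree, as does the torsion in $H_1$), this causes no trouble. So the work is entirely in statement $(1)$.

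For statement $(1)$, the argument is the five-step strategy outlined in \S\ref{subsec:proof}, now fully supported by Proposition~\ref{tightprop}, Lemma~\ref{fulltwistlemma}, and the surgery exact triangle. By Proposition~\ref{tightprop}, to show $\xi_n$ is tight it suffices to show that the restriction of the triangle map $F\colon \ul{HF}^+(-Y_0(K);\F[C_n])\to HF^+(-Y_{-1/n}(K);\F)$ (or its Novikov variant when $g=1$) to the summand $\s_{1-g}$ is nontrivial. I would run this through the surgery exact triangle: exactness at $HF^+(-Y)$ says that the image of $F$ restricted to $\s_{1-g}$ is the kernel of the map $G\colon HF^+(-Y;\F)\to \ul{HF}^+(-Y_0(K);\F[C_n])$ landing in that summand. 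One identifies $G$ (restricted appropriately) with the relevant $U$-action-type map; the key algebraic point, exactly as in \OnS's $L$-space argument, is that on the summand $\s_{1-g}$ the composite around the triangle behaves like multiplication by $U$, and by Lemma~\ref{fulltwistlemma} the source group $\ul{HF}^+(-Y_0(K),\s_{1-g};\F[C_n])$ is isomorphic to $\F[C_n]$ as a trivial $\F[U]$-module, hence $\F$-dimension exactly $n$. So $F|_{\s_{1-g}}$ is nonzero as long as $n$ exceeds $\dim_\F \coker\bigl(U\colon HF^+(-Y)\to HF^+(-Y)\bigr)$; more precisely, a rank count in the exact sequence forces $F|_{\s_{1-g}}\neq 0$ once $n > \dim_\F \coker U$.

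The final step is to identify $\dim_\F \coker\bigl(U\colon HF^+(-Y)\to HF^+(-Y)\bigr)$ with $N_Y$ as in \eqref{Ndef}. Here I would use the standard structural facts about $HF^+$: for each \spinc structure with torsion Chern class, $HF^+(-Y,\s)$ is the sum of one ``tower'' $\T^+ = \F[U,U^{-1}]/U\cdot\F[U]$ and a finite-dimensional $U$-module $HF^+_{red}(-Y,\s)$; for non-torsion \spinc structures $HF^+(-Y,\s)$ is finite-dimensional. The cokernel of $U$ acting on the tower is one-dimensional, the cokernel of $U$ on a finite-dimensional piece of $\F$-dimension $d$ is also $d$-dimensional (since $U$ is nilpotent there), and summing over \spinc structures gives $\dim_\F\coker U = |\mathrm{Tor}_\zee H_1(Y;\zee)| + \dim_\F HF^+_{red}(-Y)$ when $b_1=0$ — and more generally one uses the analogous count. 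Relating this to $\widehat{HF}$: the exact triangle relating $\widehat{HF}$, $HF^+$, and $HF^+$ (via $U$) gives $\dim_\F \widehat{HF}(Y) = \dim_\F\ker U + \dim_\F\coker U$ on $HF^+$, and since $\ker U$ and $\coker U$ have the same dimension in each \spinc sector (the tower contributes $1$ to each, the reduced part contributes equally by nilpotence), one gets $\dim_\F\coker U = \tfrac12\dim_\F\widehat{HF}(Y) - \tfrac12|\mathrm{Tor}_\zee H_1(Y;\zee)|$ after accounting for the fact that only torsion \spinc structures carry a tower. Hence $N_Y$ as defined works. I expect the main obstacle to be this last bookkeeping — correctly matching the contributions of towers versus reduced parts across torsion and non-torsion \spinc structures so that the cokernel count comes out to exactly the stated formula — together with making sure the $g=1$ Novikov-coefficient version of the rank count in Lemma~\ref{fulltwistlemma} still yields an $n$-dimensional source, which it does since $\Lambda_\omega[C_n]$ has $\Lambda_\omega$-dimension $n$.
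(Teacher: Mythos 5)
Your proposal faithfully reproduces the paper's strategy: reduce statement $(2)$ to $(1)$ via Honda--Kazez--Mati\'c together with orientation reversal, then deduce $(1)$ from the surgery exact triangle using Lemma~\ref{fulltwistlemma}, Proposition~\ref{tightprop}, and a rank count. That said, several of your supporting statements are off and worth fixing. The exactness you need is at the twisted $-Y_0(K)$ term of the triangle (so that $\ker F = \operatorname{im} G$), not at $HF^+(-Y)$; and the mechanism is not that ``the composite around the triangle behaves like $U$'' but rather that $G$ restricted to the $\s_{1-g}$ component factors through $\coker U$, because $U$ acts trivially on the target $\F[C_n]$ and $G$ is $U$-equivariant.

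More substantively, your bookkeeping for the tower is wrong: on $\T^+ = \F[U,U^{-1}]/U\cdot\F[U]$ multiplication by $U$ has one-dimensional kernel and \emph{zero}-dimensional cokernel, not ``one of each,'' and a nilpotent operator on a $d$-dimensional space does not in general have $d$-dimensional cokernel. It is precisely the asymmetry $\dim\ker U - \dim\coker U = k_\s$ (where $k_\s$ is the $\F[U,U^{-1}]$-rank of $HF^\infty(Y,\s)$) that produces the $-|\mathrm{Tor}_\zee H_1(Y;\zee)|$ term, since $\dim\widehat{HF}(Y,\s) = \dim\ker U + \dim\coker U = 2\dim\coker U + k_\s$. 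Relatedly, $\dim\coker U = N_Y$ exactly only when $Y$ is a rational homology sphere; for $b_1(Y)>0$ one has $k_\s\geq 2$ on torsion summands and hence only the inequality $\dim\coker U\leq N_Y$, which is all the argument needs. With these corrections your proof agrees with the paper's.
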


Here $\widehat{HF}(Y)$ indicates the sum of Heegaard Floer groups over all \spinc structures on $Y$, while $|\mathrm{Tor}_\zee\ H_1(Y;\zee)|$ is the order of the torsion subgroup of the first homology.

\begin{proof}
We treat  the case that the fiber genus is at least two explicitly; the genus one case is exactly the same, using coefficients in $\Lambda_\omega$, where $\omega$ is Poincar{\'e} dual to the meridian of the binding.  Strictly speaking, this latter argument produces \eqref{Ndef} with $\mathrm{dim}_\Lambda \widehat{HF}(Y;\Lambda_\omega)$ in place of $\mathrm{dim}_\F\widehat{HF}(Y)$.  These dimensions are equal, however, since $\widehat{HF}(Y;\Lambda_\omega)\cong \widehat{HF}(Y)\otimes_\F \Lambda$ by the universal coefficient theorem and the fact that $[\omega]=0\in H^2(Y;\R)$.  

Proceeding, then, consider the surgery exact triangle:

\begin{tikzpicture}[>=latex] 
\matrix (m) [matrix of math nodes, row sep=1em,column sep=1em]
{ HF^+(-Y_{-1/n}(K) ; \F)& &  {HF}^+(-Y; \F) ,\\
& & \\
&  \underline{HF}^+(-Y_0(K); \F[C_n])& \\  };

\path[->,font=\scriptsize]
(m-1-1) edge[->]   (m-1-3)
(m-1-3) edge[->] node[below] {$\ \ \ \ G$}(m-3-2)
(m-3-2) edge[->] node[below] {$F\ \ \ \ $} (m-1-1);
\end{tikzpicture}

\noindent The summand of the bottom module corresponding to $\s_{1-g}$ is, according to Lemma \ref{fulltwistlemma}, isomorphic to $\F[C_n]$ and therefore of dimension $n$ over $\F$. By Lemma \ref{fulltwistlemma} again and $U$-equivariance of the sequence, the component of $G$ mapping into the $\s_{1-g}$ summand factors through the cokernel of the action of $U$ on $HF^+(-Y;\F)$, which is finite-dimensional and independent of $n$. Hence for $n$ sufficiently large, we conclude $F$ is nonzero and therefore the contact structure $\xi_n$ on $Y_{-1/n}$ is tight by Proposition \ref{tightprop}. 

To estimate the size of $n$ required, observe that it suffices that $n$ be larger than the dimension of the cokernel of $U$, acting on $HF^+(-Y)$. In a given \spinc structure it is easy to see that $\dim \widehat{HF}(Y,\s) = \dim \ker U + \dim \coker U = 2\dim \coker U + k_\s$, where $k_\s$ is the rank of $HF^\infty(Y,\s)$ as a module over $\F[U,U^{-1}]$.
%\marginpar{this isn't quite right} the composition of the maps $p$ and $h$ of the natural long exact sequences:
%$$...\longrightarrow HF^-\longrightarrow HF^\infty\overset{p}\longrightarrow HF^+\longrightarrow...$$ $$...\longrightarrow HF^+\overset{U}\longrightarrow HF^+\overset{h}\longrightarrow \widehat{HF}\longrightarrow... .$$ respectively.  
Note that $k_\s$ is $0$ if $\s$ is non-torsion, and at least 1 in the torsion case (c.f. \cite[Theorem 10.1]{HolDiskTwo} and \cite[Lemma 2.3]{OSSymp}). Adding over all \spinc structures, if we set $N_Y = {1\over 2}(\dim \widehat{HF}(Y) - |\mathrm{Tor}_\zee H_1(Y)|)$ it follows that adding at least $N_Y+1$ right twists to the monodromy of any open book with connected binding will produce a tight contact structure.

According to Honda-Kazez-Matic  \cite[Theorem 1.1 and Propositions 3.1, 3.2]{HKMVeer1}, the fractional Dehn twist coefficient of the monodromy of an open book supporting a tight contact structure is nonnegative. Since we added $N_Y+1$ right twists, the new monodromy is $\phi_K \circ t_{\partial}^{N_Y+1}$. Hence
\[
0\leq \tau(\phi_K \circ t_{\partial}^{N_Y+1}) = \tau_K + N_Y +1,
\]
which gives half the desired inequality. For the other half, replace $\phi_K$ by $\phi_K^{-1}$. This amounts to reversing the orientation on $Y$, giving a lower bound on $\tau(\phi_K^{-1}) = - \tau(\phi_K)$ in terms of $N_{-Y}$. But since \eqref{Ndef} is insensitive to the orientation of $Y$ (c.f. \cite[Proposition 2.5]{HolDiskTwo}), the result follows.
\end{proof}

\begin{remark}\label{Nestimateremark}: We could, by the argument in the proof, take 
\[
N_Y = {1\over 2} \max_\s (\dim \widehat{HF}(Y,\s) - k_\s),
\]
which gives \eqref{Nestimate2} in the case that $Y$ is a rational homology sphere. It follows from the main result of \cite{triplecups} together with work of Lidman \cite{lidman} that if $\s$ is a torsion \spinc structure on a 3-manifold with positive first Betti number, there is an estimate
\[
k_\s\geq L_Y :=\left\{ \begin{array}{ll} 2\cdot 3^{(b_1(Y) -1)/2} & \mbox{if $b_1(Y)$ is odd} \\ 4\cdot 3^{b_1(Y)/2 - 1} & \mbox{if $b_1(Y)$ is even.}\end{array}\right.
\]
Thus, letting $L(\s) = L_Y$ if $\s$ is torsion and $L(\s)= 0$ otherwise,  we can take
\[
N_Y = {1\over 2}\max_\s( \dim\widehat{HF}(Y,\s) - L(\s)).
\]
\end{remark}

\begin{remark} If $Y$ is an $L$-space, meaning that $Y$ is a rational homology sphere with $\dim \widehat{HF}(Y) = |H_1(Y;\zee)|$, then the theorem says $|\tau(\phi_K)| \leq 1$ for any fibered knot $K$ in $Y$. In fact, we must have
\[
|\tau(\phi_K)| < 1 \quad\mbox{for any fibered $K$ in an $L$-space}.
\]
Indeed, if $K\subset Y$ has $|\tau(\phi_K)|\geq 1$ then $Y$ admits a taut foliation, according to \cite[Theorem 1.1]{HKMVeer2}.  On the other hand,  $L$-spaces do not admit taut foliations by \cite[Theorem 1.4]{GenusBounds}. Note that while $|\tau(K)|\leq 1/2$ for knots in $S^3$, this is not true for knots in arbitrary $L$-spaces.

\end{remark}

In a similar spirit, we turn to the proof of Corollary \ref{cor:fiberedgrowth}:

\begin{proof}[Proof of Corollary \ref{cor:fiberedgrowth}]  Suppose  that $K$ is a fibered knot in a 3-manifold $Y$ with monodromy $\phi_K$.  Then the $n$-fold cyclic branched cover $\Sigma_n(K)$ is well-defined (in general, it depends on a homomorphism of the knot group to $\Z/n\Z$, but this is specified by counting intersections with the fiber) and has an open book decomposition with the same fiber and monodromy $\phi_K^n$.  The proof of Theorem \ref{basicthm} shows that $$|\tau(\phi_K^n)| \le 1+\dim_\F \mathrm{Coker} \ U : HF^+(\Sigma_n)\rightarrow HF^{+}(\Sigma_n).$$
Homogeneity of the twist coefficient shows that the left-hand side equals $n\cdot |\tau(\phi_K))|$ whereas $1+\dim HF^{red}(\Sigma_n)$ is at least as large as the right-hand side, since reduced Floer homology is defined as the limit (see \cite[Definition 4.7]{HolDisk}) $$ HF^{red}:= \underset{k\rightarrow \infty}{\mathrm{lim}} \mathrm{Coker}\ U^k.$$ \end{proof}

\section{Application to Braids}\label{braidsec}

Fractional Dehn twist coefficients can be defined in various contexts; for example Malyutin \cite{Malyutin} gave a definition of a ``twist number'' for closed braids by considering a braid as an element of the mapping class group of a punctured disk (see also Ito-Kawamuro \cite{ItoKawamuro}). For a braid $\beta$ write $\tau(\beta)\in \cue$ for the twist number; note that while $\tau(\beta)$ is conjugation-invariant and so depends only on the closure $\hat{\beta}$, it is not an invariant of the link type of $\hat{\beta}$. That is to say, different closed braid representatives of a given link will have different twist numbers.

The following application of Theorem \ref{basicthm} was pointed out to us by John Baldwin and Liam Watson.

\begin{theorem}\label{braidthm} Let $L$ be a link in $S^3$, and let $\hat{\beta}$ be any closed braid isotopic to $L$ and having an odd number of strands. Then
\[
|\tau({\beta})| \leq \dim_\F \widetilde{Kh}(-L) - |\det(L)| + 2.
\]
\end{theorem}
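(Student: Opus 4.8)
The plan is to reduce Theorem~\ref{braidthm} to Theorem~\ref{basicthm} by passing to the branched double cover. Given a closed braid $\hat\beta$ on $n=2k+1$ strands with closure $L\subset S^3$, the double cover of $S^3$ branched over $L$, call it $Y=\Sigma_2(L)$, fibers over $S^1$: the preimage of the fiber disk with $n$ marked points in the obvious open book for the braid is a surface $S$ with connected boundary (connectedness of the binding is exactly where oddness of $n$ enters, since the branched double cover of a disk over an odd number of points has one boundary circle), and the monodromy of this open book is the lift $\tilde\phi_\beta$ of $\beta$ to $S$. I would first record the standard fact that $\tau(\tilde\phi_\beta)=\tau(\beta)$; this is the Birman--Hilden/equivariance statement that fractional Dehn twist coefficients behave well under branched covers, or can be seen directly from the ``winding number at infinity'' or translation-number description of $\tau$ since the covering of the boundary annulus is trivial in the odd-strand case. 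Granting this, Theorem~\ref{basicthm} applied to the fibered knot $\partial S\subset Y$ gives $|\tau(\beta)|=|\tau(\tilde\phi_\beta)|\le N_Y+1$.

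The second ingredient is the arithmetic relating $N_Y$ to Khovanov homology. By the Ozsv\'ath--Szab\'o spectral sequence from reduced Khovanov homology of the mirror to $\widehat{HF}$ of the branched double cover, $\dim_\F\widehat{HF}(\Sigma_2(L))\le \dim_\F\widetilde{Kh}(-L)$ (working over $\F=\Z/2$, where the spectral sequence is available and the branched double cover orientation conventions match). Meanwhile $\Sigma_2(L)$ is a rational homology sphere with $|H_1(\Sigma_2(L);\Z)|=|\det(L)|$, so $|\mathrm{Tor}_\zee H_1(Y;\zee)|=|\det(L)|$. Plugging into \eqref{Ndef},
\[
N_Y=\tfrac12\bigl(\dim_\F\widehat{HF}(Y)-|\det(L)|\bigr)\le \tfrac12\bigl(\dim_\F\widetilde{Kh}(-L)-|\det(L)|\bigr),
\]
hence $|\tau(\beta)|\le N_Y+1\le \tfrac12(\dim_\F\widetilde{Kh}(-L)-|\det(L)|)+1$. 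This is a factor of two sharper than the claimed bound, so in fact one gets the stated inequality with room to spare; I would present it this way, noting the stronger estimate. (If one prefers to match the statement verbatim, the weaker constant absorbs any orientation-sign ambiguity in identifying $-L$ versus $L$ without effort.)

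The main obstacle is the first ingredient: carefully setting up the open book on $\Sigma_2(L)$ and verifying $\tau(\tilde\phi_\beta)=\tau(\beta)$ with correct sign and the correct identification of which 3-manifold is the branched double cover (and that the binding is connected precisely when $n$ is odd). One must check that the $\Z/2$ deck transformation is compatible with the fibration, that the quotient open book on $S^3$ is the standard braid open book, and that lifting commutes with adding boundary Dehn twists in the sense needed; equivalently, that $\Sigma_2$ of the open book $(D^2_n,\beta\circ(\text{full twist})^m)$ is $(S,\tilde\phi_\beta\circ t_{\partial S}^{?})$ with the twist count tracked correctly. None of this is deep, but it is the only place real geometry happens; everything after is bookkeeping with the Ozsv\'ath--Szab\'o spectral sequence and the determinant. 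I would cite Birman--Hilden and the Plamenevskaya/Baldwin line of work on transverse invariants and branched covers for the equivariance of $\tau$, and Ozsv\'ath--Szab\'o's branched double cover spectral sequence for the rank bound, then assemble the inequality as above.
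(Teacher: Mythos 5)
Your reduction to Theorem~\ref{basicthm} via the branched double cover, and the bookkeeping with the \OS\ spectral sequence and $|H_1(\Sigma_2(L))|=|\det(L)|$, is exactly the paper's argument. However, there is a genuine error in the one piece of geometry you flagged as the ``main obstacle,'' and it changes the constant: you claim $\tau(\tilde\phi_\beta)=\tau(\beta)$, justified by the assertion that ``the covering of the boundary annulus is trivial in the odd-strand case.'' This is backwards. For $n$ odd the total $\Z/2$ holonomy around $\partial D^2$ is the product of the $n$ branch-point monodromies, which is nontrivial; hence $\partial S\to\partial D^2$ is the \emph{connected} double cover of the circle. (This is precisely why the binding is connected when $n$ is odd — if the boundary cover were trivial, the lift of the binding would have two components.) As a consequence a single boundary twist $t_{\partial D^2}=\Delta^2$ does \emph{not} lift to $\MCG(S,\partial S)$: its natural lift, supported in the preimage annulus, is only a half-twist of that annulus, which rotates $\partial S$ by $\pi$. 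Rather, $\Delta^4=t_{\partial D^2}^2$ lifts to a single boundary twist $t_{\partial S}$. Combined with the boundary-twisting property of $\tau$, this gives $\tau(\tilde\phi_\beta)=\tfrac12\tau(\beta)$, which is the relation used in the paper, not $\tau(\tilde\phi_\beta)=\tau(\beta)$.

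With the corrected factor, the chain of inequalities becomes $\tfrac12|\tau(\beta)|=|\tau(\tilde\phi_\beta)|\le N_{\Sigma_2(L)}+1\le\tfrac12\bigl(\dim_\F\widetilde{Kh}(-L)-|\det(L)|\bigr)+1$, i.e.\ exactly the stated bound $|\tau(\beta)|\le\dim_\F\widetilde{Kh}(-L)-|\det(L)|+2$. The ``factor of two sharper'' estimate you claim to obtain is an artifact of the erroneous equivariance and is not established by this argument. Everything else — connectedness of the binding for odd $n$, the application of Theorem~\ref{basicthm} with \eqref{Nestimate1}, the use of the Branched spectral sequence, and $|H_1(\Sigma_2(L))|=|\det(L)|$ — is correct and agrees with the paper.
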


Here $\widetilde{Kh}(-L)$ denotes the reduced Khovanov homology of the mirror of $L$, with coefficients in $\F$. 

\begin{proof} 
If $\hat{\beta}$ is a closed braid with axis the unknot $U$ and representing the link type $L$, then forming the double cover of $S^3$ branched along $\hat{\beta}$ gives rise to a 3-manifold $\Sigma_2(L)$ equipped with an open book decomposition lifting the decomposition of $S^3$ with disk pages and binding $U$. Since $\beta$ has an odd number of strands, this open book structure on $\Sigma_2(L)$ has connected binding.

If $\phi$ denotes the monodromy of the lifted open book, then it is not hard to check that $\tau(\phi) = {1\over 2} \tau(\beta)$. From Theorem \ref{thm1} and using \eqref{Nestimate1}, we have
\[
|\tau(\phi)| \leq {1\over 2}(\dim\widehat{HF}(\Sigma_2(L)) - |H_1(\Sigma_2(L))|) + 1.
\]
Now recall that there is a spectral sequence whose $E_2$ page is the reduced Khovanov homology $\widetilde{Kh}(-L)$, and which converges to $\widehat{HF}(\Sigma_2(L))$ (see \cite{Branched}). Hence
\[
\dim \widehat{HF}(\Sigma_2(L))\leq \dim\widetilde{Kh}(-L).
\]
Moreover, $|H_1(\Sigma_2(L))| = |\det(L)|$ unless the latter quantity is 0. Combining these observations gives the desired result.
 
\end{proof}

Suppose that $\beta'$ is an alternating braid on $n\geq 3$ strands. Using Corollary 5.5 of \cite{Malyutin}, for example, it is easy to see that $\tau(\beta') = 0$. However, it is not the case that any braid Markov equivalent to $\beta'$ has vanishing twist number. The following shows that nevertheless, there is an upper bound on the twist number of a braid representing the same link type as $\hat{\beta'}$.

\begin{cor} Let $L$ be an alternating link, and $\beta$ any braid on an odd number of strands with the property that the closure $\hat{\beta}$ is isotopic to $L$. Then
\[
|\tau({\beta})| < 2.
\]
\end{cor}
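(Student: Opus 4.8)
The plan is to combine the braid bound of Theorem~\ref{braidthm} with the classical computation of reduced Khovanov homology of alternating links. First I would recall that for an alternating (indeed quasi-alternating) link $L$, the reduced Khovanov homology is ``thin'': it is supported on a single diagonal, and its total rank equals the determinant, so $\dim_\F \widetilde{Kh}(-L) = |\det(L)|$. (This is the mod-$2$ version of the Lee--Ozsv\'ath--Szab\'o computation; note also that an alternating link has nonzero determinant, so $|\det(L)| = |H_1(\Sigma_2(L))|$ and no degenerate case arises.) Substituting into the inequality of Theorem~\ref{braidthm} gives
\[
|\tau(\beta)| \;\leq\; \dim_\F \widetilde{Kh}(-L) - |\det(L)| + 2 \;=\; |\det(L)| - |\det(L)| + 2 \;=\; 2.
\]

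To upgrade the non-strict bound $|\tau(\beta)|\le 2$ to the strict bound $|\tau(\beta)| < 2$ claimed in the corollary, I would argue that equality $|\tau(\beta)| = 2$ cannot occur. The cleanest route is to appeal to the refined statement underlying Theorem~\ref{braidthm}: the proof there runs through Theorem~\ref{basicthm} applied to the open book on $\Sigma_2(L)$ with monodromy $\phi$, where $\tau(\phi) = \tfrac12\tau(\beta)$, and produces $|\tau(\phi)| \le N_{\Sigma_2(L)} + 1$ with $N_{\Sigma_2(L)} = \tfrac12(\dim\widehat{HF}(\Sigma_2(L)) - |H_1(\Sigma_2(L);\Z)|)$. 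For alternating $L$, $\Sigma_2(L)$ is an $L$-space (branched double covers of alternating, or quasi-alternating, links are $L$-spaces), so $N_{\Sigma_2(L)} = 0$ and hence $|\tau(\phi)| \le 1$. By the Remark following Theorem~\ref{basicthm}, fibered knots in an $L$-space in fact satisfy the \emph{strict} inequality $|\tau(\phi_K)| < 1$: equality $|\tau(\phi_K)| \ge 1$ would force $\Sigma_2(L)$ to admit a taut foliation (by \cite{HKMVeer2}), contradicting the fact (\cite{GenusBounds}) that $L$-spaces carry no taut foliation. Therefore $|\tau(\phi)| < 1$, so $|\tau(\beta)| = 2|\tau(\phi)| < 2$.

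The main obstacle is simply making sure the strict inequality is genuinely available in this setting rather than just the weak one coming from rank estimates. The rank bound alone only yields $|\tau(\beta)|\le 2$, since $\dim\widetilde{Kh}(-L)=|\det(L)|$ exactly; the strictness is a genuinely separate input, and it comes from the taut-foliation obstruction for $L$-spaces, exactly as in the Remark after Theorem~\ref{basicthm}. I would therefore organize the write-up as: (i) cite $L$-space-ness of $\Sigma_2(L)$ for alternating $L$ and reduce to bounding $\tau(\phi)$ for the lifted open book; (ii) quote the strict inequality $|\tau(\phi)| < 1$ for fibered knots in $L$-spaces from the Remark; (iii) use $\tau(\phi) = \tfrac12\tau(\beta)$ to conclude. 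One should double-check the relation $\tau(\phi)=\tfrac12\tau(\beta)$ and the connectedness of the lifted binding (which uses that $\beta$ has an odd number of strands), but both are already established in the proof of Theorem~\ref{braidthm}, so no new work is needed there.
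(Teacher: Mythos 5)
Your proposal is correct and follows essentially the same route as the paper: obtain the weak bound $|\tau(\beta)|\le 2$ from Theorem~\ref{braidthm} together with thinness of $\widetilde{Kh}$ for alternating links (equivalently, collapse of the spectral sequence to $\widehat{HF}(\Sigma_2(L))$ for an $L$-space), then upgrade to strict inequality via the remark after Theorem~\ref{basicthm} that fibered knots in $L$-spaces have $|\tau(\phi)|<1$, using $\tau(\phi)=\tfrac12\tau(\beta)$.
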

\begin{proof}
Since $\hat{\beta}$ is the alternating link $L$, the double branched cover $\Sigma_2(L)$ is an $L$-space, and moreover the spectral sequence from $\widetilde{Kh}(-L)$ to $\widehat{HF}(\Sigma_2(L))$ collapses (c.f. \cite{Branched}). The proof of Theorem \ref{braidthm} then gives $|\tau(\hat{\beta}')|\leq 2$, and the strict inequality follows from the remark at the end of Section \ref{proofsec}.
\end{proof}

The corollary applies equally, of course, to braids whose closures are quasi-alternating in the sense of \OS \cite{Branched}: the double cover of $S^3$ branched along such a link is also an $L$-space.

\section{Graded Refinement}\label{gradedsec}

Here we provide the proof of Theorem \ref{thm3}. To do so, we recall absolute gradings constructed on Heegaard Floer homology by Ozsv\'ath and Szab\'o \cite{HolDiskFour} (in the case of torsion \spinc structures)  and Gripp and Huang \cite{GH} (in general). We also clarify some properties of the general grading by homotopy classes of plane fields in the latter case.

\subsection{Absolute Gradings}

Recall that Heegaard Floer homology carries a relative cyclic grading in each \spinc structure. Gripp and Huang \cite{GH} proved that this can be lifted to an absolute grading on $HF^\circ(Y)$ by the set of homotopy classes of oriented 2-plane fields on $Y$, which we will denote by $\P(Y)$ (here $HF^\circ$ indicates any of the versions of Heegaard Floer homology). Note that if $\xi$ is an oriented plane field, then there is an associated \spinc structure $\s_\xi$. In general, $\P(Y)$ is a $\zee$-set whose orbits correspond bijectively to \spinc structures; the orbit corresponding to a given \spinc structure $\s$ is isomorphic to $\zee/d(\s)\zee$, where $d(\s)$ is the divisibility of $c_1(\s)$, see e.g., \cite[Lemma 2.3 and Section 5(i)] {KMcontact}. We denote the $\zee$ action on $\P(Y)$ by $[\xi]\mapsto [\xi] + n$ for $n\in \zee$. Gripp and Huang proved certain properties of the grading on Floer homology by $\P(Y)$, notably:
\begin{itemize}
\item For any plane field $\xi\in \P(Y)$, we have $HF^+_{[\xi]}(Y)\subset HF^+(Y, \s_\xi)$.
\item The grading by plane fields lifts the relative grading on Heegaard Floer homology defined by Ozsv\'ath and Szab\'o. Moreover, if $c_1(\s_\xi)$ is torsion, the summand $HF^\circ_{[\xi]}(Y)$ coincides with the $\cue$-graded summand $HF^\circ_{h(\xi)}(Y, \s_\xi)$, where $h(\xi)$ is the ``Hopf invariant'' of $\xi$, defined as follows. Choose a compact almost-complex 4-manifold $(W, J)$ with $\partial W = Y$ and $TY\cap J(TY)$ homotopic to $\xi$. Then 
\begin{equation}\label{hopfinvt}
h(\xi) = {1\over 4}(c_1^2(W, J) - 3\sigma(W) - 2 \chi(W) + 2),
\end{equation}
where $\sigma$ is the signature of $W$, $\chi(W)$ is the Euler characteristic, and $c_1^2$ denotes the rational-valued square of the Chern class.
\item If $\xi$ is a contact structure on $Y$, then the contact invariant $c(\xi)$ lies in $HF^+_{[\xi]-1}(-Y)$ (where we think of $\xi$ as also defining an oriented plane field on $-Y$; see below for further discussion).   
\end{itemize}
To describe the final property, we say plane fields $p_1$ and $p_2$ are {\em related by} $W$ if there is an almost-complex structure $J$ on $W$ such that the fields of complex tangencies $TY_i\cap J(TY_i)$ represent $p_i$, for $i = 1,2$.
\begin{itemize}

\item Let $W: Y_1\to Y_2$ be a cobordism, and let $p_i \in \P(Y_i)$ for $i = 1,2$. If $x\in HF^+_{p_1}(Y_1)$ is a homogeneous element such that $F_W(x)$ has a nonzero component in $HF^+_{p_2}(Y_2)$, then $p_1$ and $p_2$ are  related by $W$.
\end{itemize}

We will need a slightly more concrete understanding of the $\zee$-action on the set of homotopy classes of oriented plane fields on a 3-manifold. For this, fix a metric on a 3-manifold $M$ and a trivialization $\tau:TM\cong M\times \mathbb{R}^3$; then, by taking normal vectors, an oriented plane field $\xi$ corresponds to a map $\xi^\tau: M\to S^2$. By the Pontryagin-Thom construction homotopy classes of such maps naturally correspond to cobordism classes of framed links in $M$. The action of $n\in \zee$ on $[\xi]$ is given by adding $n$ to the framing of some component of a link representing $[\xi]$. Note that this action implicitly depends on the orientation of $M$.

Observe that the set $\P(Y)$ carries a natural involution $[\xi]\mapsto [\xi]^*$ induced by reversing the orientation of the plane field $\xi$. We claim that this respects the $\zee$-action: indeed, letting $\xi$ correspond to a map to $S^2$ as above, reversing orientation is equivalent to replacing $\xi^\tau$ by $-\xi^\tau$, which clearly can be realized on the level of links by reversing orientation. Moreover, adding a twist to the framing of an oriented link $L$ also adds a (positive) twist to $-L$, so that $([\xi] + n)^* = [\xi]^* + n$. In fact we have: 

\begin{lemma}\label{conjlemma} For any oriented 3-manifold $Y$ with oriented plane field $[\xi]$, there is an isomorphism
\[
HF^+_{[\xi]}(Y, \s_\xi) \cong HF^+_{[\xi]^*}(Y, \s_{\xi^*}).
\]
\end{lemma}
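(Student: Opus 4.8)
The plan is to deduce the isomorphism from the well-known conjugation symmetry of Heegaard Floer homology together with the behavior of absolute gradings under conjugation. Recall that \OnS\ established a canonical isomorphism $HF^+(Y,\s)\cong HF^+(Y,\bar\s)$, where $\bar\s$ is the conjugate \spinc\ structure; so the only content of the lemma is to identify the grading shift. First I would observe that the \spinc\ structure $\s_{\xi^*}$ associated to the orientation-reversed plane field $\xi^*$ is precisely the conjugate $\overline{\s_\xi}$: reversing the orientation of a 2-plane field corresponds, on the level of the associated almost-complex structure on a bounding 4-manifold, to composing with complex conjugation, which conjugates the \spinc\ structure. Hence the two sides of the claimed isomorphism live in conjugate \spinc\ structures, and the conjugation isomorphism of \OnS\ provides an abstract identification $HF^+(Y,\s_\xi)\cong HF^+(Y,\s_{\xi^*})$.

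Next I would pin down how this isomorphism interacts with the $\P(Y)$-grading. The key is the Pontryagin--Thom description recalled just before the lemma: fixing a trivialization $\tau$ of $TY$, an oriented plane field corresponds to a map $Y\to S^2$, hence to a framed-cobordism class of framed links, and the $\zee$-action is by twisting framings. I would argue that the conjugation isomorphism on Floer homology is grading-preserving with respect to the involution $[\xi]\mapsto[\xi]^*$ on $\P(Y)$; i.e. if $x\in HF^+_{[\xi]}(Y)$ then its image under conjugation lies in $HF^+_{[\xi]^*}(Y)$. In the torsion case this can be checked against the $\cue$-grading: the Hopf invariant satisfies $h(\xi^*)=h(\xi)$ because the formula \eqref{hopfinvt} depends only on $(W,J)$ through $c_1^2(W,J)$, $\sigma(W)$, and $\chi(W)$, and replacing $J$ by $\bar J$ negates $c_1(W,J)$ while leaving $c_1^2$, $\sigma$, $\chi$ unchanged; combined with \OnS's computation that the conjugation isomorphism preserves the absolute $\Q$-grading, this gives the result. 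For the non-torsion case (where there is no $\Q$-grading) I would instead invoke the compatibility of the $\P(Y)$-grading with cobordism maps, together with the observation, already recorded in the excerpt, that $([\xi]+n)^*=[\xi]^*+n$, so that the involution $*$ is an isomorphism of $\zee$-sets intertwining the two grading conventions.

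Concretely, the argument runs: $\s_{\xi^*}=\overline{\s_\xi}$; the \OnS\ conjugation isomorphism $HF^+(Y,\s_\xi)\xrightarrow{\ \sim\ }HF^+(Y,\overline{\s_\xi})$ exists; and this isomorphism carries the summand graded by $[\xi]$ to the summand graded by $[\xi]^*$, which one verifies by the Pontryagin--Thom/almost-complex picture above (in the torsion case, equivalently, by $h(\xi)=h(\xi^*)$ plus grading-preservation of conjugation). Assembling these three points yields the claimed identification $HF^+_{[\xi]}(Y,\s_\xi)\cong HF^+_{[\xi]^*}(Y,\s_{\xi^*})$.

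The main obstacle I anticipate is the second step: making precise that the conjugation isomorphism respects the plane-field grading, rather than merely the underlying \spinc\ decomposition. In the torsion setting this reduces cleanly to the invariance $h(\xi)=h(\xi^*)$ of the Hopf invariant together with the known grading-preservation of conjugation, so that case is essentially routine. The non-torsion case is more delicate, since one must work directly with the $\P(Y)$-valued grading of Gripp--Huang and its compatibility with the Pontryagin--Thom correspondence under orientation reversal of the normal vector field; establishing that the framing-twist $\zee$-action and the orientation-reversal involution interact with the conjugation symmetry in the claimed way is where the real care is needed, and I would expect to lean on the cobordism-map property of the grading listed in the excerpt to handle it uniformly.
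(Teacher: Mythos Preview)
Your torsion-case argument is correct and essentially self-contained: $h(\xi^*)=h(\xi)$ together with the fact that the \OS\ conjugation isomorphism preserves the absolute $\cue$-grading does the job. However, your non-torsion case is not really an argument---you correctly identify it as the delicate point, but invoking ``the cobordism-map property'' is too vague. Knowing that cobordism maps respect the plane-field grading does not by itself tell you how the conjugation isomorphism acts on the $\P(Y)$-grading; you would need to exhibit a specific cobordism that intertwines conjugation with the involution $*$, and it is not clear what that would be. The compatibility $([\xi]+n)^*=[\xi]^*+n$ only tells you that conjugation sends the $\zee$-orbit of $[\xi]$ to the $\zee$-orbit of $[\xi]^*$ in a $\zee$-equivariant way, which leaves an ambiguity of exactly the sort you need to resolve.

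The paper's approach sidesteps all of this by going directly to the chain level and the Gripp--Huang construction. The conjugation isomorphism is implemented by passing from the diagram $(\Sigma,\aalpha,\bbeta,z)$ to $(-\Sigma,\bbeta,\aalpha,z)$, keeping the same intersection points as generators. Gripp and Huang build the plane-field grading of a generator $\x$ from a canonical modification of a gradient-like vector field for the Morse function encoded by the diagram; under the diagram change above, that gradient-like field is simply negated. Negating the vector field reverses the orientation of the orthogonal plane field, so the grading of $\x$ in the new diagram is exactly $[\xi]^*$ if it was $[\xi]$ before. This is a one-line observation once the Gripp--Huang construction is in hand, and it treats torsion and non-torsion \spinc\ structures uniformly. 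Your route, by contrast, would require separate bookkeeping in the two cases and an as-yet-unspecified mechanism in the non-torsion one.
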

This can be seen as a refinement of the conjugation invariance of Floer homology, since the \spinc structure associated to $[\xi]^*$ is conjugate to $\s_{\xi}$. Given Gripp and Huang's construction, the proof is routine and based on the observation that if $\x$ is a Heegaard Floer generator coming from the diagram $(\Sigma, \aalpha, \bbeta, z)$ and with corresponding \spinc structure $\s$, then the same intersection point interpreted in $(-\Sigma, \bbeta, \aalpha, z)$ corresponds to the conjugate of $\s$ and gives rise to the negative of the gradient-like vector field originally determined by $\x$ (c.f. \cite{GH}, \cite[Theorem 2.4]{HolDiskTwo}). 

\subsection{Graded Cobordism Maps and the Contact Invariant}
It will be useful for us to explore the sense in which cobordism-induced homomorphisms are graded by recalling some of the homotopy classification of almost-complex structures on 4-manifolds. First, observe that an almost-complex structure on an oriented 4-manifold $W$ is the same as a lift of the the classifying map for $TW$ from $BSO(4)$ to $BU(2)$. Since the fiber of the bundle $BU(2)\to BSO(4)$ is $SO(4)/U(2) \cong S^2$, we are interested in obstruction theory for a bundle over $W$ with fiber $S^2$. (An alternate perspective is given by choosing a metric on $W$, after which a choice of almost-complex structure is the same as a non-vanishing section of the rank-3 vector bundle $\Lambda^+$ of self-dual 2-forms on $W$.) 

Assuming an almost-complex structure exists, the obstructions to homotopy between two such sections lie in the cohomology groups $H^i(W; \pi_i(S^2))$. For $i = 2$ this corresponds to the difference between the induced \spinc structures. If $W$ is closed and spin, then there is an obstruction to homotopy lying in $H^4(W; \zee/2)$ (c.f. Kirby-Melvin-Teichner \cite{KMT}). In the cases of interest to us, we have the following:

\begin{prop}\label{jprop} Let $W$ be a 4-dimensional almost-complex manifold, and $Y$ a (nonempty) component of $\partial W$, such that $H^3(W, Y; \zee) = 0$. Then an almost-complex structure $J$ on $W$ is determined up to homotopy by its induced \spinc structure and the homotopy class of the oriented plane field $TY\cap J(TY) \in \P(Y)$.
\end{prop}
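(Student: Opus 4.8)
The statement to establish is that two almost-complex structures $J_0,J_1$ on $W$ with $\s_{J_0}=\s_{J_1}$ and with $[TY\cap J_0(TY)]=[TY\cap J_1(TY)]$ in $\P(Y)$ are homotopic, and the plan is to run obstruction theory for sections of an $S^2$-bundle. After fixing a Riemannian metric on $W$ (harmless for the homotopy classification, since every almost-complex structure is compatible with some metric and the relevant spaces of compatible metrics and structures are contractible), a compatible almost-complex structure is a section of the bundle $\mathcal E\to W$ whose fiber at $p$ is the space of orientation-compatible complex structures on $T_pW$, a space diffeomorphic to $SO(4)/U(2)\cong S^2$ --- equivalently a unit section of $\Lambda^+$. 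Restricting a section of $\mathcal E$ to a collar of $Y$ gives a complex structure on $TW|_Y\cong TY\oplus\R$, and under the standard fiber-homotopy correspondence between complex structures on $TY\oplus\R$ and oriented $2$-plane fields on $Y$ this recovers the class $[TY\cap J(TY)]\in\P(Y)$.

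First I would use the plane-field hypothesis, together with the fact that $Y\hookrightarrow W$ is a cofibration and $\mathcal E\to W$ a fibration, to replace $J_1$ by a homotopic structure equal to $J_0$ on a collar of $Y$. With $J_0,J_1$ agreeing near $Y$, the obstructions to a homotopy rel $Y$ between them lie in $H^{j}(W,Y;\pi_j S^2)$ for $j\ge 2$; since $\dim W=4$ and $\pi_2 S^2=\pi_3 S^2=\Z$, $\pi_4 S^2=\Z/2$, these are
\[
H^2(W,Y;\Z),\qquad H^3(W,Y;\Z),\qquad H^4(W,Y;\Z/2).
\]
The degree-$3$ group vanishes by hypothesis. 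The primary difference class $o_2\in H^2(W,Y;\Z)$ maps under $H^2(W,Y;\Z)\to H^2(W;\Z)$ to the difference of the two induced \spinc structures, which is $0$, so by exactness of the sequence of the pair it lies in the image of $\delta\colon H^1(Y;\Z)\to H^2(W,Y;\Z)$. This residual ambiguity is precisely what the leftover freedom from the first step removes: a homotopy of $J_1$ supported near $Y$ that fixes $[TY\cap J_1(TY)]$ changes $o_2$ by an element of $\delta(H^1(Y;\Z))$, so after one more modification near $Y$ we may take $o_2=0$. The remaining obstruction lies in $H^4(W,Y;\Z/2)$; in the cobordisms to which the proposition is applied one has $\partial W\setminus Y\ne\varnothing$, whence Lefschetz duality gives $H^4(W,Y;\Z/2)\cong H_0(W,\partial W\setminus Y;\Z/2)=0$ and the homotopy is completed. (This last group is the relative incarnation of the Kirby--Melvin--Teichner $\Z/2$, which genuinely obstructs only in the closed case.)

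The hard part will be the degree-$2$ step: to pass from vanishing of the \emph{absolute} difference class in $H^2(W;\Z)$ --- exactly what "equal \spinc structure" gives --- to vanishing of the \emph{relative} class in $H^2(W,Y;\Z)$, one must make explicit how homotopies of $J_1$ near $Y$, i.e.\ loops in the space of plane fields on $Y$ based at the prescribed class, act on the difference cochain. The cleanest way to organize this bookkeeping (and the base-point issues) uniformly is via the long exact homotopy sequence of the restriction fibration $\mathrm{Sect}(\mathcal E)\to\mathrm{Sect}(\mathcal E|_Y)$: its base has $\pi_0=\P(Y)$, its fiber over a section is the space of sections of $\mathcal E$ rel $Y$ --- with homotopy groups the obstruction groups above --- and the image of $\pi_1$ of the base is exactly the indeterminacy used in the argument. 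Granting this, the rest is routine obstruction theory.
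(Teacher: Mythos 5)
Your approach is a genuine variant of the paper's: you run \emph{relative} obstruction theory (first homotope $J_1$ to agree with $J_0$ near $Y$, then measure obstructions in $H^j(W,Y;\pi_j S^2)$), whereas the paper runs \emph{absolute} obstruction theory in $H^j(W;\pi_j S^2)$ and imports the plane-field hypothesis only at the $\pi_3$ stage via the injectivity of $H^3(W)\to H^3(Y)$ (a direct consequence of $H^3(W,Y)=0$). Both routes can be made rigorous, and your extra care at the degree-2 step --- noting that ``equal \spinc structures'' only kills the \emph{absolute} class, so the \emph{relative} class $o_2$ lives in $\delta(H^1(Y;\zee))$ and must be cancelled against the loop of plane fields on $Y$ --- is a real subtlety that the paper's one-line ``the \spinc structures agree'' glosses over. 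Your proposed bookkeeping via the restriction fibration $\mathrm{Sect}(\mathcal E)\to\mathrm{Sect}(\mathcal E|_Y)$ is the right organizing device for that step, though you don't carry it out.

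There is, however, a genuine gap at the $\pi_4$ stage. Your tertiary obstruction lives in $H^4(W,Y;\zee/2)$, and you dispose of it by observing that $\partial W\setminus Y\neq\varnothing$ in the paper's applications. But that is not a hypothesis of the proposition; the only hypothesis is $H^3(W,Y;\zee)=0$, and if $\partial W=Y$ one has $H^4(W,Y;\zee/2)\cong H_0(W;\zee/2)\cong\zee/2$ by Lefschetz duality, so your argument does not close. The paper avoids this entirely by working with the absolute group: for any compact connected 4-manifold with nonempty boundary, $H^4(W;\zee/2)\cong H_0(W,\partial W;\zee/2)=0$ with no further assumptions, so the $\pi_4$ obstruction vanishes for free. (Your parenthetical about the Kirby--Melvin--Teichner class is the right intuition for why the $\zee/2$ ``genuinely obstructs only in the closed case,'' but to turn this into a proof of the proposition as stated you should switch to the absolute framework at this stage, or otherwise argue that the indeterminacy from changing the homotopy on the 3-skeleton --- including on $Y$, which the statement permits since it only asks for \emph{some} homotopy --- maps onto $H^4(W,Y;\zee/2)$.) In short: your relative setup trades a clean absolute $H^4(W)=0$ for a relative $H^4(W,Y)$ that need not vanish under the stated hypotheses.
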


\begin{proof} If the \spinc structures $\s_{J_i}$ induced by $J_0$ and $J_1$ agree, then since $H^4(W) = 0$ and $\pi_3(S^2) = \zee$, the only obstruction to homotopy lies in $H^3(W; \zee)$. Under our hypotheses the restriction $H^3(W) \to H^3(Y)$ is injective, so it suffices to consider the restriction of $J_i$ to $T_Y W$. It is not hard to see that almost-complex structures on $T_YW$ are homotopic if and only if the corresponding plane fields $TY\cap J_i( TY)$ are homotopic in $TY$. 
\end{proof}

\begin{remark} It is also the case that for a general cobordism $W:Y_1\to Y_2$ between connected oriented 3-manifolds, if $J_1$ and $J_2$ are almost-complex structures inducing the same $\spinc$ structure on $W$ and also the same $2$-plane field on $Y_1$ then they induce the same plane field on $Y_2$.
\end{remark}

With this result, we can give a more detailed argument for the absolute grading of the contact invariant along the lines of the one outlined by Ozsv\'ath and Szab\'o in \cite{Contact}. For simplicity we work with coefficients in $\F$, though the discussion applies with minimal modification to determine the grading of the contact submodule in the case of twisted coefficients. 

Given an oriented 3-manifold $Y$ with co-oriented (and hence oriented) contact structure $\xi$, Giroux's theorem \cite{Giroux} implies that we can find an open book decomposition for $Y$ that supports $\xi$, has connected binding $K$, and has pages $S$ of genus $g>1$. Then Lemma \ref{fulltwistlemma} implies that the contact invariant of $\xi$ is equal to the image under $F^+_W$ of the nonzero element of $HF^+(-Y_0, \s_{1-g}) \cong \F$, where $W: -Y_0\to -Y$ is the surgery cobordism (in fact, in this untwisted situation Lemma \ref{fulltwistlemma} is nothing but \cite[Proposition 3.1]{Contact}).

%Let $Y_0$ denote the result of $0$-framed surgery along $K$, so that $Y_0$ is a fibered 3-manifold with fibers $\widehat{S}$ of genus $g$ (the same is true of $-Y_0$). Then according to Lemma \ref{fulltwistlemma}, there is a unique \spinc structure $\s_{1-g}$ on $-Y_0$ whose Chern class evaluates to $2-2g$ on $[\Shat]$ and which supports non-trivial Floer homology.  Moreover, we have $HF^+(-Y_0, \s_{1-g}) \cong \F$.  Let $e$ be the nonzero element of this group and let $W$  be the  $2$-handle cobordism induced by $0$-surgery, regarded  as a cobordism from $-Y_0$ to $-Y$.  Then  the contact invariant of $\xi$ is equal to $F^+_W(e)\in HF^+(-Y)$.

Now construct a Lefschetz fibration $X$ with oriented boundary $-Y_0$ and whose singular fibration extends the surface bundle structure on the boundary. Removing a 4-ball from $X$, Ozsv\'ath and Szab\'o show in \cite[Theorem 5.3]{OSSymp} that there is a unique \spinc structure $\kk$ on $X$ having $\langle c_1(\kk), \widehat{S}\rangle = 2-2g$ and inducing a nontrivial map (in fact an isomorphism) $HF^+(Y_0, \s_{1-g})\to HF^+_0(S^3)$, where the latter refers to the absolute $\cue$-grading. Dualizing, it follows that the contact invariant $c(\xi)$ is equal to the ``mixed invariant'' $F_{X\cup W}^{mix}(\Theta^-)$ (c.f. Plamenevskaya \cite[Lemma 1]{OlgaDistinct} for more details). We are interested in the grading of this element. 

First consider the Lefschetz fibration $X$: it can be given a handle structure with a single 0-handle, $2g$ 1-handles, various 2-handles corresponding to the 2-handle of $\hat\widehat{S}{S}$ and a factorization of the monodromy of the fibration of $-Y_0$ into a product of right-handed Dehn twists, and no other handles. It follows easily that $H^3(X- B^4, S^3; \zee) = 0$, so that Proposition \ref{jprop} applies. Now, $X$ admits a canonical symplectic structure \cite[Theorem 10.2.18]{GS} (c.f. \cite{Gompf2004}) to which is associated a natural homotopy class of compatible almost-complex structure, from which an element $J_0$ can be chosen so that the fibers of the Lefschetz fibration $X\to D^2$ are $J_0$-holomorphic. In particular the adjunction formula implies that $c_1(J_0)$ pairs with the fiber $\widehat{S}$ to give $2-2g$. Let $\J_{\widehat{S}}(X)$ denote the set of homotopy classes of almost-complex structures $J$ on $X$ such that $\s_J = \s_{J_0}$. 

\begin{lemma} The set $\J_{\widehat{S}}(X)$ admits a free transitive $\zee$ action, and the restriction $\J_{\widehat{S}}(X)\to \P(S^3)$ is an isomorphism of $\zee$-sets. 
\end{lemma}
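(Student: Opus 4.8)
The plan is to describe $\J_{\widehat{S}}(X)$ by obstruction theory as a torsor over $H^3(X;\zee)$, to compute that group to be $\zee$, to match it with the $\zee$ that acts on $\P(S^3)$, and then to observe that the restriction map of the lemma is an injective equivariant map of $\zee$-torsors. First I would record the homotopy type of $X$; here, as in the construction of the mixed invariant and in the notation $\J_{\widehat{S}}(X)$, a ball has been removed, so $\partial X=-Y_0\sqcup S^3$. Deleting the single $0$-handle from the Lefschetz handle decomposition exhibits $X$ as $S^3\times[0,1]$ with $2g$ one-handles, and the two-handles (one from the fiber $\widehat{S}$, one for each factor of the chosen expression of the monodromy as a product of right-handed Dehn twists) attached along $S^3\times\{1\}$; there are no handles of index $3$ or $4$. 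Retracting the collar, $X$ has the homotopy type of $S^3$ with those cells attached; since $S^3$ is simply connected the two-handle attaching circles may be homotoped off the $S^3$ summand, so $X\simeq S^3\vee\K$ for a $2$-complex $\K$, the boundary $S^3$ corresponding to the wedge summand. Hence $H^3(X;\zee)\cong\zee$ and $H^4(X;\zee)=H^4(X;\zee/2)=0$, and the restriction $H^3(X;\zee)\to H^3(S^3;\zee)$ is an isomorphism (consistent with the vanishing $H^3(X,S^3;\zee)=0$ already noted).

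Next I would invoke obstruction theory for the $S^2$-fibration $SO(4)/U(2)\to BU(2)\to BSO(4)$ (cf.\ \cite{Gompf}, \cite{KMT}, and the proof of Proposition~\ref{jprop}): on a compact oriented $4$-manifold $M$ with nonempty boundary — where $H^4(M;\zee)=H^4(M;\zee/2)=0$ — the homotopy classes of almost-complex structures inducing a fixed \spinc structure, if any exist, form a free transitive $H^3(M;\zee)$-set. Transitivity comes from modifying a given structure over the $3$-cells; the two $H^4$-vanishings guarantee respectively that every choice of data over the $3$-skeleton extends to $M$ and that it extends uniquely, the only potential indeterminacy living in $H^4(M;\zee/2)$ via $\pi_4(S^2)\cong\zee/2$. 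Since $J_0\in\J_{\widehat{S}}(X)$, applying this with $M=X$ shows that $\J_{\widehat{S}}(X)$ is a free transitive $\zee$-set, with $\zee\cong H^3(X;\zee)$; this is the asserted action. Moreover the difference cochain underlying this torsor structure is defined cell by cell and so is natural under restriction to the boundary component $S^3$.

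Finally, the restriction map $r\colon\J_{\widehat{S}}(X)\to\P(S^3)$, $J\mapsto[TS^3\cap J(TS^3)]$, is injective by Proposition~\ref{jprop}, which applies because $H^3(X,S^3;\zee)=0$: every member of $\J_{\widehat{S}}(X)$ carries the \spinc structure $\s_{J_0}$ and so is determined up to homotopy by its restriction to $S^3$. By naturality of the difference cochain together with the fact that $H^3(X;\zee)\to H^3(S^3;\zee)$ is an isomorphism, $r$ is $\zee$-equivariant, once one observes (via Pontryagin--Thom) that the $H^3(S^3;\zee)\cong\zee$ torsor structure on $\P(S^3)$ is precisely the ``twist the framing'' action on plane fields fixed earlier in the paper. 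Since $\P(S^3)$ is itself a $\zee$-torsor and any injective $\zee$-equivariant map of $\zee$-torsors is bijective, $r$ is an isomorphism of $\zee$-sets.

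The step I expect to be the main obstacle is the obstruction-theoretic input of the second paragraph: one must be careful that the relevant equivalence is genuine homotopy and not homotopy rel the $3$-skeleton — which is exactly why both $H^4(\,\cdot\,;\zee)=0$ (to extend over $4$-cells) and $H^4(\,\cdot\,;\zee/2)=0$ (to kill the $\pi_4(S^2)$ indeterminacy) are required — and one should check that the normalization and sign conventions for the $\zee$-action on $\P(S^3)$ agree with those fixed earlier, so that $r$ is an honest isomorphism of $\zee$-sets and not merely an anti-isomorphism.
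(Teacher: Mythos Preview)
Your proof is correct and complete, but it takes a genuinely different route from the paper's argument. The paper's proof is very brief and geometric: it invokes Proposition~\ref{jprop} for injectivity of the restriction map (exactly as you do), but then instead of appealing to obstruction theory it constructs the $\zee$-action explicitly. Namely, one chooses an embedded arc in $X$ running between the two boundary components, trivializes the bundle $\Lambda^+$ over a tubular neighborhood $[0,1]\times B^3$ of the arc, and modifies a given $J$ (viewed as a section of the unit sphere bundle of $\Lambda^+$) so that over each slice $\{t\}\times B^3$ the old and new sections glue along $\partial B^3$ to the Hopf map $S^3\to S^2$. This visibly restricts to a generator of the $\zee$-action on $\P(S^3)$; combined with injectivity, that is enough to conclude that the restriction is an isomorphism of $\zee$-sets and hence, a posteriori, that the action on $\J_{\widehat{S}}(X)$ is free and transitive.

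Your approach instead computes the homotopy type $X\simeq S^3\vee\K$ with $\K$ a $2$-complex, reads off $H^3(X;\zee)\cong\zee$ and the vanishing of $H^4$, and uses the difference-cochain torsor structure from obstruction theory to identify $\J_{\widehat{S}}(X)$ as an $H^3(X;\zee)$-torsor; equivariance of the restriction then follows from naturality of the difference cochain and the isomorphism $H^3(X;\zee)\to H^3(S^3;\zee)$. This is a perfectly good and arguably more systematic argument --- it makes the freeness and transitivity of the action transparent from the outset, whereas the paper's proof deduces these only after the fact from the restriction map. The trade-off is that the paper's geometric construction gives a concrete generator of the action (a ``Hopf modification'' along an arc), which is useful intuition and matches the Pontryagin--Thom framing picture directly, while your identification of the action with the framing-twist action on $\P(S^3)$ is left to a (standard but unspecified) comparison of conventions, as you yourself flag in your final paragraph.
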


\begin{proof} According to Proposition \ref{jprop}, the map $J\mapsto TS^3\cap J(TS^3)$ induces an injection $\J_{\widehat{S}}(X) \to \P(S^3)$. For the $\zee$ action, choose a nicely embedded path $[0,1]\to X$ connecting the two boundary components of $X$; identify its neighborhood with $[0,1]\times B^3$. Given an almost-complex structure $J$, regarded as a section of the unit sphere bundle of $\Lambda^+(X)$, we trivialize the latter over $[0,1]\times B^3$ and construct $J'$ such that $J' = J$ away from the arc, and over each $t\times B^3$ the two sections glue over $\partial B^3$ to give the Hopf map $S^3\to S^2$. This is easily seen to correspond to a generator of the $\zee$ action on $\P(S^3)$.
\end{proof}

Now $\P(S^3)$ is identified with the integers via the Hopf invariant \eqref{hopfinvt}, and the standard tight contact structure on $S^3$ has Hopf invariant 0. However, while the plane field $\xi_0 = TS^3 \cap J_0(TS^3)$ is isotopic to the standard contact structure on $S^3$, we are considering it as a plane field on $-S^3$. It is not hard to determine the effect of orientation reversal on the Hopf invariant: since the quantity $c_1^2 - 3\sigma - 2\chi$ vanishes for closed almost-complex 4-manifolds, we have
\begin{equation}\label{hopfconj}
h(\xi, Y) + h(\xi, -Y) = 1.
\end{equation}
Hence the Hopf invariant of $TS^3 \cap J_0(TS^3)$ is $+1$. Observe that $TY_0\cap J_0(TY_0)$ is the field of tangents to the fibration of $Y_0$, since the latter is fibered by $J_0$-holomorphic surfaces. 

\begin{cor}\label{fiberedgradingcor} Let $M$ be an oriented, fibered 3-manifold with oriented fiber $S$ having genus $g>1$. Write $[T\widehat{S}]$ for the plane field of oriented tangents to the fibers. Then $HF^+(M, \s_{1-g})$ is supported in absolute grading $[T\widehat{S}]-1$. 
\end{cor}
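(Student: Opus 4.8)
The plan is to read the grading off from the Lefschetz--fibration picture established just before the statement. First I would observe that every oriented fibered $3$-manifold $M$ with closed oriented fiber $S$ of genus $g>1$ occurs as $Y_0(K)$ for a genus-$g$ open book with connected binding: choosing a monodromy $\phi$ for the fibration and isotoping it so that it fixes pointwise an embedded disk $D\subset S$, the pair $(S\setminus\mathrm{int}\,D,\phi|_{S\setminus\mathrm{int}\,D})$ is an open book with connected binding $K$ in some $3$-manifold $Y$, the binding is null-homologous, and $0$-framed surgery on $K$ caps the pages off to recover $M$; under this identification the capped page $\Shat$ is $S$ and $[T\Shat]$ is the plane field tangent to the fibers. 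So it suffices to treat $M=Y_0(K)$. Recall now that \OnS's symplectic construction \cite{OSSymp} provides a Lefschetz fibration $X$ with $\partial X=-Y_0$, fiber $\Shat$, and a \spinc structure $\kk$ induced by the compatible almost-complex structure $J_0$ whose fibers are holomorphic (so $\langle c_1(\kk),[\Shat]\rangle=2-2g$), with the property that the cobordism $X\setminus B^4 : Y_0\to S^3$ induces an isomorphism $F_{X\setminus B^4,\kk}:\HFp(Y_0,\s_{1-g})\to\HFp_0(S^3)$.

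In particular $\HFp(Y_0,\s_{1-g})\cong\F$ is supported in a single homotopy class $\mathfrak p\in\P(Y_0)$ of oriented plane fields, and the corollary reduces to the equality $\mathfrak p=[T\Shat]-1$. For this I would apply the grading property of cobordism maps recalled above (in the form keeping track of \spinc structures): since $F_{X\setminus B^4,\kk}$ is injective on $\HFp_{\mathfrak p}(Y_0,\s_{1-g})$ with image inside $\HFp_0(S^3)$, the class $\mathfrak p$ and the plane-field grading of $\HFp_0(S^3)$ must be related by $X\setminus B^4$ through an almost-complex structure inducing $\kk$. Now $\HFp_0(S^3)$ lies in the plane-field grading of Hopf invariant $0$, and by Proposition \ref{jprop} --- applicable because $H^3(X\setminus B^4,S^3;\Z)=0$ --- the homotopy classes of almost-complex structures on $X\setminus B^4$ inducing $\kk$ form a $\Z$-torsor whose restriction to the $S^3$ end is a bijection onto $\P(S^3)\cong\Z$. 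The distinguished element $J_0$ of this torsor restricts to $[T\Shat]$ on the $Y_0$ end (its complex tangencies there are the tangents to the $J_0$-holomorphic fibers) and to $\xi_0=TS^3\cap J_0(TS^3)$ on the $S^3$ end.

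The essential step is then to push this torsor bookkeeping through to $\mathfrak p=[T\Shat]-1$, rather than $[T\Shat]+1$. By the Hopf-invariant computation above, $\xi_0$ --- regarded on the $S^3$ end of the cobordism, where $S^3$ carries the orientation opposite to $\partial B^4$ --- has Hopf invariant $+1$; hence the member of the torsor whose $S^3$ end has Hopf invariant $0$ is the image of $J_0$ under $-1\in\Z$, and the point is that this same action carries the $Y_0$-end field $[T\Shat]$ to $[T\Shat]-1$. This is where I expect the main obstacle to be: the $\Z$-action on $\P(S^3)$ (for the reversed orientation of $S^3$) and the $\Z$-action on $\P(Y_0)$ enter with a relative sign that must be pinned down. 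I would settle it by the same model used to compute $h(\xi_0)$ above --- inserting a Hopf map along an arc joining the two boundary components of $X\setminus B^4$ and recording its effect on the complex tangencies at each end --- together with the identity \eqref{hopfconj} and, where needed, Lemma \ref{conjlemma}. Granting this, $F_{X\setminus B^4,\kk}$ is nonzero only on the $[T\Shat]-1$ summand of $\HFp(Y_0,\s_{1-g})$, which is therefore all of it, and the corollary follows.
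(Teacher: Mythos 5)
Your proposal is correct and follows essentially the same route as the paper: it uses the Lefschetz fibration $X$ with $\partial X = -M$, the Ozsváth--Szabó isomorphism $HF^+(M,\s_{1-g})\to HF^+_0(S^3)$, the fact that the canonical almost-complex structure $J_0$ restricts to $[T\Shat]$ on $M$ and to a plane field of Hopf invariant $+1$ on the $S^3$ end, and then transports the grading through the $\Z$-torsor $\J_{\widehat{S}}(X-B^4)\cong\P(S^3)$ to land on $[T\Shat]-1$. The preliminary reduction to $M=Y_0(K)$ is not needed (the paper builds the Lefschetz fibration directly from the closed surface bundle and a factorization of its monodromy), and the relative-sign question you flag at the end is the one the paper likewise handles implicitly via the Hopf-insertion model defining the $\Z$-action; your proposed resolution is exactly what is implicit in the paper's lemma preceding the corollary.
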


\begin{proof} Construct a Lefschetz fibration $X$ as above with boundary $\partial X = -M$ and the same oriented fiber $\widehat{S}$, equipped with its canonical \spinc structure. We've seen that grading level 1 in $HF^+(S^3)$ is related by a member of $\J_{\widehat{S}}(X-B^4)$ to the tangent field $[T\widehat{S}]$ (and no other plane field on $M$, by Proposition \ref{jprop}). Hence $HF^+_0(S^3)$ is related only to $[T\widehat{S}]-1$. Since the canonical $\spinc$ structure on $X$ induces an isomorphism $HF^+(M, \s_{1-g})\to HF^+_0(S^3)$, the result follows from Gripp and Huang's work.\end{proof}

\begin{cor}\label{contactgradingcor} If $Y$ is an oriented  3-manifold with oriented contact structure $\xi$, then the contact invariant $c(\xi)$ lies in $HF^+_{[\xi] -1}(-Y)$. 
\end{cor}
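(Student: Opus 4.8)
The plan is to read off the grading of $c(\xi)$ from the identification of the contact invariant as a cobordism map. Since the assertion is vacuous when $c(\xi)=0$, assume $c(\xi)\neq 0$. By Giroux's theorem \cite{Giroux} together with stabilization, choose an open book supporting $\xi$ with connected binding $K$ and page genus $g>1$. By the untwisted case of Lemma~\ref{fulltwistlemma} (equivalently \cite[Proposition~3.1]{Contact}), the contact invariant is
\[
c(\xi)=F^+_W(\Theta),
\]
where $W\colon -Y_0(K)\to -Y$ is the $0$-surgery cobordism along the binding and $\Theta$ generates $HF^+(-Y_0(K),\s_{1-g})\cong\F$. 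Since $Y_0(K)$ is $0$-surgery on a fibered knot, $-Y_0(K)$ is a fibered $3$-manifold whose capped-off fiber $\widehat{S}$ has genus $g>1$, so applying Corollary~\ref{fiberedgradingcor} to it shows that $\Theta$ is homogeneous of absolute grading $[T\widehat{S}]-1$, where $[T\widehat{S}]$ is the oriented plane field tangent to the fibration of $-Y_0(K)$.

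The crux is to identify the plane field on $-Y$ that grades $F^+_W(\Theta)$. I would first show that $W$ \emph{relates} (in the sense defined before Lemma~\ref{conjlemma}) the plane field $[T\widehat{S}]$ on $-Y_0(K)$ to the contact plane field $\xi$ on $-Y$. For this, take the Lefschetz fibration $X$ filling $-Y_0(K)$ with its canonical almost-complex structure $J_0$ whose fibers are holomorphic --- so that $J_0$ induces $[T\widehat{S}]$ on $-Y_0(K)$, exactly as in the proof of Corollary~\ref{fiberedgradingcor} --- and extend $J_0$ across the $2$-handle of $W$ by means of the symplectic structure on the surgery cobordism underlying the identification of $c(\xi)$ with a mixed invariant in \cite{Contact,OSSymp}. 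The resulting almost-complex structure $J$ on $W$ induces $\xi$ on $-Y$, which is the claim. Twisting $J$ along an embedded arc joining the two boundary components (as in the proof of the lemma on $\J_{\widehat{S}}(X)$ above) shows that the relation ``related by $W$'' is $\zee$-equivariant, so $W$ also relates $[T\widehat{S}]-1$ to $[\xi]-1$.

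Finally, I would combine these facts with Proposition~\ref{jprop}. The graded-cobordism property (the last Gripp--Huang bullet above) together with $c(\xi)=F^+_W(\Theta)\neq 0$ forces the grading of $c(\xi)$ to be a plane field on $-Y$ that is related by $W$ to $[T\widehat{S}]-1$; Proposition~\ref{jprop} and the remark after it (an almost-complex structure on $W$ is determined by its \spinc structure and its plane field on the incoming boundary, and this in turn determines its plane field on the outgoing boundary) pin this plane field down to be $[\xi]-1$, using that the relevant \spinc structure on $W$ is the one carried by $J$ and restricts to $\s_{1-g}$ and $\s_\xi$ on the two ends. The main obstacle is the middle step: checking that the extension $J$ across the surgery cobordism genuinely induces the contact plane field $\xi$ on $-Y$ (and not merely its \spinc structure $\s_\xi$), together with the orientation bookkeeping --- in particular, tracking that the shift is by $-1$ rather than $0$, which, as in the $S^3$ base case inside the proof of Corollary~\ref{fiberedgradingcor}, ultimately comes from the conjugation formula \eqref{hopfconj}. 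This is the content sketched in \cite{Contact}; with it in hand, the remaining steps are routine assembly of results already proved above.
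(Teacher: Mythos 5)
Your proposal is correct and follows essentially the same route as the paper: identify $c(\xi)=F^+_W(\Theta)$ with $\Theta$ in grading $[T\widehat{S}]-1$ by Corollary~\ref{fiberedgradingcor}, produce an almost-complex structure on $W$ relating $[T\widehat{S}]$ to $[\xi]$ via the symplectic structure on the surgery cobordism (the paper cites Eliashberg's construction on $W$ directly rather than extending $J_0$ from the Lefschetz fibration $X$), and conclude via Proposition~\ref{jprop} together with the Gripp--Huang compatibility of cobordism maps with the plane-field grading. The only cosmetic difference is that you make the $\Z$-equivariance of the ``related by $W$'' relation explicit, which the paper leaves implicit.
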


\begin{proof} We know $c(\xi)$ is the image of the generator of $HF^+(-Y_0, \s_{1-g})$ under the map induced by the surgery cobordism $W: -Y_0\to -Y$. It follows from Proposition \ref{jprop} that an almost-complex structure on $W$ is determined up to homotopy by the induced plane field on $Y_0$, since the restriction $H^i(W)\to H^i(Y_0)$ is injective for $i = 2, 3$. Recall that Eliashberg \cite{EliashbergFilling} has constructed a symplectic structure on $W$ for which the fibration on $Y_0$ is symplectic and such that the contact planes on $Y$ are symplectically positive. It follows easily that there is an almost-complex structure (compatible with the symplectic structure) on $W$ relating the contact field $[\xi]$ on $Y$ and the plane field $[T\widehat{S}]$ tangent to the fibers in $Y_0$. By the previous corollary, and appealing to Gripp and Huang's results again, we conclude that the generator of $HF^+(-Y_0, \s_{1-g})$ can map only into $HF^+_{[\xi]-1}(-Y)$. 
\end{proof}

Note that if $\widehat{S}$ is the oriented fiber of the fibration $\pi: Y_0\to S^1$, then the same oriented submanifold is the oriented fiber of $f\circ \pi: -Y_0\to S^1$, where $f$ is complex conjugation. Hence $HF^+(-Y_0, \s_{1-g})$ is indeed supported in degree $[T\widehat{S}]-1$. 

\begin{remark} If $c_1(\xi)$ is torsion, we can revert to the absolute $\cue$-grading of Floer homology. In this case, equation \eqref{hopfconj} with the above corollary says that the contact invariant lies in degree $-h(\xi, Y)$ of $HF^+(-Y)$. This was observed, in slightly different language, by Plamenevskaya. Indeed, \cite[Lemma 1]{OlgaDistinct}  implies the statement at hand by considering the grading shift of the mixed map  in Heegaard Floer homology.
\end{remark}

\subsection{Adding Twists to Open Books}
With these preliminaries in place, we return to the homomorphism 
\[
G: HF^+(-Y, \F)\to \ul{HF}^+(-Y_0(K), \F[C_n])
\]
appearing in the surgery triangle. Here, as before, our arguments are given for the case $g>1$ but carry over directly to the genus 1 case by replacing $\F$ by the Novikov field $\Lambda$. We leave the attendant adjustments of the following proofs to the reader.

 Comparing the definition of $G$, given in \eqref{f1def} below, with \eqref{cobordmap} defining cobordism maps in twisted Floer homology, we observe that $G$ is  the map on Floer homology  induced by a $2$-handle cobordism connecting $-Y$ to $-Y_0(K)$, followed by the change of coefficients homomorphism induced by the projection $\F[T,T^{-1}]\to \F[T,T^{-1}]/\langle T^n-1\rangle\cong \F[C_n]$.  Note that if $W: Y\to Y_0(K)$ is the standard $2$-handle cobordism such as the one considered above (which we typically view as a cobordism from $-Y_0(K)$ to $-Y$), then the cobordism under consideration here is $-W$. In particular, the almost-complex structure constructed by Eliashberg is not compatible with the orientation now under consideration. However, it is still the case that an almost-complex structure on $-W$ is determined by the induced plane field on $-Y_0(K)$. For the present purposes it is convenient to consider $-W$ as a cobordism from $Y_0(K)$ to $ Y$; recall that $Y$ carries a contact structure we denote by $\xi$, supported by an open book with connected binding $K$ and page genus $g$.

\begin{prop}\label{gradingprop} Consider the surgery cobordism $-W: Y_0(K) \to Y$, and let $J$ be an almost-complex structure on $-W$ such that the tangents to the fiber surfaces in $Y_0(K)$ are positively $J$-invariant. Let $[\eta]$ be the homotopy class of the plane field $TY \cap J TY$ on $Y$. Then $[\xi] = [\eta] + 2g-1$. 
\end{prop}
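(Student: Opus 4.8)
The statement to be proved is Proposition~\ref{gradingprop}: for the surgery cobordism $-W: Y_0(K)\to Y$ equipped with an almost-complex structure $J$ making the fiber tangencies of $Y_0(K)$ positively $J$-invariant, the plane field $[\eta]=TY\cap J(TY)$ satisfies $[\xi]=[\eta]+2g-1$. The strategy is to relate this to the grading computation for the reversed cobordism $W: -Y_0(K)\to -Y$, which has already been carried out implicitly in Corollary~\ref{contactgradingcor}. Recall that in that corollary, using Eliashberg's symplectic structure, we showed that an almost-complex structure $J'$ on $W$ relating the fiber-tangent field $[T\widehat S]$ on $Y_0$ (viewed as a plane field on $-Y_0(K)$, carrying $\s_{1-g}$) to the contact field $[\xi]$ on $-Y$, maps the generator of $HF^+(-Y_0,\s_{1-g})$ into $HF^+_{[\xi]-1}(-Y)$; combining with Corollary~\ref{fiberedgradingcor}, the generator sits in degree $[T\widehat S]-1$ on $-Y_0(K)$. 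So the point is: \emph{orientation reversal} of the cobordism converts the shift $-1$ seen on $W$ into the shift $2g-1$ on $-W$.

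\textbf{Key steps, in order.} First I would set up the bookkeeping of plane fields under orientation reversal carefully, using the involution $[\xi]\mapsto[\xi]^*$ from the preliminaries and equation~\eqref{hopfconj}, $h(\xi,Y)+h(\xi,-Y)=1$, which is the numerical shadow of this involution. Second, I would note that by Proposition~\ref{jprop} an almost-complex structure on $-W$ (respectively $W$) is determined up to homotopy by its induced \spinc structure together with the induced plane field on one boundary component — here the relevant boundary is $Y_0(K)$, since $H^i(-W)\to H^i(Y_0(K))$ is injective for $i=2,3$ exactly as in Corollary~\ref{contactgradingcor}. The hypothesis that the fiber tangencies in $Y_0(K)$ are positively $J$-invariant pins down the restriction of $J$ to the $Y_0(K)$ end: it corresponds to the plane field $[T\widehat S]$ (with the appropriate orientation coming from the fibration of $Y_0(K)$, which as the remark after Corollary~\ref{contactgradingcor} observes is the same oriented fiber whether one uses $Y_0$ or $-Y_0$). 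Third, I would compare $J$ on $-W$ with the almost-complex structure $J'$ on $W$ from Corollary~\ref{contactgradingcor}: both restrict to plane fields on the $Y_0(K)$-boundary that are related by the orientation involution, and both are (up to homotopy) the unique such structures with the canonical \spinc structure. The grading-shift computation then reduces to: Corollary~\ref{fiberedgradingcor} says the relevant class on the $Y_0$-side lives in grading $[T\widehat S]-1$; on the $-Y_0$-side, the fiber tangent field for the reversed fibration is still $[T\widehat S]$; and the degree-shift of the map induced by $-W$ with this $J$ is obtained from that of $W$ with $J'$ by the conjugation/orientation-reversal formula. Concretely, the shift for $W: -Y_0\to -Y$ sends degree $[T\widehat S]-1$ to $[\xi]-1$, a net shift of $[\xi]-[T\widehat S]$; reversing orientation of the cobordism and applying \eqref{hopfconj} together with the fact that the fiber field and the contact field differ in Hopf invariant by the self-intersection/Euler-characteristic contribution of the page (which for a genus-$g$ page contributes $2g-1$, since a once-punctured genus-$g$ surface has Euler characteristic $1-2g$) yields $[\xi]=[\eta]+2g-1$.

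\textbf{Alternative, perhaps cleaner route.} Rather than chasing the involution formula abstractly, I would build an explicit almost-complex $4$-manifold realizing $-W$ and directly compute $h$ via \eqref{hopfinvt}. Cap off the $Y_0(K)$-end of $-W$ with a Lefschetz fibration $X$ over the disk (as in the proof of Corollary~\ref{fiberedgradingcor}), obtaining an almost-complex manifold with a single boundary component $Y$; extend $J$ across $X$ using the canonical symplectic/almost-complex structure of the Lefschetz fibration so that the fibers are $J_0$-holomorphic and $\langle c_1(J_0),[\widehat S]\rangle=2-2g$ by adjunction. Then $[\eta]=TY\cap J(TY)$ is computed, via \eqref{hopfinvt}, from $c_1^2$, $\sigma$, $\chi$ of $X\cup(-W)$, while $[\xi]$ — which by Corollary~\ref{contactgradingcor} and Eliashberg's filling is realized by capping the \emph{same} $Y_0(K)$-end but now with the cobordism oriented as $W$ to $-Y$ and then with a Lefschetz fibration on the other side — differs precisely by the discrepancy between building an almost-complex manifold with boundary $Y$ versus one with boundary $-Y$. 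This discrepancy is $2g-1$ because the two constructions share the Lefschetz piece $X$ (genus-$g$ page) but glue in $-W$ versus $W$, and the Euler characteristic / signature contributions of the $2$-handle attachments differ by exactly the amount recording the page topology; the integer $2g-1$ then drops out of the arithmetic in \eqref{hopfinvt}.

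\textbf{Main obstacle.} The genuinely delicate point is the orientation/handedness bookkeeping: making sure that ``positively $J$-invariant'' fiber tangencies, the orientation of the capped-off fiber $\widehat S$, the direction of the cobordism $-W$ versus $W$, and the sign conventions in \eqref{hopfinvt} and \eqref{hopfconj} are all mutually consistent, so that the shift comes out as $+(2g-1)$ and not, say, $-(2g-1)$ or $2g$ or $2-2g$. Everything else — the injectivity of the restriction maps on cohomology, the uniqueness statements from Proposition~\ref{jprop}, the existence of the capping Lefschetz fibration — is either already established in the excerpt or routine. So the proof I would write is short: invoke Proposition~\ref{jprop} to reduce to a computation of a single integer, then compute that integer either by the conjugation formula \eqref{hopfconj} applied to the already-known shift for $W$, or by an explicit \eqref{hopfinvt} calculation on a capped-off model, taking care with orientations throughout.
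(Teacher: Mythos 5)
Your plan takes a genuinely different route from the paper, but it stops short of an actual proof: the central integer $2g-1$ is never computed, it is asserted. The paper's argument is a direct local computation. It builds an explicit model of the $2$-handle attachment (the Gay--Stipsicz embedding of $D^2\times D^2$ in $\cee^2$ via the cut-off functions $f,g$), extends the canonical $J_0$ over this model, and reads off that the complex tangencies $\eta$ on $Y$ are a \emph{negative} contact structure near the binding while agreeing with the page-tangent field elsewhere. Since $\xi$ and $\eta$ therefore coincide away from a neighborhood of the binding and turn in opposite directions near it, the Pontryagin--Thom framed links representing them differ only by a framed copy of $K$; Lemma~\ref{trivlemma} then lets one identify that framing with the self-linking number $s\ell(K,S)=2g-1$ of the transverse binding, yielding the shift. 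None of this local analysis appears in your proposal.

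Your two proposed routes each have a concrete gap beyond the missing arithmetic. In the first, you invoke ``the fact that the fiber field and the contact field differ in Hopf invariant by the self-intersection/Euler-characteristic contribution of the page'' --- but this \emph{is} the proposition, not an available fact; on $Y$ itself there is no fibration, and ``the fiber field'' on $Y$ can only mean $\eta$, so the argument is circular. Moreover, passing from the Eliashberg structure $J'$ on $W$ to a structure on $-W$ is not straightforward: an almost-complex structure determines the ambient orientation, so $J'$ does not live on $-W$, and the candidate $-J'$ reverses the induced orientation of the boundary plane field; equation~\eqref{hopfconj} relates $h(\xi,Y)$ to $h(\xi,-Y)$ for a fixed plane field and does not by itself compare two different plane fields on the same $Y$. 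In the second route, there is an orientation mismatch in the proposed gluing: the Lefschetz fibration $X$ of Corollary~\ref{fiberedgradingcor} has $\partial X=-Y_0$, while $-W$ also has $-Y_0$ in its boundary, so $X$ and $-W$ do not glue along $Y_0$ with consistent orientations; the ``opposite'' Lefschetz piece one would need does not carry the canonical fiber-holomorphic $J_0$. Repairing either route would in effect force you into the local comparison near the binding that the paper performs explicitly, so the honest thing to say is that the proposal identifies the correct reduction (Proposition~\ref{jprop}) and the correct numerical target but omits the proof of the key step.
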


\begin{proof} Topologically, $-W$ is given by attaching a 2-handle $D^2\times D^2$ to $Y_0(K)\times 1 \subset Y_0(K)\times [0,1]$, where we glue $D^2\times S^1$ to the neighborhood of a section $\gamma$ of the fibration $Y_0(K)\to S^1$. 

On $Y_0(K)\times [0,1]$ we have an almost-complex structure $J_0$ defined by requiring that  the oriented tangents to the fiber be complex subspaces, and that the positive normal vector $R$ to the fibers has $J_0(R) = \partial_t$. 

We think of $D^2\times D^2$ as embedded in $\cee^2$ as follows (c.f. Gay-Stipsicz \cite{GayStipsicz} for a very similar construction, but with a key orientation difference). Let $f$ and $g$ be increasing smooth functions of $r$ satisfying:
\begin{itemize}
\item $f(r) = {1\over 2}$ for $r \leq {1\over 4}$, $f'(r) \geq 0$, and $f(r) = r$ for $r\geq {3\over 4}$.
\item $g(r) = r$ for $r\leq {1\over 4}$, $g'(r) \geq 0$, $g(r) < 1$ for $r< 1$, and $g(r) = 1$ for $r\geq 1$.
\end{itemize}
Then, using polar coordinates $(r_1,\theta_1, r_2,\theta_2)$ on $\cee^2$, we embed $S^1\times D^2$ in $\cee^2$ by
\[
F(\phi, r,\theta) = (f(r), \phi, g(r), \theta).
\]
Together with the copy of $D^2\times S^1$ given by $\{r_1\leq 1\mbox{ and } r_2  = 1\}$, the image of $F$ encloses a homeomorphic image of $D^2\times D^2$ in $\cee^2$ indicated schematically in Figure \ref{fig:handle} (our thanks to David Gay and Andr\'as Stipsicz for permission to use their figure).
\begin{figure}
\begin{centering}\includegraphics[width=3in]{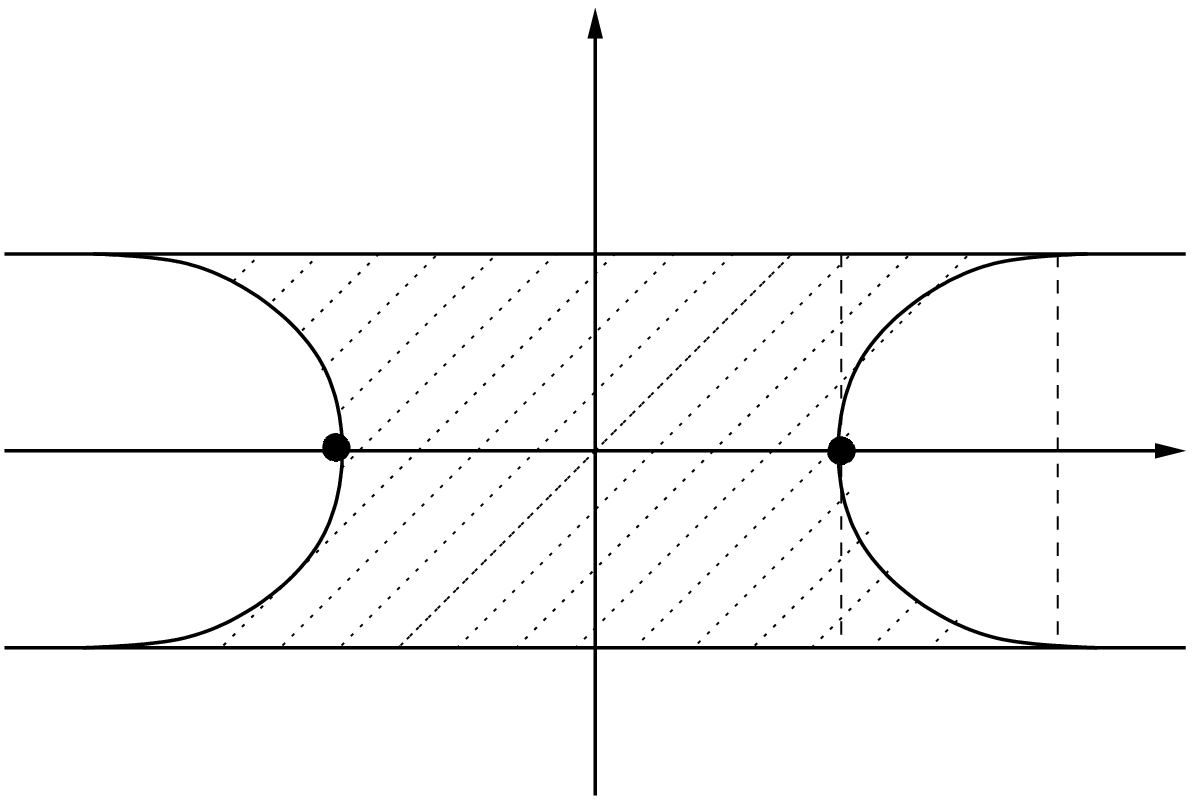}
\end{centering}
\put(-67,15){$1\over 4$}
\put(-27,15){$3\over 4$}
\put(2,61){$(r_1,\theta_1)$}
\put(-103,135){$(r_2,\theta_2)$}
\caption{\label{fig:handle}}
\end{figure}
With appropriate choices, we can embed $D^2\times S^1$ as a neighborhood of $\gamma\subset Y_0(K)\times 1$ in such a way that the almost-complex structure $J_0$ corresponds to the standard complex structure in $\cee^2$ near $\{r_1\leq 1 \mbox{ and } r_2 = 1\}$; in particular we can take the core circle $\gamma$ to correspond to $\{r_1 = 0 \mbox{ and } r_2 = 1\}$, while the intersection of the fibers with the neighborhood correspond to the disks $\theta_2 = const$. Then $J_0$ extends to the desired almost-complex structure over $-W = Y_0(K)\times [0,1] \cup D^2\times D^2$ and we must consider the complex tangencies on the new boundary.

Away from the attaching region, we still have the oriented tangents to the fiber. Near the surgery, an exercise with the embedding $F$ above shows that the complex tangencies on the surgery torus $S^1\times D^2$ (the neighborhood of the binding in $Y$) are given by 
\[
\eta = \ker(\beta = g' \,d\phi - f' d\theta)
\]
in coordinates $(\phi, r, \theta)$. Here the form $\beta$ determines the co-orientation, and hence orientation, of the planes. With our choices of $f$ and $g$, it follows that $\eta$ is a {\it negative} contact structure in the neighborhood of the binding; in particular the planes in $\eta$ make a quarter turn to the right as $r$ decreases from 1 to 0, and the oriented binding of the open book on $Y$ is negatively transverse to $\eta$.

To compare $\eta$ to the contact plane field $\xi$, recall that up to homotopy we can take $\xi$ to be tangent to the pages of the open book away from the binding. Near the binding, the planes in $\xi$ make a quarter turn to the left as $r$ decreases, so that the binding is positively transverse. Clearly, then, under the Pontryagin-Thom construction the framed links corresponding to $\xi$ and $\eta$ differ only in the neighborhood of the binding. To understand this difference, we use a special trivialization of $TY$. 

\begin{lemma}\label{trivlemma} Fix a page $S$ of the open book on $Y$. Then there is a trivialization $\tau = (e_1, e_2, e_3): Y \to TY$ with the following properties: 
\begin{enumerate}
\item The vectors $\{e_1,e_2\}$ provide an oriented trivialization of $\xi$ over the given page $S$.
\item The binding $K$ is tangent to $e_3$, and $(0,0,1)$ is a regular value of the maps $\xi^\tau$ and $\eta^\tau$. Moreover $(\eta^\tau)^{-1}(0,0,1)$ lies outside of the binding neighborhood $S^1\times D^2$. 
\end{enumerate}
\end{lemma}

Postponing the proof of the lemma for the moment, observe that the lemma implies that the difference between $[\xi]$ and $[\eta]$ can be calculated by taking the difference $[(\xi^\tau)^{-1}(0,0,1)] - [(\eta^\tau)^{-1}(0,0,1)]$, where the brackets indicate framed cobordism class. Clearly this difference is a framed copy of $K$, oriented as the boundary of $S$, with framing given by $\{e_1,e_2\}$. Since $K$ is null-homologous, we infer $[\xi]$ and $[\eta]$ differ only by the integer corresponding to this framing. To calculate the framing, we use $e_1$, say, to push off $K$ and calculate the resulting linking number. But since $\{e_1,e_2\}$ gives a trivialization of $\xi$ over $S$, this linking number is by definition the {\it self-linking number} $s\ell(K,S)$ of the transverse knot $K$. It is easy to see that the self-linking number of the binding of an open book is equal to $2g-1$ (for example recall that $s\ell(K,S)$ is minus the Euler number of $\xi\simeq TS$ over $S$, relative to a trivialization transverse to $\partial S$), so that $[\xi] - [\eta] = 2g-1$ as plane fields on $Y$.
\end{proof}

\begin{proof}[Proof of Lemma \ref{trivlemma}] Construct a trivialization $\tau$ over $S$ by taking $\{e_1,e_2\}$ to trivialize $\xi|_S$ and $e_3$ to be a Reeb vector field for a contact form $\alpha$ adapted to the open book.  Let $\sigma$ be a trivialization of $TY$ over all of $Y$; then the obstruction to homotopy between $\sigma$ and $\tau$ over $S$ lies in $H^1(S; \zee/2)$. This obstruction can be eliminated by twisting $\{e_1,e_2\}$ along circles generating the homology of $S$ while preserving the property (1) of $\tau$. Thus after suitable twisting, $\tau$ extends over the rest of $Y$, satisfying (1).

Property (2) is easily arranged by local perturbations, given our picture of $\xi$ and $\eta$ near $K$.
\end{proof}

Observe that when we consider $[\xi]$ and $[\eta]$ as plane fields on $-Y$ we have $[\eta] = [\xi] + 2g-1$ since reversing orientation negates framings.

\begin{cor}\label{uniqcor} The only elements of $HF^+(-Y)$ that map nontrivially to $\ul{HF}^+(-Y_0(K), \s_{1-g}; \F[C_n])$ under the surgery cobordism lie in degree $[\xi] + 2g-2$.
\end{cor}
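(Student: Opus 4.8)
The plan is to reduce the statement to a computation of plane-field gradings and then assemble Proposition~\ref{gradingprop}, Corollary~\ref{fiberedgradingcor}, Proposition~\ref{jprop} and the grading-compatibility of cobordism maps. First I would recall that, as observed just before Proposition~\ref{gradingprop}, the homomorphism $G$ is the map $F_{-W}$ induced by the surgery cobordism $-W$, read as a cobordism $-Y\to -Y_0(K)$, post-composed with the change of coefficients $\F[T,T^{-1}]\to\F[C_n]$. The latter is grading preserving (each $T^k$ has grading $0$), so it suffices to identify the plane-field gradings of those classes in $HF^+(-Y)$ that can map nontrivially into $\ul{HF}^+(-Y_0(K),\s_{1-g};\F[C_n])$ under $F_{-W}$.

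Next I would pin down the grading of the target summand. By Lemma~\ref{fulltwistlemma} the complex computing $\ul{HF}^+(-Y_0(K),\s_{1-g};\F[C_n])$ is built from the same Heegaard triple as in the untwisted case and is generated over $\F[C_n]$ by the single intersection point $\U$; since the Gripp--Huang grading of a generator depends only on the associated gradient-like vector field and not on the coefficient module, this summand lies in one grading, equal to the grading of $\U$. Applying Corollary~\ref{fiberedgradingcor} to the fibered manifold $-Y_0(K)$ (whose oriented fiber $\widehat{S}$ has genus $g>1$, cf.\ the remark after Corollary~\ref{contactgradingcor}), that grading is $[T\widehat{S}]-1\in\P(-Y_0(K))$, where $[T\widehat{S}]$ is the class of the plane field of oriented tangents to the fibers.

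The core step is grading compatibility together with a uniqueness argument. If $x\in HF^+_p(-Y)$ is homogeneous and $F_{-W}(x)$ has a nonzero component in this summand, then by the last property of the plane-field grading recalled before Lemma~\ref{conjlemma} the classes $p\in\P(-Y)$ and $[T\widehat{S}]-1\in\P(-Y_0(K))$ are related by $-W$. As already noted before Proposition~\ref{gradingprop}, Proposition~\ref{jprop} applies to $-W$ with respect to the boundary component $-Y_0(K)$, so an almost-complex structure on $-W$ extending the appropriate \spinc structure is determined up to homotopy by the plane field it induces on $-Y_0(K)$; hence at most one such $p$ exists and it remains only to name it. Proposition~\ref{gradingprop} provides an almost-complex structure $J$ on $-W$ with the fiber tangents in $Y_0(K)$ positively $J$-invariant and with $TY\cap JTY$ representing a class $[\eta]$ satisfying $[\eta]=[\xi]+2g-1$ as plane fields on $-Y$ (using the orientation-reversal observation stated after the proof of Proposition~\ref{gradingprop}). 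Thus $J$ relates $[T\widehat{S}]$ on $-Y_0(K)$ to $[\xi]+2g-1$ on $-Y$, and I would finish, exactly as in the proof of Corollary~\ref{fiberedgradingcor}, by modifying $J$ with a Hopf twist supported near $-Y_0(K)$: since the relevant obstruction group vanishes the twist propagates across $-W$, so replacing $[T\widehat{S}]$ by $[T\widehat{S}]-1$ at that end simultaneously replaces $[\eta]$ by $[\eta]-1=[\xi]+2g-2$ at the $-Y$ end. This gives $p=[\xi]+2g-2$.

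The step I expect to be the main obstacle is this last propagation argument: one must verify carefully that the $\zee$-action on homotopy classes of plane fields is compatible with $-W$ in the precise sense that a Hopf twist inserted near one boundary component shifts the plane field induced at the other component by the same integer and in the same direction, and one must be vigilant about the orientation-dependence of the $\zee$-action---in particular that the relation $[\eta]=[\xi]+2g-1$ is used in $\P(-Y)$ rather than in $\P(Y)$, and that the fibered manifold to which Corollary~\ref{fiberedgradingcor} is applied is $-Y_0(K)$ and not $Y_0(K)$. Since all of this bookkeeping has essentially already been carried out in the proof of Corollary~\ref{fiberedgradingcor}, I do not anticipate genuinely new difficulties.
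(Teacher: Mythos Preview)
Your proposal is correct and follows essentially the same route as the paper: identify the grading of the target summand as $[T\widehat{S}]-1$ via Corollary~\ref{fiberedgradingcor}, use Proposition~\ref{jprop} on $-W$ to get uniqueness of the relating almost-complex structure, and invoke Proposition~\ref{gradingprop} (together with the orientation-reversal remark) to name the unique source grading. The paper's proof is more terse and simply asserts that $[T\widehat{S}]$ is related through $-W$ only to $[\xi]+2g-1$, leaving the shift by $-1$ implicit; your explicit Hopf-twist propagation spells out what the paper takes for granted, and your caution about orientations is well placed but does not introduce any new ingredient.
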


\begin{proof} Since the target group is supported in degree $[T\Shat] -1$ this is another application of Proposition \ref{jprop}, this time with the orientation reversed on the cobordism. Indeed, the result just proved shows that $[T\Shat]$ is related through $-W$ only to $[\xi] +2g-1$.
\end{proof}

We phrase the following in terms of the grading of the contact invariant $c(\xi)$, since that seems more natural from the point of view of Heegaard Floer theory than the grading corresponding to the homotopy class $[\xi]$ itself. 

\begin{theorem}\label{boundthm}  Let $[\gamma]\in \P(-Y)$ denote the grading of the contact invariant $c(\xi)$, so that $[\gamma] = [\xi] -1$ as plane fields on $-Y$. Suppose that for some $n>0$ the restriction to the canonical grading $[T\widehat{S}] - 1$ of the map in the surgery triangle,
\[
 F: \ul{HF}^+_{[T\widehat{S}]-1}(-Y_0(K); \F[C_n])\to HF^+(-Y_{-1/n}(K); \F),\]
  vanishes. Then 
\[
\dim_\F(\coker(U)\cap HF^+_{[\gamma]+2g-1}(-Y; \F)) \geq n. \]
If $\s_\xi$ is self-conjugate, then 
\[
\dim_\F(\coker(U)\cap HF^+_{[\gamma]+2g-1}(-Y; \F)) \geq 2n.
\]
\end{theorem}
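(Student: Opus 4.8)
The plan is to play the vanishing hypothesis against exactness of the surgery triangle and to read off the two inequalities from the rank and the absolute grading of the $\s_{1-g}$ summand of $\ul{HF}^+(-Y_0(K);\F[C_n])$. I take $g>1$ throughout; the genus one case goes through verbatim after replacing $\F$ by the Novikov field $\Lambda$, exactly as in the proof of Theorem~\ref{basicthm}.

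I would first prove the bound $\geq n$. Because the grading $[T\widehat{S}]-1$ lies in the $\zee$-orbit of plane fields over $\s_{1-g}$, one has $\ul{HF}^+_{[T\widehat{S}]-1}(-Y_0(K);\F[C_n]) = \ul{HF}^+(-Y_0(K),\s_{1-g};\F[C_n])$, and by Lemma~\ref{fulltwistlemma} together with Corollary~\ref{fiberedgradingcor} this is isomorphic to $\F[C_n]$ --- in particular it has dimension $n$ over $\F$, and $U$ acts on it as zero. The assumed vanishing of $F$ on this summand, combined with exactness of the triangle at $\ul{HF}^+(-Y_0(K);\F[C_n])$, shows the entire summand is contained in $\im G$; and Corollary~\ref{uniqcor} says the only part of the source that contributes to it is the graded piece $HF^+_{[\xi]+2g-2}(-Y)=HF^+_{[\gamma]+2g-1}(-Y)$, using $[\gamma]=[\xi]-1$. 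Since $U$ is trivial on the target summand and $G$ is $\F[U]$-linear, the resulting surjection $HF^+_{[\gamma]+2g-1}(-Y)\twoheadrightarrow\F[C_n]$ descends to a surjection from $\coker(U)\cap HF^+_{[\gamma]+2g-1}(-Y;\F)$, whence $\dim_\F(\coker(U)\cap HF^+_{[\gamma]+2g-1}(-Y;\F))\geq n$.

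For the self-conjugate refinement I would bring in conjugation symmetry. If $\s_\xi=\overline{\s_\xi}$ then $2c_1(\s_\xi)=0$, so $c_1(\s_\xi)$ is torsion and the $\zee$-orbit of plane fields over $\s_\xi$ is a free $\zee$-set; since the orientation-reversal involution $[\,\cdot\,]\mapsto[\,\cdot\,]^*$ of $\P(-Y)$ commutes with the $\zee$-action (recorded before Lemma~\ref{conjlemma}) and squares to the identity, it must act as the identity on this orbit. Hence, by Lemma~\ref{conjlemma}, conjugation is a \emph{grading-preserving} automorphism of $HF^+(-Y,\s_\xi)$, and by its naturality with respect to cobordism maps it intertwines $G$ with itself and $F$ with itself, while interchanging the summands $\ul{HF}^+(-Y_0(K),\s_{1-g};\F[C_n])$ and $\ul{HF}^+(-Y_0(K),\s_{g-1};\F[C_n])$ of the target (these are distinct $\spinc$ structures when $g>1$). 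Consequently $F$ also vanishes on the $\s_{g-1}$ summand, so that summand --- again isomorphic to $\F[C_n]$, by Lemma~\ref{fulltwistlemma} for the oppositely oriented fibration --- is also contained in $\im G$, and, conjugation being grading-preserving on the source, it too is hit only from $HF^+_{[\gamma]+2g-1}(-Y)$. Since $\s_{1-g}\neq\s_{g-1}$, the two summands meet trivially; each lies in the image of $G$ restricted to $HF^+_{[\gamma]+2g-1}(-Y)$, hence so does their direct sum, of dimension $2n$. Descending to $\coker(U)$ as before gives $\dim_\F(\coker(U)\cap HF^+_{[\gamma]+2g-1}(-Y;\F))\geq 2n$.

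I expect the delicate point to be the grading bookkeeping in the self-conjugate half: confirming that the $\s_{g-1}$ component of $G$ really does land in the \emph{same} absolute grading $[\gamma]+2g-1$ on $-Y$ as the $\s_{1-g}$ component. This rests on the triviality of the involution $[\,\cdot\,]^*$ over a self-conjugate $\spinc$ structure and on the compatibility of Gripp--Huang's plane-field grading with conjugation furnished by Lemma~\ref{conjlemma}. Once the gradings are aligned, the doubling itself is merely the elementary observation that two distinct $\spinc$ summands, each individually contained in $\im G$, together span a $2n$-dimensional subspace of that image.
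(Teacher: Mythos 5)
Your proof is correct and follows the paper's own strategy closely: translate the vanishing of $F$ into surjectivity of $G$ onto the $\s_{1-g}$ summand, identify that summand with $\F[C_n]$ concentrated in grading $[T\widehat{S}]-1$ via Lemma~\ref{fulltwistlemma} and Corollary~\ref{fiberedgradingcor}, localize the source to grading $[\gamma]+2g-1$ via Corollary~\ref{uniqcor}, and use $U$-equivariance to pass to $\coker U$. The only real variation is in the self-conjugate half, where to see that the orientation-reversal involution fixes the orbit of plane fields over $\s_\xi$ you use the tidy observation that a $\zee$-equivariant involution of a $\zee$-torsor must be the identity, whereas the paper instead quotes Gompf's classification and notes that orientation reversal preserves the Hopf invariant (and, similarly, you deduce the source grading for the $\s_{g-1}$-component from naturality of conjugation rather than rerunning Corollary~\ref{uniqcor} with $-J$); these are equivalent, with yours being marginally more elementary, and the doubling step is then identical to the paper's.
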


\begin{proof} The hypothesis holds if and only if the map $G_{-W}: HF^+(-Y, \F)\to \ul{HF}^+(-Y_0(K); \F[C_n])$ maps onto the summand in degree $[T\Shat]-1$.  Corollary \ref{uniqcor} shows that the only contribution to the image of the map $G_{-W}$ in that degree comes from its restriction  to  grading $[\xi]+2g-2 = [\gamma] + 2g-1$. By Lemma \ref{fulltwistlemma} and Corollary \ref{fiberedgradingcor} the Floer homology of $-Y_0(K)$ in degree $[T\Shat] -1$ has dimension $n$ over $\F$ and consists of elements in the cokernel of $U$. Since cobordism maps are $U$-equivariant, the first statement follows.

For the strengthened conclusion in the self-conjugate case, begin by noting that
by conjugation invariance, our hypotheses imply that the restriction of $F$ to the summand corresponding to $\s_{g-1}$ also vanishes. Moreover, from Lemma \ref{conjlemma}, if $M$ is fibered then $HF^+(M, \s_{g-1})$ is supported in degree $[T\Shat]^*-1$. Observe that if $J$ is the (unique) almost-complex structure on $-W: -Y\to -Y_0(K)$ inducing the tangent field $[T\Shat]$ on $-Y_0(K)$, and therefore restricting to $[\eta]= [\xi] +2g-1$ on $-Y$, then $-J$ induces $[T\Shat]^*$ and restricts to $[\eta]^*$. The same argument as above then shows $HF^+_{[\eta]^*-1}(-Y)$ surjects to $\ul{HF}^+_{[T\Shat]^*-1}(-Y_0(K)) \cong \F^n$, by the unique component of $G_{-W}$ connecting these two groups.  We claim that $[\eta]^* - 1 = [\eta]-1$, so that the single group $HF^+_{[\eta]-1}(-Y)$ must surject to $\F^{2n}$.

For this, observe that if a plane field $\zeta$ has the property that $c_1(\zeta)$ is a torsion class (which is certainly true if $\s_\zeta$ is self-conjugate), then the homotopy class of $\zeta$ is characterized by the \spinc structure $\s_\zeta$ together with the Hopf invariant of $\zeta$ \cite{Gompf}. Reversing the orientation of $\zeta$ has the effect of conjugating $\s_\zeta$, but preserves the Hopf invariant. (Put another way, conjugation of a torsion \spinc structure preserves the absolute $\cue$-grading.)

Now suppose $\s_\xi$ is self-conjugate, and observe that $\s_\eta = \s_\xi$ and has  Chern class which is 2-torsion. Then by the previous remark we see $[\eta] = [\eta]^*$, hence we must have a surjection
\[
HF^+_{[\eta]-1}(-Y)\to \ul{HF}^+(-Y_0(K), \s_{1-g}; \F[C_n]) \oplus \ul{HF}^+(-Y_0(K), \s_{g-1}; \F[C_n]).
\]
We conclude by noting that the target group above is $\F^{2n}$ and has trivial $U$-action, while $[\eta]-1 = [\xi]+2g-2 = [\gamma] + 2g-1$.\end{proof}

Let $M$ be any oriented 3-manifold, and $[\zeta]\in \P(M)$ a homotopy class of oriented plane field. We define
\[
\mathcal{K}(M, [\zeta]) = \dim_\F(\ker(U)\cap HF^{+,red}_{[\zeta]}(M;\F)),
\]
and dually
\[
\mathcal{K}^*(M, [\zeta]) = \dim_\F(\coker(U)\cap HF^{+,red}_{[\zeta]}(M;\F))
\]
(though by structure theorems for $HF^+$, the restriction to reduced Floer homology is redundant in the case of $\mathcal{K}^*$). 

\begin{cor}\label{mintwistcor} Let $\xi$ be a contact structure on a 3-manifold $Y$ supported by an open book with connected binding $K$. Let $\xi_n$ denote the contact structure obtained by adding $n$ right Dehn twists along the boundary of the page, and let $[\gamma]\in\P(-Y)$ denote the absolute grading of the contact invariant $c(\xi)$. Then
\[
n>\mathcal{K}^*(-Y,[\gamma]+ 2g-1) \quad \implies \quad \mbox{$\xi_n$ is tight}
\]

If $\s_\xi$ is self-conjugate, then
\[
2n>\mathcal{K}^*(-Y, [\gamma]+2g-1) \quad \implies \quad \mbox{$\xi_n$ is tight.}
\]
\end{cor}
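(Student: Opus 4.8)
\emph{Proof sketch.} The plan is to prove the contrapositive by stringing together Proposition~\ref{tightprop}, the grading computations of the previous subsection, and Theorem~\ref{boundthm}. As throughout, the argument is run with coefficients in $\F[C_n]$ when the page genus satisfies $g>1$ and in the Novikov algebra $\Lambda_\omega[C_n]$ (with $\omega$ Poincar\'e dual to a meridian of the binding $K$) when $g=1$; I would write out the case $g>1$ and leave the cosmetic substitution of $\Lambda$ for $\F$ to the genus one reader.

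Suppose $\xi_n$ is not tight, hence overtwisted. By Proposition~\ref{tightprop}, overtwistedness forces the restriction of the surgery-triangle map
\[
F\colon \ul{HF}^+(-Y_0(K),\s_{1-g};\F[C_n])\to HF^+(-Y_{-1/n}(K);\F)
\]
to the $\s_{1-g}$ summand to vanish. By Lemma~\ref{fulltwistlemma} and Corollary~\ref{fiberedgradingcor}, this $\s_{1-g}$ summand is precisely the graded piece $\ul{HF}^+_{[T\widehat{S}]-1}(-Y_0(K);\F[C_n])$, so the hypothesis of Theorem~\ref{boundthm} is met. That theorem then yields $\dim_\F\big(\coker(U)\cap HF^+_{[\gamma]+2g-1}(-Y;\F)\big)\geq n$, and $\geq 2n$ in the case that $\s_\xi$ is self-conjugate. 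It remains to recognize the left-hand side as $\mathcal{K}^*(-Y,[\gamma]+2g-1)$: since the connecting maps are $U$-equivariant and $U$ acts invertibly on $HF^\infty$, the image of $HF^\infty(-Y)\to HF^+(-Y)$ lies in the image of $U$, so $\coker(U)$ on $HF^+$ agrees with $\coker(U)$ on $HF^{+,red}$ in every grading — this is the parenthetical remark accompanying the definition of $\mathcal{K}^*$. Hence $\mathcal{K}^*(-Y,[\gamma]+2g-1)\geq n$ (resp.\ $\geq 2n$), contradicting the hypothesis $n>\mathcal{K}^*(-Y,[\gamma]+2g-1)$ (resp.\ $2n>\mathcal{K}^*(-Y,[\gamma]+2g-1)$). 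So $\xi_n$ is tight.

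For the genus one case one only needs that $\mathcal{K}^*$ is unchanged upon passing to $\Lambda_\omega$ coefficients: since $[\omega]=0$ in $H^2(Y;\R)$, the universal coefficient theorem gives $\ul{HF}^+(-Y;\Lambda_\omega)\cong HF^+(-Y)\otimes_\F\Lambda$ compatibly with the absolute grading and the $U$-action, so the dimension count is the same over $\Lambda$ and over $\F$. The step I expect to need the most care is not the assembly above — which is purely formal — but the grading bookkeeping: checking that the $\s_{1-g}$ summand detected by Proposition~\ref{tightprop} is exactly the degree $[T\widehat{S}]-1$ piece appearing in Theorem~\ref{boundthm}, and that the target degree $[\gamma]+2g-1$ (with $[\gamma]=[\xi]-1$ on $-Y$) is the one singled out by Proposition~\ref{gradingprop} and Corollary~\ref{uniqcor}. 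All of these identifications were established in the preceding subsections, so once they are cited correctly the corollary follows at once.
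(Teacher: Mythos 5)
Your proof is correct and follows the same route as the paper's (the paper's proof is the one-line ``Combine Theorem \ref{boundthm} and Proposition \ref{tightprop}''); you have simply written it out in contrapositive form and filled in the supporting identifications. The two checks you flag as needing care---that the $\s_{1-g}$ summand coincides with the $[T\widehat{S}]-1$ graded piece (via Lemma \ref{fulltwistlemma} and Corollary \ref{fiberedgradingcor}), and that $\coker U$ on $HF^+$ equals $\coker U$ on $HF^{+,\mathrm{red}}$---are exactly right and correctly resolved.
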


\begin{proof} Combine Theorem \ref{boundthm} and Proposition \ref{tightprop}.
\end{proof}

\begin{cor}\label{twistboundcor} Let $K\subset Y$ be a fibered knot, and let $\tau_K$ be the fractional Dehn twist coefficient of the monodromy of $K$. Then we have an inequality
\[
-1-\mathcal{K}^*(-Y, [\gamma] + 2g-1) \leq \tau_K \leq 1+\mathcal{K}(-Y, [\gamma]),
\]
where $[\gamma]$ is the absolute grading of the contact invariant of the contact structure supported by the open book determined by $K$.

If the \spinc structure associated to this contact structure is self-conjugate, then in fact
\[
-1-\lfloor{\ts {1\over 2}}\mathcal{K}^*(-Y,[\gamma]+2g-1) \rfloor \leq \tau_K \leq  1+\lfloor{\ts {1\over 2}}\mathcal{K}(-Y, [\gamma])\rfloor.
\]
\end{cor}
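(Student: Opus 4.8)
The plan is to obtain the lower bound on $\tau_K$ directly from the material already assembled, and the upper bound by applying it to the mirror. For the lower bound, Corollary~\ref{mintwistcor} tells us that $\xi_n$ is tight as soon as $n>\mathcal{K}^*(-Y,[\gamma]+2g-1)$ (and as soon as $2n>\mathcal{K}^*(-Y,[\gamma]+2g-1)$ when $\s_\xi$ is self-conjugate). Since $\xi_n$ is supported by the open book with monodromy $\phi_K\circ t_\partial^n$, the Honda--Kazez--Matic theorem used in the proof of Theorem~\ref{basicthm} gives $\tau(\phi_K\circ t_\partial^n)\ge 0$, which by the boundary-twisting property of the fractional Dehn twist coefficient reads $\tau_K\ge -n$. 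Taking $n=\mathcal{K}^*(-Y,[\gamma]+2g-1)+1$ (respectively $n=\lfloor{\ts{1\over 2}}\mathcal{K}^*(-Y,[\gamma]+2g-1)\rfloor+1$ in the self-conjugate case) gives the stated lower bounds.

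For the upper bound I would reverse orientation. As in the proof of Theorem~\ref{basicthm}, open books for $Y$ correspond to open books for $-Y$ by inverting the monodromy, so $(S,\phi_K^{-1})$ is a genus $g$ open book with connected binding $K$ for $-Y$; let $\bar\xi$ denote the contact structure it supports (note that $\bar\xi$ a priori depends on the chosen open book and not only on $\xi$, since inverting the monodromy turns positive stabilizations into negative ones), and let $[\bar\gamma]\in\P(Y)$ be the absolute grading of $c(\bar\xi)\in HF^+(-(-Y))=HF^+(Y)$, so $[\bar\gamma]=[\bar\xi]-1$ by Corollary~\ref{contactgradingcor}. Applying the lower bound just proved to the fibered knot $K\subset -Y$ yields $\tau(\phi_K^{-1})\ge -1-\mathcal{K}^*(Y,[\bar\gamma]+2g-1)$, and since homogeneity of $\tau$ gives $\tau(\phi_K^{-1})=-\tau_K$, this becomes $\tau_K\le 1+\mathcal{K}^*(Y,[\bar\gamma]+2g-1)$; in the self-conjugate case the same computation gives $\tau_K\le 1+\lfloor{\ts{1\over 2}}\mathcal{K}^*(Y,[\bar\gamma]+2g-1)\rfloor$.

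It then remains to identify $\mathcal{K}^*(Y,[\bar\gamma]+2g-1)$ with $\mathcal{K}(-Y,[\gamma])$, which is the technical heart of the proof, and where I expect the real work to lie. Two ingredients enter. First, a duality: combining the Ozsv\'ath--Szab\'o isomorphism $HF^+_d(Y,\s)\cong HF^-_{-d-2}(-Y,\s)$ \cite{HolDiskTwo} (which transposes the $U$-action) with the $U$-equivariant connecting-map isomorphism $HF^{+,red}_{e}(M)\cong HF^{-,red}_{e-1}(M)$ produces a degree-reversing isomorphism of reduced groups carrying $\coker(U)\cap HF^{+}_{D}(Y)$ onto $\ker(U)\cap HF^{+,red}_{-D-1}(-Y)$ (recall that for $\coker U$ the restriction to reduced homology is automatic), so that $\mathcal{K}^*(Y,D)=\mathcal{K}(-Y,-D-1)$ under the correspondence $\P(Y)\leftrightarrow\P(-Y)$. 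Second, a grading computation showing that $-([\bar\gamma]+2g-1)-1=[\gamma]$ in $\P(-Y)$: this is where the page genus disappears, because, exactly as in Proposition~\ref{gradingprop} and Corollary~\ref{uniqcor}, the plane field $\bar\xi$ differs from $\xi$ in a neighbourhood of the binding by the self-linking number $2g-1$ of the binding, and this framing shift is negated under orientation reversal; tracking it through \eqref{hopfconj} and Corollary~\ref{contactgradingcor} shows that the $2g-1$ appearing in $[\bar\gamma]+2g-1$ is cancelled precisely by the $2g-1$ coming from the binding, landing exactly on $[\gamma]$. Finally, in the self-conjugate case the property of $\s_\xi$ being self-conjugate ($c_1$ is $2$-torsion) is insensitive to orientation reversal, so it holds for $\bar\xi$ as well and the doubling of Lemma~\ref{conjlemma} feeds through the duality, producing the floor-of-half refinements. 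The main obstacle throughout is this last grading bookkeeping across the orientation reversal; everything else is an assembly of results already in hand.
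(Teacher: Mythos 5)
Your proposal follows the paper's own proof essentially step for step: the lower bound via Corollary~\ref{mintwistcor} combined with Honda--Kazez--Matic, the upper bound by applying the lower bound to $(S,\phi_K^{-1})$ on $-Y$, and the identification $\mathcal{K}^*(Y,[\bar\gamma]+2g-1)=\mathcal{K}(-Y,[\gamma])$ via a duality lemma (the paper's Lemma~\ref{duallemma}, which you have correctly reformulated in Hopf-invariant degrees as $\mathcal{K}^*(Y,D)=\mathcal{K}(-Y,-D-1)$) together with the grading relation $[\xi]-[\bar\xi]=2g-1$ from Proposition~\ref{gradingprop}. The only stylistic difference is that you track $\cue$-degrees via the Hopf invariant (which implicitly presumes torsion $c_1$), whereas the paper phrases the shifts directly in $\P(Y)$ to cover non-torsion \spinc structures as well; the content is identical.
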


\begin{proof}[Proof of Theorem \ref{thm3}] By definition, $\mathcal{K}^*(-Y,[\gamma]+2g-1)$ and $\mathcal{K}(-Y, [\gamma])$ are quotient- and sub-spaces of $HF^{red}_{2g-1-h(\xi)}(-Y,\s_\xi)$ and $ HF^{red}_{-h(\xi)}(-Y,\s_\xi)$, respectively, where we assume $c_1(\s_\xi)$ is torsion and use the fact that in this case $[\gamma]$ is identified with $-h(\xi)$. Hence the statement of the theorem follows immediately from the first inequality in Corollary \ref{twistboundcor}.
\end{proof}

\begin{proof}[Proof of Corollary \ref{twistboundcor}] For the first statement, observe that adding $1+\mathcal{K}^*(-Y, [\gamma] +2g-1)$ right twists to the monodromy of $K$ produces an open book supporting a tight contact structure by the previous corollary. The new monodromy has twist coefficient $\tau_K +1+ \mathcal{K}^*(-Y, [\gamma] +2g-1)$, which must be nonnegative since the supported contact structure is tight \cite{HKMVeer2}.

For the other inequality, observe that the fibered knot $K$ induces an open book on $-Y$ with oriented fiber $S$ and monodromy $\phi_K^{-1}$. Let $\bar{\xi}$ denote the associated positive contact structure on $-Y$ and write $[\bar{\gamma}]\in \P(Y)$ for the grading of the associated contact element. Then the result just obtained says 
\begin{equation}\label{oppineq}
-\tau_K = \tau_{\phi_K^{-1}} \geq -1- \mathcal{K}^*(Y, [\bar{\gamma}] + 2g-1) = -1-\mathcal{K}^*(Y, [\bar{\xi}] + 2g-2)
\end{equation}
by Corollary \ref{contactgradingcor}. To translate this back to an inequality involving the Floer homology of $-Y$ we make use of the following.

\begin{lemma}\label{duallemma} For any 3-manifold $M$ and oriented plane field $[\zeta]$ we have
\[
\mathcal{K}^*(M, [\zeta])=\mathcal{K}(-M, [\zeta]-2).
\]
\end{lemma}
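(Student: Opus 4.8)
The plan is to read the identity off from the orientation-reversal duality of Heegaard Floer homology: the operator $U$ is essentially self-dual under that duality, and the duality interchanges cokernel with kernel, which is precisely the passage from $\mathcal{K}^*$ to $\mathcal{K}$. Fix $\s=\s_\zeta$, and take all gradings to be elements of $\P(-)$ (for torsion $\s$ one may equivalently work with the $\Q$-grading and track orientation reversal via \eqref{hopfconj}). First I would reduce both sides to reduced Floer homology. From the structure theorem $HF^+(M,\s)\cong(\mathcal{T}^+)^{b}\oplus HF^{+,red}(M,\s)$ the operator $U$ is surjective on each tower $\mathcal{T}^+$, so $\coker(U)$ detects only the reduced summand; this is the parenthetical remark in the definition, and it gives $\mathcal{K}^*(M,[\zeta])=\dim_\F\coker(U\colon HF^{+,red}_{[\zeta]+2}(M,\s)\to HF^{+,red}_{[\zeta]}(M,\s))$. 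Dually, in $HF^-(-M,\s)\cong\F[U]^{b}\oplus HF^{-,red}(-M,\s)$ the operator $U$ is injective on each $\F[U]$, so $\ker(U)$ acting on $HF^-(-M,\s)$ detects only $HF^{-,red}(-M,\s)$.

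Next I would invoke Ozsv\'ath--Szab\'o's orientation-reversal duality in its $U$-equivariant, absolutely graded form (see e.g.\ \cite{HolDiskTwo}): in each grading there is a perfect pairing
\[
HF^+_{[\zeta]}(M,\s)\otimes HF^-_{[\zeta]-3}(-M,\s)\longrightarrow\F,\qquad \langle Ux,y\rangle=\langle x,Uy\rangle,
\]
where $[\zeta]-3$ denotes the plane field $[\zeta]$, viewed on $-M$, shifted by $-3$; this shift is just the usual $HF^+_d(M)\cong HF^-_{-d-2}(-M)$ rewritten on the level of plane fields using \eqref{hopfconj}. Since the pairing carries $U$ to its transpose, and $\coker(f^{\mathrm{T}})\cong(\ker f)^{*}$ for a linear map $f$ of finite-dimensional spaces, restricting to the reduced summands (which are dual to one another under the pairing, the towers $\mathcal{T}^+$ and $\F[U]$ being paired off) yields
\[
\coker\bigl(U\colon HF^{+,red}_{[\zeta]+2}(M,\s)\to HF^{+,red}_{[\zeta]}(M,\s)\bigr)\cong\ker\bigl(U\colon HF^{-,red}_{[\zeta]-3}(-M,\s)\to HF^{-,red}_{[\zeta]-5}(-M,\s)\bigr)^{*}.
\]
Hence $\mathcal{K}^*(M,[\zeta])=\dim_\F\bigl(\ker U\cap HF^{-,red}_{[\zeta]-3}(-M,\s)\bigr)$.

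Finally I would pass from $HF^{-,red}$ back to $HF^{+,red}$ on $-M$ using the connecting homomorphism $\delta$ of the exact sequence relating $HF^-$, $HF^\infty$, $HF^+$ \cite{HolDisk}: it is $U$-equivariant and restricts to an isomorphism $HF^{+,red}_{[\xi]}(-M,\s)\cong HF^{-,red}_{[\xi]-1}(-M,\s)$ of degree $-1$. This identifies $\ker U\cap HF^{-,red}_{[\zeta]-3}(-M,\s)$ with $\ker U\cap HF^{+,red}_{[\zeta]-2}(-M,\s)$, which is by definition $\mathcal{K}(-M,[\zeta]-2)$. Chaining the three displayed facts proves the lemma; in particular the ``$-2$'' of the statement is the sum $-3+1$ of the duality shift and the shift in the connecting map.

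The only real content is the bookkeeping of grading shifts, and the one point to be careful about is that Heegaard Floer duality is $U$-equivariant in the \emph{transpose} sense, so that it exchanges $\coker$ with $\ker$ rather than $\coker$ with $\coker$; this is exactly why $\mathcal{K}^*$ on $M$ becomes $\mathcal{K}$ (and not $\mathcal{K}^*$) on $-M$. It is worth noting that while the restriction to reduced homology is automatic for $\mathcal{K}^*$ and for $\ker U$ on $HF^-$, it is genuinely needed in the definition of $\mathcal{K}$, consistently with the chain above terminating at $\ker U\cap HF^{+,red}(-M)$; and that the non-torsion case is word-for-word identical once one uses the $\P(-)$-graded forms of the duality isomorphism and of the $HF^\infty$ exact sequence.
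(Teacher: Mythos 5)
Your argument follows the same chain as the paper's: reduce to the reduced groups, apply the orientation-reversal duality with its $U$-transpose equivariance to swap a cokernel on $M$ for a kernel on $-M$ with a shift of $-3$, and finish with the degree-$(-1)$ isomorphism $HF^{+,red}\cong HF^{-,red}$ to land at $[\zeta]-2$. That bookkeeping, including why $\mathcal{K}^*$ becomes $\mathcal{K}$ and not $\mathcal{K}^*$, is correct and matches the paper (which runs the two steps in the opposite order and phrases the dual as the cochain complex $CF_-(-M)$ rather than a perfect pairing, but these are cosmetic).

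The gap is in the middle step, where you invoke ``Ozsv\'ath--Szab\'o's orientation-reversal duality in its $U$-equivariant, absolutely graded form'' and cite \cite{HolDiskTwo}, adding that the non-torsion case ``is word-for-word identical once one uses the $\P(-)$-graded forms of the duality isomorphism.'' That $\P(-)$-graded form of the duality --- the assertion that $HF^+_{[\zeta]}(M)$ pairs with $HF^-_{[\zeta]-3}(-M)$, equivalently Equation \eqref{duality} in the paper --- is precisely the nontrivial content of the lemma, not something available in the literature. \cite{HolDiskTwo} predates both the absolutely $\Q$-graded duality of \cite{AbsGrad} and the Gripp--Huang grading entirely, so it cannot be cited for this. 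For torsion $\s_\zeta$ one can, as you note, revert to the $\Q$-grading and use \eqref{hopfconj}; but for non-torsion $\s_\zeta$ the Hopf invariant is not defined, and one must directly compare $\tgr^{Y}(\x)$ with $\tgr^{-Y}(\x)$ in Gripp and Huang's construction. The paper does exactly this: it shows the only orientation-sensitive step is the local modification of the gradient field near the flow line between the index $0$ and $3$ critical points (the circulatory field around an unknot), leading to $\tgr^{-Y}(\x)=\tgr^{Y}(\x)\super{-}{-Y}1$ and hence the $-3$ shift for $(U^{i+1}\x)^*$. Your proposal omits this analysis and replaces it with a circular appeal to the very statement being proved; the rest of your argument is fine, but without this step the lemma is established only for torsion $\spinc$ structures.
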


Up until this point our notation for the action of $\zee$ on oriented plane fields---which depends implicitly on the ambient orientation---has not referred to this orientation. Here we must make this more precise, and we write $[\xi] \super{+}{Y} n$ to indicate the action of $n\in \zee$ on the homotopy class of plane field $[\xi]$. Observe that $[\xi] \super{+}{-Y} n = [\xi] \super{-}{Y} n$. With this in mind, we have
\begin{eqnarray}
\tau_K = -\tau_{\phi_K^{-1}} &\leq& \mathcal{K}^*(Y, [\bar{\xi}] \super{+}{Y} (2g-2))+1 \nonumber \\
&=& \mathcal{K}(-Y, ([\bar{\xi}] \super{+}{Y} (2g-2)) \super{-}{-Y} 2 )+1\label{line2}\\
&=& \mathcal{K}(-Y, [\bar{\xi}] \super{-}{-Y} 2g)+1\label{line3}\\
&=& \mathcal{K}(-Y, [\xi] \super{-}{-Y} 1)+1\label{line4}\\
&=& \mathcal{K}(-Y, [\gamma])+1.\label{line5}
\end{eqnarray}
Here the first line is from \eqref{oppineq}. Then \eqref{line2} is from the Lemma, and gives \eqref{line3} by the remark above. For \eqref{line4}, observe that as a plane field on $Y$, the (now negative) contact structure $\bar{\xi}$ stands in the same relationship to $\xi$ as did the plane field $[\eta]$ in Proposition \ref{gradingprop}: it is homotopic to $\xi$ away from the binding, but turns right instead of left as we approach the binding. Therefore the argument from before implies $[\xi] - [\bar{\xi}] = 2g-1$ as plane fields on $Y$, hence $[\bar{\xi}] = [\xi] \super{+}{-Y} (2g-1)$ as plane fields on $-Y$. Equation \eqref{line5} is Corollary \ref{contactgradingcor} again.

The bound on $\tau_K$ when $\s_\xi$ is self-conjugate follows similarly, using the stronger conclusion in Corollary \ref{mintwistcor} for this case.
\end{proof}

\begin{proof}[Proof of Lemma \ref{duallemma}] For the purposes of this proof, we write
\[
\mathcal{K}^{\pm}(Y, [\zeta]) = 1 + \dim_\F(\ker(U) \cap HF^{\pm, red}_{[\zeta]}(Y)),
\]
and $\mathcal{K}^{\pm, *}(Y,[\zeta])$ for the analogous quantity involving the cokernel of $U$. Observe that from the $U$-equivariant isomorphism $\tau: HF^{+, red}\to HF^{-, red}$ of degree $-1$, we obtain
\[
\mathcal{K}^-(Y,[\zeta])= \mathcal{K}^+(Y, [\zeta] + 1).
\]
Recall that there is a duality between the chain complex $CF^+(Y)$ and the {\it cochain} complex $CF_-(-Y)$. In terms of the absolute grading by plane fields we claim
\begin{equation}\label{duality}
HF^+_{[\zeta]}(Y) = HF_-^{[\zeta] - 3}(-Y).
\end{equation}
This is most easily seen in the case of a torsion \spinc structure, by reverting to the absolute $\cue$ grading using the Hopf invariant. Indeed, in terms of the absolute grading, Ozsv\'ath and Szab\'o \cite{AbsGrad} show that $HF^+_n(Y) = HF_-^{-n-2}(-Y)$. If $h([\zeta], Y) = n$, then from \eqref{hopfconj} we get $h([\zeta]- 3, -Y) = h([\zeta],{-Y}) - 3 = -h([\zeta],Y) - 2 = -n-2$.

In the general case recall that the duality between $HF^+(Y)$ and $HF_-(-Y)$ is obtained by considering a Heegaard diagram $(\Sigma, \aalpha, \bbeta, w)$ for $Y$ and the corresponding diagram $(\Sigma, \bbeta, \aalpha, w) = (-\Sigma, \aalpha, \bbeta, w)$ for $-Y$. One maps a generator $U^{-i}\cdot\x$ for $CF^+(Y)$ to the corresponding generator $(U^{i+1}\cdot \x)^*$ for $CF_-(-Y)$, where the star indicates the Kronecker dual. The key, then, is to understand the difference between $\tgr^{Y}(\x)$ and $\tgr^{-Y}(\x)$, where $\tgr$ denotes Gripp and Huang's plane field grading, and the superscript indicates the ambient manifold---though we consider the results as oriented plane fields independent of the orientation on $Y$. Now, the construction in \cite{GH} proceeds by using the intersection point $\x$, together with the basepoint $w$ of the Heegaard diagram, to give a standard modification of a gradient-like vector field for the Morse function on $Y$ and obtain a nonvanishing vector field that is well-defined up to homotopy. Then taking (oriented) orthogonal complements gives the desired plane field. Consider the effect of reversing the orientation of $Y$ while negating the gradient-like vector field: if the canonical modifications to the gradient field were insensitive to this change then we would infer $\tgr^Y(\x) = \tgr^{-Y}(\x)$ as oriented plane fields. 

However, this is not quite the case. Examining the constructions of \cite{GH} shows that the result of modifying the gradient vector field in the neighborhood of a flow line connecting critical points of index 1 and 2 is canonical, i.e., independent of orientation. However, the modification in the neighborhood of the flow line between the index 0 and 3 critical points determined by the basepoint $z$ {\it does} implicitly involve the orientation. This modification proceeds in two steps: first the gradient field is modified on a $B^3$ neighborhood of the flow line to a field that vanishes on an unknot contained in $B^3$. Then we add a circulatory vector field around this unknot, cut off outside of a small neighborhood. The choice of direction of this circulatory field is dictated by the ambient orientation. An analysis of the difference between the choices, which we omit here, leads to the conclusion $\tgr^{-Y}(\x) = \tgr^Y(\x) \super{-}{-Y} 1$. (In fact, once the issue has been reduced to this local calculation, the general result follows from the case of a torsion \spinc structure together with the fact that the plane field grading lifts the absolute $\cue$ grading.)

Hence in the duality result we have 
\[
\tgr^{-Y}((U^{i+1}\x)^*) = \tgr^{-Y}(\x) \super{-}{-Y} (2i+2) = \tgr^{Y}(\x) \super{-}{-Y} 3 \super{+}{Y} 2i = \tgr^Y(U^{-i}\x) \super{-}{-Y} 3,
\]
 confirming \eqref{duality}.

To complete the proof of the Lemma, note that the dimension over $\F$ of the cokernel of the action of $U$ on $HF_-$ cohomology (in a given grading) is the same as the dimension of the kernel of the action of $U$ on $HF^-$ homology (in that grading). Therefore, \eqref{duality} implies
\begin{eqnarray*}
\mathcal{K}^{+,*}(M, [\zeta]) &=& \mathcal{K}^{-}(-M, [\zeta] -3) \\
&=& \mathcal{K}^+(-M, [\zeta] -2).
\end{eqnarray*}
\end{proof}

\section{Twisted Floer homology and the Surgery Exact Triangle}\label{trianglesec}
In this subsection we state and sketch a proof of a general  surgery exact triangle relating the (twisted) Floer homology of three $3$--manifolds obtained by Dehn filling a single manifold $M$ with torus boundary.   The discussion can be viewed as a synthesis and clarification of the literature.   

Before stating the theorem, we briefly recall that Heegaard Floer homology of a $3$--manifold $Y$ can be defined with coefficients in any $\F[H^1(Y;\Z)]$--module, by appealing to  standard constructions of homology with twisted coefficients (imported to the setting of Morse homology) and noting that the fundamental group of the configuration space of paths between the Heegaard tori is given by
$$ \pi_1(\mathcal{P}(\Ta,\Tb),\x)\cong \pi_2(\x,\x)\cong \Z\oplus H^1(Y;\Z).$$
The totally twisted Floer complex $\underline{CF}^\infty(Y)$ is thus freely generated over $\F[\Z\oplus H^1(Y;\Z)]\cong \F[U,U^{-1}]\otimes\F[ H^1(Y;\Z)] $ by $\Ta\cap\Tb$.  See \cite[Section 8]{HolDiskTwo}. The $\F[U,U^{-1}]$--module structure coming from the $\Z$ summand in $\pi_2(\x,\x)$ gives rise to a filtration (by complexes of $\F[U]$ submodules) of $\underline{CF}^\infty$ with sub-, quotient-, and subquotient complexes $\underline{CF}^-,\underline{CF}^+,\underline{\widehat{CF}}$.  We denote the collection of complexes by $\underline{CF}^\circ$.  Now   let $\Lambda$ denote the Novikov ring (which is a field, in this case)$$\Lambda:={\large{\{}} \underset{r\in \R}\Sigma  a_r \cdot t^r \ | \ a_r\in \F, \mbox{ and } \{a_r | a_r\ne 0, r< \lambda\} \text{ is finite for all } \lambda\in \R\},$$ with multiplication defined on monomials by $a_r t^r \cdot b_s t^s = a_r b_s t^{r+s}$ and extended linearly.   
A choice of two form $\omega\in \Omega^2(Y;\R)$ defines an $\F[H^1(Y)]$--module structure on $\Lambda$, where $\eta\in H^1(Y)\subset H^1(Y;\R)$ acts by:
$$\eta(\underset{r\in \R}\Sigma a_r t^r) := \underset{r\in \R}\Sigma a_r t^{(r+\int_Y \eta \wedge\omega)}.$$ Viewed  as an    $\F[H^1(Y)]$--module in this way, we denote the Novikov ring by $\Lambda_\omega$. See \cite{AiPeters} for a nice discussion. Similarly, given a closed curve $\gamma\subset Y$ we can define an $\F[H^1(Y)]$--module structure on the group algebra of the cyclic group $C_n=\Z/n\Z$ by:
$$  \eta( a_i \cdot \zeta^i):= a_i \cdot \zeta^{i+\eta([\gamma])}, \text{where} \ \zeta=e^{2\pi i /n}\in \F[C_n],$$
and $[\gamma]\in H_1(Y)$ is the homology class of the curve. When we view $\F[C_n]$ as a module in this way, we may refer to it as $\F[C_n]_\gamma$.  These definitions thus allow us to speak of Floer homology with coefficients in $\Lambda_\omega$ or $\F[C_n]_\gamma$:
$$ \underline{CF}^\circ(Y;\Lambda_\omega):=\underline{CF}^\circ(Y)\otimes_{\F[H^1]} \Lambda_\omega, \quad \underline{CF}^\circ(Y;\F[C_n]_\gamma):=\underline{CF}^\circ(Y)\otimes_{\F[H^1]} \F[C_n]_\gamma$$
Given $\omega\in \Omega^2(Y;\R)$, or a curve $\gamma\subset Y$, we can also amalgamate the  actions above to consider $\Lambda[C_n]$ as an $\F[H^1(Y)]$--module, where the action takes place on  $\Lambda$ and $C_n$ independently, as defined above. It will often be more convenient to use concrete models for these chain complexes, which will be described in the course of the proof of the following theorem.

\begin{theorem} \label{triangle} Let $M$ be an oriented $3$-manifold with oriented boundary $\partial M=T^2$, and let $\slope{0},\slope{1},\slope{2}\subset T^2$ be a triple of simple closed curves, whose algebraic intersection numbers satisfy (for some choice of orientations) $$\#\{\slope{0}\cap\slope{1}\}=-n, \quad \#\{\slope{1}\cap\slope{2}\}=\#\{\slope{2}\cap\slope{0}\}=-1,$$
where $n>0$. Then for any $2$-form $\omega$  which vanishes on $\partial M$, and for $\RR=\F$ or  $\Lambda_\omega$, there is a long exact sequence 
\[\begin{tikzpicture}[>=latex] 
\matrix (m) [matrix of math nodes, row sep=2em,column sep=2em]
{ HF^+(M_0; \RR)& & HF^+(M_1; \RR),\\
&  \underline{HF}^+(M_2; \RR[C_n]) & \\};

\path[->,font=\scriptsize]
(m-1-1) edge[->] node[above]  {} (m-1-3)
(m-1-3) edge[->] node[below] {$G$} (m-2-2)
(m-2-2) edge[->] node[below] {$F$} (m-1-1); 
\end{tikzpicture}\]
where $M_{j}$ is the $3$-manifold obtained by Dehn filling $M$ with  slope $\slope{j}$. The module structure on $\RR[C_n]$ is defined by the curve $\slope{2}^*$ obtained as the core of the filling torus and, for $n=1$, is isomorphic to $\RR$.  In each case $\Lambda_\omega$ should be interpreted as the module associated with the extension of $\omega$ by zero to a $2$--form over the filling solid torus.

The maps $G$ and $F$ are related to the maps on twisted Floer homology groups  induced by the canonical $2$-handle cobordisms between the filled $3$-manifolds, and are defined by chain maps in Equations \eqref{f1def} and \eqref{f2def} below.
\end{theorem}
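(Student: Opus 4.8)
The plan is to prove Theorem~\ref{triangle} by realizing the three terms as (twisted) Floer complexes of a single Heegaard multi--diagram, defining the maps by counts of holomorphic triangles through top generators, and invoking the standard algebraic ``exact triangle detection lemma'' (c.f.\ \cite{HolDiskTwo}, \cite{Branched}). The only genuinely new feature relative to the classical surgery triangle is that one pair of filling slopes meets $n$ times rather than once, forcing the cyclic--group coefficients $\RR[C_n]$; consequently the bulk of the work is bookkeeping of coefficient modules, and I expect that to be where the difficulty concentrates.

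First I would fix a pointed Heegaard diagram for $M$ in which the punctured torus $\partial M$ appears as a subsurface carrying arcs that close up to the slope curves. The three slopes then become curves $\GGG{0},\GGG{1},\GGG{2}$ on the closed surface $\Sigma$ of some genus $g$, and, together with $g-1$ ``interior'' curves and the $\AAA$--curves, they assemble into a multi--diagram $(\Sigma,\AAA,\GGG{0},\GGG{1},\GGG{2},w)$ presenting all three fillings $M_0,M_1,M_2$ and the three canonical $2$--handle cobordisms between them; after winding the curves one achieves weak admissibility, and in the Novikov case one only needs admissibility for periodic domains on which $\omega$ integrates to zero --- those supported in $M$ --- which is exactly why it suffices that $\omega|_{\partial M}=0$ and why the argument survives an $\omega$--positive filling periodic domain. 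By the intersection hypotheses the diagrams $(\Sigma,\GGG{1},\GGG{2},w)$ and $(\Sigma,\GGG{2},\GGG{0},w)$ describe $\#^{g-1}(S^1\times S^2)$, each with a unique top--degree generator $\Theta_{12},\Theta_{20}$, while $(\Sigma,\GGG{0},\GGG{1},w)$ describes $\#^{g-1}(S^1\times S^2)$ connect--summed with a lens space of order $n$ --- a complex with no canonical top element over $\F$, but one which becomes free of rank $2^{g-1}$ with a distinguished top generator $\Theta_{01}$ after tensoring with $\RR[C_n]$ over the action of the curve dual to the core $\slope{2}^{*}$ of the $M_2$--filling. Since $\slope{2}^{*}$ is null--homologous in $M_0$ and $M_1$, the $\RR[C_n]$--twisting on those two terms is merely extension of scalars and the sequence descends to the form stated. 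The three maps are then given on chains by counting index--zero holomorphic triangles through the $\Theta_{ij}$, weighted by the Novikov variable $t$ to the $\omega$--area of the triangle and by the generator $\zeta$ of $\RR[C_n]$ to the mod--$n$ intersection of the triangle's boundary with $\slope{2}^{*}$. When $n=1$ the lens space is $S^3$, $C_1$ is trivial, $\RR[C_1]=\RR$, and one recovers the ordinary triangle.

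Next I would verify the two hypotheses of the detection lemma. That consecutive compositions are chain--homotopic to zero is the usual holomorphic--quadrilateral argument: counting index--$(-1)$ rectangles with two corners at top generators supplies the null--homotopy, once one checks that the two additive assignments are additive under splicing of polygons --- here $\omega|_{\partial M}=0$ lets the area assignment descend to the closed fillings, and one must confirm compatibility of the $C_n$--twisting on $M_2$ with the coefficients on $M_0,M_1$, i.e.\ that the change of coefficients $\F[T,T^{-1}]\to\RR[C_n]$ induced by $H^1(M_2)\twoheadrightarrow C_n$ is respected. That the ``diagonal'' maps $D_i\colon C_i\to C_{i+2}$ --- a triangle count post--composed with a null--homotopy --- are quasi--isomorphisms is the technical heart. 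I would identify $D_i$, up to chain homotopy, with a holomorphic--polygon map for a multi--diagram enlarged by small Hamiltonian translates of the $\GGG{j}$, and then reduce the count to a local model supported in a collar $T^2\times[0,1]$ of $\partial M$ with its three solid--torus fillings, where the polygon counts are explicit, combined with a relative K\"unneth pairing with the complement. The delicate point, which I expect to be the main obstacle, is verifying within this model that the $\zeta$--weighting precisely compensates for the $n$--fold intersection of $\slope{0}$ and $\slope{1}$, so that $D_i$ is an isomorphism rather than merely injective or surjective; and all of this must be done uniformly in $\RR\in\{\F,\Lambda_\omega\}$, with the Novikov case relying on the relaxed admissibility noted above.

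Finally, having established exactness of the long sequence on $\widehat{HF}$, the $HF^+$ and $HF^-$ statements follow by rerunning the same argument with the respective flavor, the maps being $U$--equivariant throughout. I would close by recording the relation of $F$ and $G$ to the twisted $2$--handle cobordism maps of Section~\ref{subsec:proof}: because the multi--diagram was chosen subordinate to the cobordisms, the triangle--counting formulas for $F$ and $G$ coincide with the twisted cobordism--map formulas of \eqref{cobordmap} composed with the change--of--coefficients homomorphism $\F[T,T^{-1}]\to\RR[C_n]$. This is precisely the commutativity invoked in the proof of Proposition~\ref{tightprop}, and it fixes the geometric content of the maps asserted in the last sentence of the theorem.
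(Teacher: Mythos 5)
Your overall strategy — a Heegaard multi-diagram presenting all three fillings, polygon counts with area and $C_n$ weights, and the exact triangle detection lemma — is the same as the paper's. There is, however, a genuine error at the foundational step of defining the distinguished generator $\Theta_{0,1}$. You assert that the Floer complex for the pair $(\Tg{0},\Tg{1})$, presenting $L(n,1)\#^{g-1}(S^1\times S^2)$, ``becomes free of rank $2^{g-1}$ with a distinguished top generator $\Theta_{01}$ after tensoring with $\RR[C_n]$ over the action of the curve dual to the core $\slope{2}^*$.'' This cannot be correct. The twisting basepoint $p$ lies on $\gamma_g^2$, which is not a curve in the diagram $(\Sigma,\GGG{0},\GGG{1})$; consequently $n_p(\partial\phi)=0$ for every Whitney disk in that diagram, the $C_n$-twisting is a trivial extension of scalars, and the twisted homology has rank $n\cdot 2^{g-1}$ over $\RR[C_n]$ — the $n$ torus intersection points lie in $n$ distinct $\spinc$ structures on $L(n,1)$ with no differentials between them, so no twisting of the differential can merge them. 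The paper avoids this by keeping the $(\Tg{0},\Tg{1})$ complex over $\RR[[U]]$ (no $C_n$ twist), explicitly choosing one of the $n$ $\spinc$ structures on $L(n,1)$ to single out a summand, and taking $\Theta_{0,1}$ to be the top generator of that summand. The paper also flags that the resulting exact triangle depends on this choice; your construction, if it worked, would produce a canonical triangle — a sign that something has gone wrong.

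Beyond this, your sketch of the two detection-lemma hypotheses is in the right spirit but smooths over exactly the parts the twisting makes nontrivial. For the null-homotopy you must check that $f_{i,i+1,i+2}(\Theta_{i,i+1}\otimes\Theta_{i+1,i+2})=0$; the relevant triangles in the torus summand come in pairs $\psi_k^\pm$ whose cancellation requires verifying that both members have the same $n_w$, the same $n_p(\partial\psi)$ modulo $n$, and the same $\omega$-area (the last following from triviality of the class of $\omega$ on the relevant four-manifold). This mod-$n$ matching is precisely what the $C_n$-weighting changes relative to the classical $n=1$ triangle. For the quasi-isomorphism, the case $i\equiv 2\pmod 3$ is genuinely different: there the triangle map goes between two complexes each carrying a $C_n$ twist, forcing a bimodule structure on $\CFmc(\Tg{2},\widetilde{\Tg{2}})$ with generator $\Theta_{2,\tilde 2}=\sum_{i=1}^n\zeta^{-i}\theta_+\tilde\zeta^i$, and the identification of $\widehat{q}_2$ with the nearest-point map up to lower-order terms involves an explicit count of $n$ quadrilaterals distinguished by their $(n_p,n_{\tilde p})$ multiplicities. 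Reducing ``to a local model in the collar $T^2\times[0,1]$'' is the right slogan, but the mod-$n$ bookkeeping in that local model is the heart of the argument rather than a detail, and your proposal does not engage with it.
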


\begin{remark} The assumption on the intersection numbers is equivalent to the condition that the slopes satisfy $$[\slope{0}+\slope{1}+n\slope{2}]=0\in H_1(\partial M).$$
\end{remark}

\begin{remark} The theorem also holds with $\Z$ replacing $\F$ throughout, and for the other versions of Floer homology provided that we complete the coefficients with respect to  $U$ in the case of minus and infinity.
\end{remark}

Before proving the theorem, we discuss a collection of closely related results  in the literature.  To begin, the theorem with $\RR=\Z$ was first proved in \cite[Theorem 9.14]{HolDiskTwo}, in the (not-so) special case that $M=Y\setminus \mathrm{nbd}(K)$ is the complement of a null-homologous knot in a homology sphere,  $\slope{1}$ its meridian, and $\slope{2}$ its Seifert longitude. This yields an exact triangle for the Floer homologies of the triple $Y_{1/n}(K),Y, Y_{0}(K)$  with twisting on the zero surgery term.  In the same paper, the case where $\slope{2}$ is the meridian of a null-homologous knot and $\slope{0}$ its Seifert longitude was also addressed, yielding an exact triangle for $Y_0(K),Y_n(K),Y $, with trivially twisted coefficients for the Floer homology of $Y$ (groups which are isomorphic to the direct sum of $n$ copies of the untwisted Floer homology).  In both cases, the proof relied on an adaptation of Floer's argument for an exact triangle in instanton homology \cite{Floer}. In particular, the long exact sequences came from short exact sequences on the chain level. This left  the geometric meaning of the connecting homomorphisms  unclear.  This was remedied for the fractional surgery exact triangle in \cite[Section 3.1]{AbsGrad}, where the  maps  starting and terminating on the twisted term were  interpreted in terms of holomorphic triangle counts in a cover of the symmetric product of a Heegaard diagram (the third map, too, was identified with triangle counts, but this fact was already explicit in \cite{HolDiskTwo}).  %The aforementioned cover was naturally associated to  the cohomology class hom-dual to $[\gamma]$, and the twisted homology groups were  identified as the Floer homology groups of lifts of the Heegaard tori to the cover.  The basic mechanism for proving the exact triangle, however, was again Floer's original argument.

In \cite[Theorem 4.5]{Branched}, the exact  sequence with $\RR=\Z/2\Z$ and $n=1$ was reproved in such a way to put all of the maps  on equal footing.  In particular, each map was defined using the same  holomorphic triangle counts involved in the definition of the theory's $2$--handle cobordism maps;  indeed, in the case $n=1$,  consecutive pairs of $3$--manifolds  in the triangle are manifestly cobordant through a single $2$--handle attachment. Each map in the exact sequence is a sum of the  maps on Floer homology induced by this cobordism.  The key for this approach was a break from Floer's proof of the exact triangle, and the implementation of an ``exact  triangle detection lemma"  \cite[Lemma 4.2]{Branched} (see proof below a statement).   In \cite[Theorem 3.1]{AiPeters}, this approach was extended to the case $n=1$ and $\RR=\Lambda_\omega$.  

In \cite[Theorem 3.1]{IntegerSurgeries}, the case with $\RR=\Z$, $M=Y\setminus \mathrm{nbd}(K)$ a null-homologous knot complement, $\slope{2}$ its meridian, and $\slope{0}$ an $m$--framed longitude was treated.  This yields an exact triangle between the Floer homology of $Y_{m}(K)$, $Y_{m+n}(K)$, and $n$ copies of that of $Y$.  There, the treatment was again via the exact triangle detection lemma, but the discussion left ambiguous the precise definition of certain maps relevant in the application of the lemma.    A theorem equivalent to Theorem \ref{triangle} in the case $\RR=\Z$ was stated  for the completed minus version of Floer homology \cite[Proposition 9.5] {ManolescuOzsvath}.    We turn to the proof.

\begin{proof} Since many elements of the proof appear in the literature,  we will outsource various details to specific references, and focus on issues that are either absent or  ambiguous elsewhere.  An easily stated version of the exact triangle detection lemma says that if $A_i$ are chain complexes, $f_i$ chain maps, and $h_i$ chain homotopies arranged as:
\[\begin{tikzpicture}[>=latex] 
\matrix (m) [matrix of math nodes, row sep=2em,column sep=2em]
{ A_0 & & A_1,\\
&  A_2 & \\};
\path[->,font=\scriptsize]
(m-1-1) edge[->] node[below]  {$f_0$} (m-1-3)
(m-1-3) edge[->] node[left] {$f_1$} (m-2-2)
(m-2-2) edge[->] node[right] {$f_2$} (m-1-1)
(m-1-1) edge[->,bend right=55] node[below] {$h_0$} (m-2-2)
(m-1-3) edge[->,bend right=35]  node[above] {$h_1$} (m-1-1)
(m-2-2) edge[->,bend right=55]  node[below] {$h_2$} (m-1-3);
\end{tikzpicture}\]
 which satisfy, for each $i\in \{0,1,2\}$ (regarded cyclically):
\begin{enumerate}
\item  (Null-homotopy) $f_{i+1}\circ f_{i}=\partial_{i+2}\circ h_{i} + h_{i}\circ \partial_{i}$
\item (Quasi-isomorphism) $f_{i+2}\circ h_{i} +  h_{i+1}\circ f_{i}$  induces an isomorphism on homology, 
\end{enumerate}
then the maps induced by  $f_i$  form a long exact sequence on homology.    Chain complexes, maps, and homotopies satisfying these assumptions, and which induce the desired exact triangle are produced from a specific Heegaard quadruple diagram:
$$(\Sigma;\{ {\boldsymbol \alpha}, \GGG{0},\GGG{1},\GGG{2}\}; \{w, p\}),$$
where

\begin{enumerate}
\item $\Sigma$ is a closed oriented surface of genus $g$,
%\item  ${\boldsymbol    \alpha}=\{\alpha_1,\ldots,\alpha_g\}$ and $\GGG{j}= \{\gamma_1^j,\ldots,\alpha_g^j\}$ for $j=0,\ldots,3$  are each, respectively, a $g$--tuple of homologically  independent, pairwise disjoint, simple closed curves in $\Sigma$, and the five $g$-tuples are in general position (i.e. only transverse double points of intersection)

\item $\GGG{i}=\{\gamma_1^i,\ldots,\gamma_g^i\}$, $i=0,1, 2,$ are  $g$-tuples of simple closed curves in $\Sigma$, arranged  so that the first $g-1$ curves are all small Hamiltonian translates of each other, and so that $\gamma_g^i$ live in a torus connect summand of $\Sigma$ and intersect  minimally in the same way as the filling slopes. 
\item ${\boldsymbol    \alpha}=\{\alpha_1,\ldots,\alpha_g\}$ is a $g$--tuple of homologically  independent, pairwise disjoint, simple closed curves in $\Sigma$, transverse to the union of $\GGG{j}$.

\item For each $i=0,1,2,$ the Heegaard diagram $(\Sigma,{\boldsymbol    \alpha}, \GGG{i})$ specifies $M_i$.

\item $w$ is a basepoint in the complement of all curves, and  $p$ is a basepoint in $\gamma_g^2$.

\item The diagram is admissible, in the sense that any multi-periodic domain satisfying  $n_w(\mathcal{P})=0$ either has at least one negative coefficient or satisfies $\omega([\mathcal{P}])> 0$, where  $\omega$ is the perturbation $2$-form.
%\item  For each $i=1,\ldots, g-1$, the curves $\gamma^{j}_i$  with $j=0,\ldots ,3$ are small isotopic translates of each other (as in Figure \ref{ham}), and $(\Sigma, \boldsymbol\alpha,\GGG{j}\setminus \gamma^{j}_g)$ is a Heegaard diagram for $M$.  
%\item The final curves $\gamma_g^0,\gamma_g^1,\gamma_g^2,\gamma_g^3$ are arranged as in Figure \ref{slopes}, with $\gamma_g^j$  isotopic to the slope $\slope{j}\subset \partial M$  so that the Heegaard diagram  $(\Sigma, \boldsymbol \alpha, \GGG{j})$ specifies $M_j$, with $j$ regarded cyclically. 
%\item  $p\in \gamma_g^2 \simeq\slope{2}$ is a basepoint (used for $C_n$ twisting).
\end{enumerate}

In terms of this diagram we define chain groups 
$$ A_i = \left\{\begin{array}{ll} 
		 \underset{\x \in \mathbb{T}_{\boldsymbol\alpha^{\phantom{i}}}\!\!\!\cap \Tg{i}}\bigoplus \RR^+\cdot\x & i=0,1,\\
		 \underset{\x \in \mathbb{T}_{\boldsymbol\alpha^{\phantom{i}}}\!\!\!\cap \Tg{i}}\bigoplus \RR^+[C_n]\cdot\x & i=2.\\
\end{array}\right.
$$
Here $\RR^+$ is the $\RR[U]$ module $\RR[U,U^{-1}]/U\cdot\RR[U]$.  The boundary operators $\partial_i:A_i\rightarrow A_i$ are given by:
%$$ n^j_{\x,\y,i,k}:= \langle\partial_j([\x,i]),[\y,k]\rangle, $$
%then for $\RR=\Lambda_\omega$,  $n^j_{\x,\y,i,k}$ is given by 
$$\partial_i(U^{-j}\cdot \x)= \sum_{\stackrel{\scriptstyle\phi\in \pi_2(\x,\y)}{{\mu(\phi)=1}}}\#{\widehat{\mathcal{M}}(\phi)}\cdot t^{\omega(\phi)}\cdot U^{n_w(\phi)-j}\cdot\y, \quad \text{for } i=0,1,$$
and 
$$\partial_2(\zeta^k \cdot U^{-j}\cdot \x)= \sum_{\stackrel{\scriptstyle\phi\in \pi_2(\x,\y)}{{\mu(\phi)=1}}}\#\widehat{\mathcal{M}}(\phi)\cdot t^{\omega(\phi)}\cdot \zeta^{k+n_p(\partial \phi)}\cdot U^{n_w(\phi)-j}\cdot\y.$$
\noindent In the case that $\RR=\F$, the coefficients can be obtained from the above formulae by setting $t=1$.   Here, the  exponent of $t$ is given by the evaluation of $\omega$ on the two-chain in $Y$ arising from the domain of the Whitney disk (viewed as a two-chain on $\Sigma$), together with two-chains that cone off its boundary with gradient flowlines to the index one and two critical points of a Morse function on $Y$ specifying the Heegaard diagram.   In the $\RR[C_n]$ twisted case, we further multiply by the $n$-th root of unity, raised to the algebraic number of times the boundary of the domain crosses the $p$ basepoint.  Tracing through the definitions, one see that these complexes compute the  Heegaard Floer  groups in the theorem:
$$ H_*(A_i,\partial_i) \cong HF^+(M_i,\RR), \quad i=0,1,$$
$$ H_*(A_2,\partial_2) \cong HF^+(M_2,\RR[C_n]).$$
(For the $\RR[C_n]$ twisted complexes, the key point is that  the multiplicity $n_p(\partial \phi)$  equals  the intersection number of $\partial\mathcal{D}(\phi)$ with a curve that intersects $\gamma_g^2$ exactly once and no other curves; such a curve is isotopic to the core of the filling solid torus.) 

The hypotheses required by the triangle detection lemma will follow from the $A_\infty$ structure present in the Fukaya category of the symmetric product of $\Sigma$, together with the standard nature of the tori $\Tg{j}$ coming from the collections $\GGG{j}$, $j=0,1,2$.  Most of the gross features of the argument appear in the aforementioned references (see especially \cite[Proof of Theorem 3.1]{IntegerSurgeries} and \cite{OzsvathLectures}).  The new technical challenges reside primarily in understanding exactly how the twisting should be incorporated in the definition of the chain maps $f_i$ and  homotopies $h_i$, and how these definitions affect  algebraic and geometric  aspects of the argument.   Since these details are particularly relevant to the proof of our main theorem, we will try to provide a thorough treatment.

To begin, we must consider the (twisted) completed minus Floer complexes $\CFmc(\Tg{i},\Tg{i+1})$ for $i=0,1,2$.  This notation seems to be dominant in the literature, but we should note that it differs from  \cite[Section 2.5]{OzsvathLectures} where the complexes are denoted $CF^{--}$.  In each case, the  complex is freely generated  by $\x \in \Tg{i}\cap \Tg{i+1}$. For $i=0$ the ground ring is $\RR[[U]]$ and for $i=1,2$ we use   $\RR[[U]][C_n]$.  The boundary operators are defined as above, with the cases $i=1,2$ accounting for the multiplicity of domains of Whitney disks at $p\in \gamma_g^2$. The reason to consider power series in $U$ is that there may be infinitely many homotopy classes of Whitney polygons defined by the Heegaard diagram which admit holomorphic  representatives.  The admissibility conditions placed on our  diagram  ensure, however, that there are only finitely many such homotopy classes with fixed $n_w(\psi)$.  It follows that the polygon counts can be used to define maps between the completed minus (or infinity) groups.   

Observe that the $3$--manifold specified by $(\Sigma,\Tg{i},\Tg{i+1})$ is homeomorphic to $\#^{g-1}S^1\times S^2$ when $i=1,2$, while $(\Sigma,\Tg{0},\Tg{1})$ specifies the connected sum $L(n,1)\#^{g-1}S^1\times S^2$.  Their Floer homologies are given as follows:
$$ \HFmc(\Tg{0},\Tg{1})\cong \RR^n\otimes \Lambda^*(\RR^{g-1})\otimes \RR[[U]]$$
$$\HFmct(\Tg{i},\Tg{i+1})\cong \Lambda^*(\RR^{g-1})\otimes \RR[[U]][C_n], \ \ \mathrm{for}\ i=1,2.$$
One can compute this directly, or apply the K{\"u}nneth theorem for the (completed) Floer homology of a connected sum of $3$--manifolds.  For $i=1,2,$ the highest graded summand of the Floer group is rank one over $\RR[C_n]$, and we denote a generator by $\Theta_{i,i+1}$. For the $i=0$ case, the  $n$ summands  correspond to the  $n$ different $\spinc$--structures on $L(n,1)$.  Picking a particular $\spinc$--structure we  obtain a  top-dimensional generator for its summand, which we denote $\Theta_{0,1}$.  Note that this involves a choice of $\spinc$--structure on $L(n,1)$;  for each such choice we would obtain a (presumably different, in general) exact triangle.  
 
We can now define the chain maps and homotopies which serve as input for the exact triangle detection lemma.   The chain maps are given as follows:
\begin{equation}\label{f0def}  f_0(U^{-j}\x):=  \sum_{\stackrel{\scriptstyle\psi\in \pi_2(\x,\Theta_{0,1},\y)}{{\mu(\psi)=0}}}\#{{\mathcal{M}}(\psi)}\cdot t^{\omega(\psi)}\cdot U^{n_w(\psi)-j}\cdot\y
\end{equation}

\begin{equation}\label{f1def} f_1(U^{-j}\x):=  \sum_{\stackrel{\scriptstyle\psi\in \pi_2(\x,\Theta_{1,2},\y)}{{\mu(\psi)=0}}}\#{{\mathcal{M}}(\psi)}\cdot t^{\omega(\psi)}\cdot \zeta^{n_p(\partial \psi)}\cdot U^{n_w(\psi)-j}\cdot\y 
  \end{equation}
\begin{equation}\label{f2def} f_2(U^{-j}\zeta^k\x):= \sum_{\stackrel{\scriptstyle\psi\in \pi_2(\x,\Theta_{2,0},\y)}{\stackrel{{\mu(\psi)=0}}{{{n_p(\partial\psi)=-k \ \text{mod}\  n}}}}}\#{\mathcal{M}}(\psi)\cdot t^{\omega(\psi)}\cdot U^{n_w(\psi)-j}\cdot\y
  \end{equation}

Note that the map $f_i$ is defined by counting holomorphic triangles with boundary mapping to $\Ta,\Tg{i},\Tg{i+1}$, and with the vertex that maps into $\Tg{i}\cap\Tg{i+1}$ sent to our distinguished generator $\Theta_{i,i+1}$.  In each case the Novikov ring enters as with the definition of the boundary operators: we simply measure the $\omega$ area of the coned-off domains of the  Whitney triangles.  The two chains arising from coning Whitney triangles are contained within the four-manifold $X_{\alpha,\gamma^{i},\gamma^{i+1}}$ specified by the Heegaard triple diagram via the construction of \cite[Section 8]{HolDisk}, and $\omega$ canonically extends to this four-manifold by our assumption that $\omega|_{\partial M}=0$.       The only difference between the maps, then, is how they incorporate the $C_n$ twisting: $f_0$ makes no use of it; $f_1$ uses it similarly to the boundary operator on $\underline{CF}^+(M_2)$, via the signed crossing number of the boundary of a triangle at the twisting basepoint $p$; $f_2$ incorporates the twisting by requiring triangles counted in the expansion of $f_2(\zeta^k\x)$ to have boundary which crosses $p$ {\em negative} $k$ times (modulo $n$).  Verification that these define chain maps is, as usual, a consequence of Gromov compactness together with a homotopy conservation principle; namely, that intersection numbers (in the case of the $U$ action and $C_n$ twisting) and $\omega$ areas (in the case of the Novikov twisting) are homotopy invariants of a class $\psi\in \pi_2(a,b,c)$ which are additive under decomposition of such a class into the juxtaposition of a triangle with a disk.

 Similarly, we define  homotopy operators using pseudo-holomorphic quadrilateral counts:
 \begin{equation}\label{h0def}  h_0(U^{-j}\x):=  \sum_{\stackrel{\scriptstyle\psi\in \pi_2(\x,\Theta_{0,1},\Theta_{1,2},\y)}{{\mu(\psi)=-1}}}\#{{\mathcal{M}}(\psi)}\cdot t^{\omega(\psi)}\cdot \zeta^{n_p(\partial \psi)}\cdot U^{n_w(\psi)-j}\cdot\y
\end{equation}

\begin{equation} \label{h1def}h_1(U^{-j}\x):=  \sum_{\stackrel{\scriptstyle\psi\in \pi_2(\x,\Theta_{1,2},\Theta_{2,0}\y)}{\stackrel{{\mu(\psi)=-1}}{{{n_p(\partial\psi)=0 \ \mathrm{mod} \ n}}}}}\#{{\mathcal{M}}(\psi)}\cdot t^{\omega(\psi)}\cdot  U^{n_w(\psi)-j}\cdot\y 
  \end{equation}
\begin{equation}\label{h2def} h_2(U^{-j}\zeta^k\x):= \sum_{\stackrel{\scriptstyle\psi\in \pi_2(\x,\Theta_{2,0},\Theta_{0,1}\y)}{\stackrel{{\mu(\psi)=-1}}{{{n_p(\partial\psi)=-k \ \text{mod} \ n}}}}}\#{{\mathcal{M}}(\psi)}\cdot t^{\omega(\psi)}\cdot U^{n_w(\psi)-j}\cdot\y
  \end{equation}
 
 If we consider one dimensional families of pseudo-holomorphic quadrilateral (arising from $\mu=0$ homotopy classes) then Gromov compactness, together with  additivity of  $\omega(\psi),n_w(\psi),n_p(\partial\psi)$  under juxtaposition, implies that  $h_i$ provides a homotopy between  $f_{i+1}\circ f_i$ and the operator:
 $$ f_{\alpha,i,i+2}(-\otimes f_{i,i+1,i+2}(\Theta_{i,i+1}\otimes\Theta_{i+1,i+2})),$$
 where $$f_{\alpha,i,i+2}: CF^+(M_i;\RR)\otimes \CFmc(\Tg{i},\Tg{i+2})\rightarrow CF^+(M_{i+2};\RR)$$
$$ f_{i,i+1,i+2}: \CFmc(\Tg{i},\Tg{i+1})\otimes \CFmc(\Tg{i+1},\Tg{i+2})\rightarrow \CFmc(\Tg{i},\Tg{i+2})$$
are chain maps defined by counting holomorphic triangles with appropriate boundary conditions (for these latter maps, we have suppressed notation  indicating which complexes are $C_n$ twisted, but remind the reader that the complexes for $M_j$ are twisted only when $j=2$, and the complexes for pairs $\Tg{i},\Tg{j}$ are twisted unless $\{i,j\}=\{0,1\}$).  For all of the maps, homotopies, etc. involved, the key idea to keep in mind is that if the map emanates from a $C_n$ twisted complex, then the holomorphic polygons counted must cross the twisting point $p$ a number of times equal to negative the exponent of the $\zeta$ power appearing in front of the intersection point.    Another notable feature is the requirement by $h_1$ that the $\Tg{2}$ boundary of the rectangles should cross $p$ zero times, modulo $n$.  This is actually a convention which is tied to our choice of $\spinc$-structure on $L(n,1)$ used to determine $\Theta_{0,1}$.  Choosing a different $\spinc$-structure would force us to require $n_p(\partial \psi)=m$ mod $n$ for some other value of $m$.

To verify that  $h_i$, so defined, is a null-homotopy for $f_{i+1}\circ f_i$, it suffices to show that $f_{i,i+1,i+2}(\Theta_{i,i+1}\otimes\Theta_{i+1,i+2})=0$.  This is essentially a local calculation in the torus summand of the Heegaard surface where the filling slopes  lie, together with a neck stretching argument and similar local considerations for the  torus summands where the other $\gamma$ curves lie.  See \cite[Proposition 2.10]{OzsvathLectures} for details on the argument, as applied to the hat theory, and \cite[Section 2.5]{OzsvathLectures} for its extension to plus.  For us, the only difference will be in the torus connect summand of the Heegaard surface where the filling slopes lie and the added bells and whistles that our twisting(s) incorporate.  The universal cover of this torus, together with the  lifts of the filling slopes,  is shown in Figure \ref{triangles} in the case where $n=3$.

\begin{figure}
\begin{center}
\def\svgwidth{4.6in}
 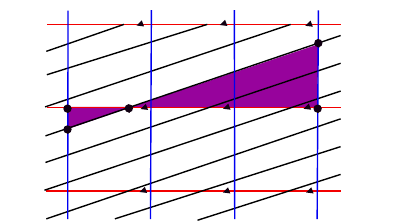
 \caption{\label{triangles} The universal cover of the torus summand of the Heegaard diagram where the filling slopes lie in the case $n=3$.  The black lines of slope $1/3$ represent lifts of  $\gamma_g^0$, and the blue vertical lines are lifts of $\gamma_g^1$.  The red horizontal lines are lifts of $\gamma_g^2$, and contain lifts of the basepoint $p$ which defines the $C_3$ twisting (we represent  lifts of $p$ by small black triangles).  Shown are the    triangles $\psi_1^\pm$ with  vertices on $\Theta_{i,i+1}$ and $\Theta_{i+1,i+2}$. They satisfy $n_w(\psi_1^\pm)=0,\ n_p(\partial \psi_1^\pm)= 0 \ \text{mod}\ 3$.}
\end{center}
\end{figure}

The key fact about this region for this part of the  argument  is that  triangles with two vertices on the $g$-th component of $\Theta_{i,i+1}$ and $\Theta_{i+1,i+2}$ and fixed values of $n_w(\psi),\omega(\psi)$ and $n_p(\partial\psi)\ \text{mod} \ n$ come in canceling pairs.  More precisely, for each $k>0$, there are exactly two triangles, $\psi_k^\pm$ with two vertices on the $g$-th component of $\Theta_{i,i+1}$ and $\Theta_{i+1,i+2}$, and these triangles satisfy $n_w(\psi_k^\pm)= n\cdot \frac{k(k-1)}{2}$ and $n_p(\partial \psi_k^\pm)=0 \ \text{mod} \ n$.
    That the triangles cancel comes from the facts that our base rings have characteristic two and that $\omega(\psi_k^+)=\omega(\psi_k^- )$ for all $k$. To see this latter fact, it suffices to observe that the cohomology class determined by $\omega$ on the four-manifold $X_{\gamma^{i},\gamma^{i+2},\gamma^{i+2}}$ is trivial, c.f. \cite[Theorem 3.1, last paragraph of proof]{AiPeters}.

We now turn to the quasi-isomorphism condition in the triangle detection lemma.      For this we consider an augmented Heegaard diagram which, in addition to the four sets of attaching curves previously mentioned, contains an additional $g$-tuple of curves $\GGGt{i}$ each of which arises via small Hamiltonian perturbation from a corresponding curve in $\GGG{i}$ (in particular, the 3-manifold specified by $(\Sigma,{\bf \alpha},\GGGt{i})$ is  homeomorphic to $M_i$).  We further require that each curve in $\GGGt{i}$ intersects the corresponding curve in $\GGG{i}$ in exactly two points. There are corresponding complexes, denoted $\tilde{A}_{i}$, and we  consider maps $g_i: A_i\rightarrow \tilde{A}_{i}$, defined by counting pseudo-holomorphic pentagons:  

\begin{equation} g_0(U^{-j}\x):=  \sum_{\stackrel{\scriptstyle\psi\in \pi_2(\x,\Theta_{0,1},\Theta_{1,2},\widetilde{\Theta}_{2,{0}},\y)}{\stackrel{{\mu(\psi)=-2}}{{{n_p(\partial\psi)=0 \ \mathrm{mod} \ n}}}}}\#{{\mathcal{M}}(\psi)}\cdot t^{\omega(\psi)}\cdot U^{n_w(\psi)-j}\cdot\y
\end{equation}

\begin{equation} g_1(U^{-j}\x):=  \sum_{\stackrel{\scriptstyle\psi\in \pi_2(\x,\Theta_{1,2},\Theta_{2,0},\widetilde{\Theta}_{0,{1}},\y)}{\stackrel{{\mu(\psi)=-2}}{{{n_p(\partial\psi)=0 \ \mathrm{mod} \ n}}}}}\#{{\mathcal{M}}(\psi)}\cdot t^{\omega(\psi)}\cdot  U^{n_w(\psi)-j}\cdot\y 
  \end{equation}
\begin{equation} g_2(U^{-j}\zeta^k\x):= \sum_{\stackrel{\scriptstyle\psi\in \pi_2(\x,\Theta_{2,0},\Theta_{0,1},\widetilde{\Theta}_{1,{2}},\y)}{\stackrel{{\mu(\psi)=-2}}{{{n_p(\partial\psi)=-k\ \text{mod}\ n}}}}}\#{{\mathcal{M}}(\psi)}\cdot t^{\omega(\psi)}\cdot \zeta^{n_{\tilde{p}}(\partial\psi)}\cdot U^{n_w(\psi)-j}\cdot\y,
  \end{equation}
 where $\widetilde{\Theta}_{i,i+1}$ is a top-dimensional generator for the complex associated to the Lagrangians coming from   $\GGG{i}$ and $\GGGt{i+1}$.  Note the appearance of $\tilde{p}$ in the last equation: this is a basepoint on $\tilde{\gamma}_g^2$ which is the image of $p$ under the Hamiltonian isotopy defining  $\gamma_g^2$.  
 
Gromov compactness for one dimensional families of pseudo-holomorphic pentagons, applied in this context, implies that such a family will have ten types of ends. Five arise from the  non-compactness of the domain coming from the vertices (boundary punctures) of the pentagon.   Using the fact that the $\Theta$ intersection points are cycles rules out three of the these ends, and the remaining two give rise to terms of the form $g_i\circ\partial_i+ \tilde{\partial}_{{i}}\circ g_i$.   The  other five ends correspond to  ends of the moduli space of conformal structures on a pentagon, over which the moduli spaces $\mathcal{M}(\psi)$ fiber. Each of these comes from a  conformal degeneration of a pentagon into a rectangle and triangle joined at a vertex.  Of these, two give rise to the  terms in the sum of compositions $\tilde{f}_{i+2}\circ h_{i} +  \tilde{h}_{i+1}\circ f_{i}$, where $\tilde{f}_{i+2}$ and $\tilde{h}_{i+1}$ are defined exactly as in Equations \eqref{f0def}--\eqref{h2def}, but with the $\GGGt{i}$ curves used in place of $\GGG{i}$ in the range of the map.  Two  of the remaining ends  involve triangles which contribute to the maps   $f_{i,i+1,i+2}(\Theta_{i,i+1}\otimes \Theta_{i+1,i+2})$ and ${f}_{i+1,i+2,\tilde{i}}(\Theta_{i+1,i+2}\otimes \widetilde{\Theta}_{i+2,i})$, which were previously shown to be zero.  The remaining ends  contribute to the map
 \begin{equation}\label{quasi} q_i(-):=f_{\alpha,i,\tilde{i}}(-\otimes h_{i,i+1,i+2,\tilde{i}}(\Theta_{i,i+1}\otimes \Theta_{i+1,i+2}\otimes \widetilde{\Theta}_{i+2,{i}}))\end{equation}
where $h_{i,i+1,i+2,\tilde{i}}$ is an operator defined by counting holomorphic quadrilaterals.   
Thus the pentagon operators provide a chain homotopy between $\tilde{f}_{i+2}\circ h_{i} +  \tilde{h}_{i+1}\circ f_{i}$ and $q_i$.   We claim that $q_i$ induces an isomorphism on homology (in fact, it is an isomorphism of chain complexes, but we will not need this).  Granting this, we have essentially proved the theorem.  The one caveat is that $q$ is not a map from $A_i$ to itself, but to a (quasi-)isomorphic complex $\tilde{A}_i$.   The easiest way around this technicality is to tweak the detection lemma to address a  family of chain complexes which have three-periodic homology.  This  is the route taken by \cite{Branched} and subsequent incarnations. We follow suit, so that our $f,h,$ and $g$ maps  increase the index (by $1$,$2$, and $3$, respectively) in the family of complexes $\{A_i\}_{i\in \Z}$ which we will show have three-periodic homology via   $q_i: A_i\rightarrow A_{i+3}$, with $A_{i+3}:=\tilde{A}_i$.

  Working with this setup, it only remains  to show that $q_i$ induces an isomorphism on homology.   When $i\ne 2$ mod $3$ it will suffice to show that 
$$ \widehat{h}_{i,i+1,i+2,\tilde{i}}(\Theta_{i,i+1}\otimes \Theta_{i+1,i+2}\otimes\Theta_{i+2,\tilde{i}})= t^{\lambda}\cdot\Theta_{i,\tilde{i}},$$
for some $\lambda$ (since $t^\lambda$ is a unit in $\Lambda$), and that 
$$ \widehat{f}_{\alpha,i,\tilde{i}}(-\otimes \Theta_{i,\tilde{i}})$$ induces an isomorphism on homology, where in both cases the ``hat" refers to the induced map on the corresponding hat Floer complex (that verification of isomorphism for the hat complex implies it for the plus complex is a consequence of \cite[Exercise 1.4]{OzsvathLectures}).
Verifying the former is essentially the same argument found in \cite[Discussion surrounding Equation 15, Figures 8 and 9]{OzsvathLectures},  the only real difference 
being the local calculation in the torus region where the filling slopes lie and the implicit  Novikov twisting.  Indeed, we obtain the factor of $t^\lambda$ in front of $\Theta_{i,\tilde{i}}$, where $\lambda$ is the $\omega$ area of the coned off domain of the unique pseudo-holomorphic quadrilateral with $n_p(\partial \psi)=0$ modulo $n$ and $n_w(\psi)=0$.

  For the latter, when $i\ne 2$ mod $3$, one can easily show that  that $f_{\alpha,i,\tilde{i}}(-\otimes \Theta_{i,\tilde{i}})$ is an isomorphism by arguing that it agrees, up to higher order terms with respect to the  area filtration, with the ``closest point" map $\iota$ discussed in \cite[Proof of Lemma 2.17]{OzsvathLectures}.  When Novikov coefficients are used, one needs to be careful if a non-admissible Heegaard diagram is employed:  in that case one cannot find an area form which vanishes on all periodic domains, hence the area filtration is not well-defined.  The argument still works, however, if the Heegaard diagram is  admissible in the weaker sense that $\omega$ evaluates positively on  positive multi-periodic domain. For then one can filter the complex using a combination of  area and the natural filtration of $\Lambda_\omega$ by powers of $t$.

\begin{figure}[b]
\begin{center}
\def\svgwidth{6.6in}
 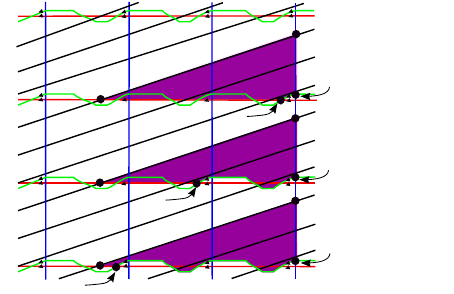
 \caption{\label{quads} The figure shows the domains of $n=3$ holomorphic quadrilaterals embedded in the  universal cover of the torus summand where the filling slopes lie.   These  account for the terms in the sum  (\ref{eq:quad}). Each can be viewed as a slight perturbation of the triangle $\psi_1^+$ from Figure \ref{triangles}, and they differ only in which lift of the $g$-th component of $\theta_+$ the boundary of the quadrilateral ``jumps" from $\tilde{\gamma}^2$ to $\gamma^2$.  This difference affects the values of $n_p(\psi)$ and $n_{\tilde{p}}(\psi)$, giving rise to the different terms in Equation (\ref{eq:quad}).  The top, middle, and bottom quadrilaterals give rise to the terms $\zeta^{2}\theta_+ \tilde{\zeta}^{1}, \ \zeta^{1}\theta_+ \tilde{\zeta}^{2},$ and $\zeta^{0}\theta_+ \tilde{\zeta}^{3}$, respectively.}
\end{center}
\end{figure}

  The cases with $i=2$ mod $3$ are somewhat different than the other two.  Here the chain maps considered in Equation \eqref{quasi} are defined by holomorphic triangle counts with twisting on both the input and output complexes:
  $$  \underline{CF}^+(M_2;\RR[C_n])\underset{{\RR[C_n][[U]]} }{\otimes}\CFmct(\Tg{2},\widetilde{\Tg{2}})\overset{f_{\alpha,2,\tilde{2}}}\longrightarrow \underline{CF}^+(M_{\tilde{2}};\RR[C_n])$$  
 where the twisting on the input is induced by the basepoint $p\subset \gamma_g^2$, and on the output by $\tilde{p}\subset \tilde{\gamma}_g^2$.  Note that the complex associated to the pair $\Tg{2},\widetilde{\Tg{2}}$ is twisted by both basepoints, and thus is freely generated  over 
 $$\RR[[U]][C_n]\otimes_{\RR[[U]]} \RR[[U]][C_n].$$  Equivalently, we can think of it as a complex of  $\RR[[U]][C_n]-\RR[[U]][C_n]$ bimodules.  The boundary operator is given by
 $$ \partial(\zeta^i\x\tilde{\zeta}^j)= \sum_{\stackrel{\scriptstyle\phi\in \pi_2(\x,\y)}{{\mu(\phi)=1}}}\#{\widehat{\mathcal{M}}(\phi)}\cdot t^{\omega(\phi)}\cdot U^{n_w(\phi)}\cdot\zeta^{i+n_p(\partial\phi)}\y\tilde{\zeta}^{j+n_{\tilde{p}}(\partial\phi)},$$
where we use $\zeta$ (resp. $\tilde{\zeta}$) to record  the twisting induced by  $p$ (resp. $\tilde{p}$).   Its homology, viewed as either a right or left module over $\RR[[U]][C_n]$ can easily be computed:
 $$\underline{\bf HF}^-(\Tg{2},\widetilde{\Tg{2}})\cong (\RR[[U]][C_n]\oplus \RR[[U]][C_n])\otimes_\RR \Lambda^*(\RR^{g-1}),$$ 
 where a bimodule generator for the top dimensional summand  is given by 
 $$ \Theta_{2,\tilde{2}}= \sum_{i=1}^{n} \zeta^{-i}\theta_+ \tilde{\zeta}^i.$$
Here, $\theta_+$ is the explicit  $g$--tuple of intersection points representing the top-graded generator of the chain complex for $\#^{g}S^1\times S^2$ coming from the Heegaard diagram $(\Sigma,\GGG{2},\GGGt{2})$.   Now the map $f_{\alpha,2,\tilde{2}}$ is defined on generators by (we suppress the role of $U$):
 
 \begin{equation} f_{\alpha,2,\tilde{2}}( \x\zeta^i\otimes \zeta^j\y\tilde{\zeta^k}):=
 \sum_{\stackrel{\scriptstyle\psi\in \pi_2(\x,\y,\rrr)}{\stackrel{{\mu(\psi)=0}}{{{n_p(\partial\psi)=-i-j \ \text{mod}\  n}}}}}\#{\mathcal{M}}(\psi)\cdot t^{\omega(\psi)}\cdot \tilde{\zeta}^{k+n_{\tilde{p}}(\partial\psi)}\cdot\rrr
  \end{equation}
We wish to show that the  map $q_2$ defined in \eqref{quasi} induces an isomorphism on homology.  To do this, we observe 
\begin{equation}\label{eq:quad} \widehat{h}_{2,0,1,\tilde{2}}(\Theta_{2,0}\otimes \Theta_{0,1}\otimes \Theta_{1,\tilde{2}})= \sum_{i=1}^{n} \zeta^{n-i}\theta_+ \tilde{\zeta}^{i}=\Theta_{2,\tilde{2}},\end{equation}
Figure \ref{quads} and its caption explain the first equality, and for the second  we use the fact $\zeta^{n-i}=\zeta^{-i}$. 

  Next we note that 
$$  \widehat{f}_{\alpha,2,\tilde{2}}( \x\otimes 1\theta^+\tilde{\zeta}^j)=\tilde{\zeta}^j\iota(\x)+ \text{lower order terms},$$
where $\iota$ is the closest point map on generators, and lower order is with respect to the area filtration.  This follows  from the existence of small triangles connecting $\x$ to $\iota(\x)$ with third vertex mapping to $\theta_+$ whose boundaries do not cross the basepoints $p,\tilde{p}$.  Now consider the restriction of $q_2$ to the hat complex.  We have
$$
\begin{array}{lll} \widehat{q}_2(\zeta^j\x)&\!\! :=& 
\widehat{f}_{\alpha,2,\tilde{2}}( \x\zeta^j\otimes \Theta_{2,\tilde{2}})\\
&&\\
&=& \sum_{i=1}^{n} \widehat{f}_{\alpha,2,\tilde{2}}( \x\zeta^j\otimes \zeta^{-i}\theta_+ \tilde{\zeta}^i)\\ 
&&\\
&= & \sum_{i=1}^{n} \widehat{f}_{\alpha,2,\tilde{2}}( \x\ \ \otimes\zeta^{j-i}\theta_+ \tilde{\zeta}^i)\\ 
&&\\
&=& \widehat{f}_{\alpha,2,\tilde{2}}( \x\ \otimes 1\theta^+\tilde{\zeta}^j) + \text{lower order terms}\\
&&\\
&=&  \tilde{\zeta}^j\iota(\x)+ \text{lower order terms}.
\end{array}
$$
\noindent Thus $\widehat{q}_2$ is an isomorphism up to lower order terms which implies that it, and $q_2$, induce isomorphisms on homology.

%These maps provide a homotopy between $\tilde{f}_{i+2}\circ h_{i} +  \tilde{h}_{i+1}\circ f_{i}$ and an isomorphism of complexes $\iota:A_i\rightarrow \tilde{A}_{i}$,     

%  As above, we will be terse, primarily discussing the differences between the exact triangle at hand and the case $n=1$ with $\RR=\F$ which was well-exposed in \cite{OzsvathLectures}. In particular, we defer the necessary degeneration and gluing arguments that reduce pseudo-holomorphic curve analysis in the Heegaard diagram to the indvidual torus summands which contain $\gamma_^0

\end{proof}

\bibliographystyle{plain}
\bibliography{mybib}

\end{document}